\theoremstyle{plain}
\newtheorem{thm}{Theorem}[section]
\newtheorem{cor}[thm]{Corollary}
\newtheorem{lem}[thm]{Lemma}
\newtheorem{cla}[thm]{Claim}
\newtheorem{prop}[thm]{Proposition}
\newtheorem{conj}[thm]{Conjecture}
\newtheorem{fact}[thm]{Fact}
 \newtheorem*{lemmaA}{Lemma A}
 \newtheorem*{lemmaB}{Lemma B}
\theoremstyle{definition}
\newtheorem{rem}[thm]{Remark}
\newtheorem{exa}[thm]{Example}
\newtheorem*{division}{Division into Cases}
\newtheorem{defn-prop}[thm]{Definition-Proposition}
\newcommand{\PP}{\mathbb P}
\newcommand{\Z}{\mathbb Z}
\newcommand{\Q}{\mathbb Q}
\newcommand{\C}{\mathbb C}
\newcommand{\Ker}{\mathop{\mathrm{Ker}}\nolimits}
\newcommand{\Image}{\mathop{\mathrm{Im}}\nolimits}
\newcommand{\Coker}{\mathop{\mathrm{Coker}}\nolimits} 
\newcommand{\id}{\ensuremath{\mathop{\mathrm{id}}}}
 \newcommand{\length}{\operatorname{length}} 
 \newcommand{\Cone}{\operatorname{Cone}} 
\newcommand{\Pic}{\mathop{\mathrm{Pic}}\nolimits}
\newcommand{\Ext}{\mathop{\mathrm{Ext}}\nolimits}
\newcommand{\Hom}{\mathop{\mathrm{Hom}}\nolimits}
\newcommand{\RHom}{\mathop{\mb R\mathrm{Hom}}\nolimits}
\newcommand{\mcRHom}{\mathop{\mb R\mathcal{H}om}\nolimits}
\newcommand{\Lotimes}{\stackrel{\mb L}{\otimes}}
\newcommand{\Spec}{\operatorname{Spec}}
\newcommand{\GL}{\operatorname{GL}}
\newcommand{\SL}{\operatorname{SL}}
\newcommand{\Coh}{\operatorname{Coh}}
\newcommand{\Auteq}{\operatorname{Auteq}}
\newcommand{\module}{\operatorname{mod}}
\newcommand{\Aut}{\operatorname{Aut}} 
\newcommand{\ch}{\operatorname{ch}}
\newcommand{\owe}{\mathcal{O}} 
\newcommand{\mc}{\mathcal}
\newcommand{\mb}{\mathbb}
\newcommand{\Supp}{\ensuremath{\operatorname{Supp}}}
\renewcommand{\labelenumi}{(\roman{enumi})}
\newcommand{\FM}{\ensuremath{\operatorname{FM}}}
\newcommand{\Span}[1]{\left<#1\right>}
\newcommand{\zero}{
\setlength{\unitlength}{1ex}
\begin{picture}(2, 2)(-1, -1)
\put(0,0){\circle{2}}
\put(0,0){\makebox(0,0){\tiny$0$}}
\end{picture}
}
\newcommand{\mone}{
\setlength{\unitlength}{1ex}
\begin{picture}(2, 2)(-1, -1)
\put(0,0){\circle{2}}
\put(0,0){\makebox(0,0){\tiny $-1$}}
\end{picture}
}
\newcommand{\no}{
\setlength{\unitlength}{1ex}
\begin{picture}(2, 2)(-1, -1)
\put(0,0){\circle{2}}
\end{picture}
}
\newcommand{\cia}{\setlength{\unitlength}{1ex}
\begin{picture}(2, 2)(-1, -1)
\put(0,0){\circle{2}}
\put(0,0){\makebox(0,0){\tiny$a$}}
\end{picture}
}
\newcommand{\cib}{\setlength{\unitlength}{1ex}
\begin{picture}(2, 2)(-1, -1)
\put(0,0){\circle{2}}
\put(0,0){\makebox(0,0){\tiny$-2$}}
\end{picture}
}
\title{Autoequivalences of derived categories of elliptic surfaces with non-zero Kodaira dimension}
\author{Hokuto Uehara}
\date{}
\begin{document}
\maketitle
\begin{abstract}
We study the group of autoequivalences
 of the derived categories of coherent sheaves on 
smooth projective elliptic surfaces with non-zero Kodaira dimension.
We find a description of it when each reducible fiber is a cycle of $(-2)$-curves and non-multiple. 
\end{abstract}

\tableofcontents
\section{Introduction}
\subsection{Motivations and results}\label{subsec:motivation}
Let $X$ be a smooth projective variety over $\C$ and $D(X)$ the bounded derived category of coherent sheaves on $X$.
If $X$ and $Y$ are smooth projective varieties with equivalent derived categories, then we call
$X$ and $Y$ \emph{Fourier--Mukai partners}.
We define the set of isomorphism classes of Fourier--Mukai partner of $X$ as
$$
\FM (X):=\{Y\text{ smooth projective varieties }\mid 
D(X)\cong D(Y) \}/\cong.
$$
It is an interesting problem to determines the set 
$\FM (X)$ for a given $X$.
There are several known results in this direction.  
For example, Bondal and Orlov show that if $K_X$ or $-K_X$ is ample, then $X$ can be entirely reconstructed from $D(X)$, namely $\FM (X)=\{X\}$
\cite{BO95}.
To the contrary, there are examples of non-isomorphic varieties $X$ and $Y$ having equivalent derived categories. 
For example, in dimension $2$, if $\FM (X)\ne \{X\}$, then $X$ is a K3 surface, an abelian surface or 
a relatively minimal elliptic surface with non-zero Kodaira dimension (\cite{BM01}, \cite{Ka02}).
In dimension $3$, some results are shown by Toda \cite{To03}. 
Moreover, Orlov gives a complete answer
to this problem for abelian varieties in \cite{Or02}.   
 
It is also natural to study the isomorphism classes of autoequivalences of $D(X)$. The group 
consisting of all exact $\C$-linear autoequivalences of $D(X)$ up to isomorphism is denoted by $$
\Auteq D(X).
$$ 
We note that $\Auteq D(X)$ always contains the group
$$
A(X):=\Pic X \rtimes \Aut X\times \Z[1],
$$
generated by \emph{standard autoequivalences}, namely the functors of tensoring with invertible sheaves, pull backs along automorphisms, 
and the shift functor $[1]$.

When $K_X$ or $-K_X$ is ample, Bondal and Orlov show that 
$\Auteq D(X)\cong A(X)$. 

When $X$ is an abelian variety, Orlov determines the structure of $\Auteq D(X)$ (\cite{Or02}).
As a special case, when $X$ is an elliptic curve, the autoequivalence group is described 
as
\begin{equation*}
1\to \hat{X}\rtimes\Aut X\times \Z[2] \to \Auteq D(X) \stackrel{\theta}\to \SL(2,\Z)\to 1.
\end{equation*}
Here $\theta$ is given by the action of $\Auteq D(X)$
on the even integral cohomology group 
$H^{0}(X,\Z)\oplus H^{2}(X,\Z)$, which is isomorphic to $\Z^2$.
In this case, the group $\Auteq D(X)$ contains the Fourier--Mukai transform $\Phi^{\mc{U}}_{J_X(a,b)\to X}$ 
determined by 
a universal sheaf $\mc{U}$ of the fine moduli space $J_X(a,b)$ of stable vector bundles of  the rank $a$ and the degree $b$ with $(a,b)=1$. By the work of Atiyah (\cite[Theorem 7]{At57}),  
$J_X(a,b)$ is isomorphic to $X$, and hence $\Phi^{\mc{U}}_{J_X(a,b)\to X}$ can be regarded as an autoequivalence of $D(X)$. 
One can check that $\Phi^{\mc{U}}_{J_X(a,b)\to X}$ does not belong to $A(X)$.

For the minimal resolution $X$ of $A_n$-singularities on a surface,
Ishii and the author determine the structure of $\Auteq D(X)$
 (\cite{IU05}). 
It is generated by the group $A(X)$ and twist functors of the form $T_{\mathcal{O}_G(a)}$ (\cite{ST01})
associated with the line bundle $\mathcal{O}_G(a)$ on a $(-2)$-curve $G(\cong \PP^1)$ on $X$. 
Again,  $T_{\mathcal{O}_G(a)}$ does not belong to $A(X)$.

The case of smooth projective elliptic surfaces $\pi\colon S\to C$ with non-zero Kodaira dimension is a mixture of the last two cases.
If $S$ has a reducible fiber, each component of it is a $(-2)$-curve. Hence  
$\Auteq D(S)$ contains twist functors as in the case \cite{IU05}.
On the other hand, let us consider the fine moduli space $J_S(a,b)$ of 
 pure $1$-dimensional stable sheaves on $S$,
the general point of which represents a rank $a$, degree $b$ stable 
vector bundle supported on a smooth fiber of $\pi$. 
It often occurs that 
there is an isomorphism 
$S\cong J_S(a,b)$, and then the 
Fourier--Mukai transform $\Phi^{\mc{U}}_{J_S(a,b)\to S}$ determined by 
 a universal sheaf $\mc{U}$ on $J_S(a,b)\times S$
gives a non-standard autoequivalence.

For an object $E$ of $D(S)$, we define the fiber degree of $E$ 
\[d(E)=c_1(E)\cdot F, \]
where $F$ is a general fiber of $\pi$. Let us denote by $\lambda_{S}$  
the highest common factor of the fiber degrees of objects of $D(S)$. 
It is shown in \cite{Br98} that if $a\lambda_S$ and $b$ are coprime, the above mentioned fine moduli space $J_S(a,b)$ exists. 
We denote $J_S(b):=J_S(1,b)$.

We set
$$
B:=\Span{T_{\mathcal{O}_G(a)} \mid G \text{ is a $(-2)$-curve }}$$
and denote the congruence subgroup of $\SL (2,\Z)$ by
$$
\Gamma_0(m):=\bigl\{ \begin{pmatrix}
c& a\\
d& b   
\end{pmatrix}\in \SL (2,\Z)\bigm|d\in m\Z \bigr\}
$$
for $m\in \Z$.  


\begin{conj}\label{conj}
Let $S$ be a smooth projective elliptic surface $S$ with $\kappa (S)\ne 0$.
Then we have a short exact sequence
\begin{align*}
1\to \Span{B,\otimes \mathcal{O}_S(D)\mid D\cdot F=0, \text{ $F$ is a fiber}}\rtimes \Aut S\times  
\Z[2]
\to
\Auteq D(S)& \notag\\
\stackrel{\Theta}\to 
\bigl\{ \begin{pmatrix}
c& a\\
d& b   
\end{pmatrix}\in \Gamma_0(\lambda_{S}) \bigm| J_S(b)\cong S \bigr\}
&\to 1.
\end{align*}
Here 
$\Theta$ is induced by the action of 
$\Auteq D(S)$ on the 
even degree part $H^0(F,\Z)\oplus H^2(F,\Z)\cong \Z^2$ of integral cohomology groups of  on a smooth fiber $F$.
\end{conj}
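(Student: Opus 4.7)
The plan is to verify the three pieces of exactness separately: (i) that $\Theta$ is well-defined with image inside the displayed subgroup of $\Gamma_0(\lambda_S)$; (ii) that $\Theta$ is surjective onto matrices $M$ with $J_S(b)\cong S$; and (iii) that $\Ker\Theta$ equals the stated subgroup. Steps (i) and (ii) follow the well-established pattern of relative Fourier--Mukai transforms on elliptic fibrations developed in \cite{Br98}; the heart of the argument is step (iii).

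For (i), given $\Phi\in\Auteq D(S)$ and a smooth fiber $F$, one checks that the cohomological action of $\Phi$ preserves the rank-two sublattice $H^0(F,\Z)\oplus H^2(F,\Z)$ and its Mukai pairing, yielding $\Theta(\Phi)\in\SL(2,\Z)$. The containment $\Theta(\Phi)\in\Gamma_0(\lambda_S)$ amounts to the fact that the fiber degree of $\Phi(\mc{O}_x)$ at a closed point $x$ must be a multiple of $\lambda_S$, by the very definition of $\lambda_S$. That $\Theta(\Phi)$ lies in the set $\{M\mid J_S(b)\cong S\}$ follows from examining what $\Phi^{-1}$ does to skyscrapers: it produces sheaves numerically equivalent to rank $a\lambda_S$, degree $b$ stable sheaves on fibers, and $\Phi^{-1}$ identifies $S$ with the corresponding moduli space, so $J_S(b)\cong S$. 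For (ii), when $J_S(b)\cong S$ the Fourier--Mukai transform $\Phi^{\mc{U}}_{J_S(b)\to S}$ realizes a matrix with lower-left entry $\lambda_S$; combining with tensors by line bundles, pullbacks by automorphisms, and $[1]$, and invoking standard generators of $\Gamma_0(\lambda_S)$, one fills out the whole image.

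For (iii), I would normalize $\Phi\in\Ker\Theta$ step by step. Since $\Theta(\Phi)=\id$, the skyscraper class at a generic point is preserved; after composing with some power of $[1]$, a tensor by $\mc{O}_S(D)$ with $D\cdot F=0$, and pullback by some $\sigma\in\Aut S$, I can arrange that $\Phi(\mc{O}_x)\cong\mc{O}_x$ for every $x$ in a smooth fiber. The normalized $\Phi$ then acts trivially on the complement of the reducible fibers (by a pointwise argument), so it is concentrated near each reducible fiber. Since such a fiber is a non-multiple cycle of $(-2)$-curves, it is \'etale-locally modelled by the minimal resolution of an $A_{n-1}$-singularity, and the Ishii--Uehara classification \cite{IU05} identifies the local contribution to $\Phi$ as a composition of twist functors $T_{\mc{O}_G(a)}$ from $B$ and tensors by line bundles supported on the cycle components. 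Reassembling these local contributions gives the claimed presentation of $\Ker\Theta$.

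I expect the main obstacle to be the globalization in (iii): the local descriptions near different reducible fibers must combine coherently into a single element of the global group $\Span{B,\otimes\mc{O}_S(D)\mid D\cdot F=0}\rtimes\Aut S\times\Z[2]$, and one must rule out any exotic ``gluing'' autoequivalence acting non-trivially on several fibers in a coordinated way while remaining invisible on smooth fibers. A secondary technical issue is careful bookkeeping of line bundles numerically trivial on fibers against those supported on fiber components, and verifying the compatibility of the $\Aut S$-action with $B$ to secure the semidirect product structure; the non-multiplicity hypothesis enters essentially here, since it ensures the stability and moduli-space arguments of \cite{Br98} apply to every reducible fiber uniformly.
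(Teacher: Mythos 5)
The statement you are proving is stated in the paper as a \emph{conjecture}; the paper only establishes it under the additional hypothesis that every reducible fiber is non-multiple of type $\mathrm{I}_n$ (Theorem \ref{thm:typeI_n}). Your proof silently imports that hypothesis (``since such a fiber is a non-multiple cycle of $(-2)$-curves''), so at best it addresses the special case. Within that special case, your steps (i) and (ii) track the paper's \S\ref{sec:autoeq_non-zero_Kodaira}: the restriction $\iota_U$ to the complement $U$ of the reducible fibers, the classification of $\Auteq^\dagger D(U)$ in Proposition \ref{prop:noreducible}, and the computation of $\Image\Theta_U$ and $\Ker\Theta_U$ in Theorem \ref{thm:AeV_extension}. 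That part of your outline is sound.

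The genuine gap is in step (iii), at exactly the point the paper spends \S\S\ref{sec:I_n}--\ref{sec:-2} on. After normalization you must show $\Ker\iota_U\subset B$, i.e.\ that an autoequivalence of $D_{Z_0}(S)$ which is trivial away from a cycle $Z_0$ of $(-2)$-curves lies in the group generated by twist functors. You propose to get this by saying $Z_0$ is \'etale-locally an $A_{n-1}$-resolution and invoking \cite{IU05}. This does not work: $D_{Z_0}(S)$ for a \emph{cycle} ($\tilde A_{n-1}$-configuration) is a global object whose autoequivalence group is not controlled by local $A_{n-1}$ models. Concretely, (a) $\Auteq D_{Z_0}(S)$ contains the images under $\iota_Z$ of the relative Fourier--Mukai transforms $\Phi^{\mc{U}}_{J_S(a,b)\to S}\circ\phi^*$, which do not even preserve the class $\ch(\mc{O}_x)$ --- this is why Proposition \ref{proposition:step -2 of A_n} needs that hypothesis, whereas in the $A_n$ case of \cite{IU05} it is automatic; and (b) the rigid torsion-free sheaves occurring as cohomologies of spherical objects on a cycle include the non-locally-free sheaves $p_{k*}\mc{L}$ for coverings $\mb{I}_k\to\tilde{\mb{I}}_n$ of arbitrary length, wrapping around the cycle, which have no $A_n$ analogue and require Proposition \ref{prop:BBDG} and the degree restrictions of Lemmas \ref{lem:degree_restriction_n=2} and \ref{lem:degree_restriction} to control. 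The paper's route is to first prove Propositions \ref{proposition:step -1 of A_n} and \ref{proposition:step -2 of A_n} by an induction on the length invariant $l(\alpha)$, using twist functors to cut one component out of the cycle; only \emph{after} that reduction does one land in a chain $C_2\cup\cdots\cup C_n$ where \cite{IU05} applies. Your sketch skips this entire mechanism, which is the mathematical core of the result. (A minor further point: the non-multiplicity hypothesis is used so that $\bigoplus_p H^p(\alpha)$ is an $\mc{O}_{Z_0}$-module rather than an $\mc{O}_{mZ_0}$-module, cf.\ Remark \ref{rem:multiple}, not for the stability arguments of \cite{Br98} as you suggest.)
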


\begin{rem}
\begin{enumerate}
\item
As Conjecture \ref{conj} implicitly implies,  we  can actually see 
in the proof of Theorem \ref{thm:AeV_extension}
that every element of $\Auteq D(S)$ induces an autoequivalence of a smooth fiber $F$.
If $\kappa(S)= 0$, this is false. See in Example \ref{exa:twist} (iii).
\item 
The quotient group  
$\Gamma_0(\lambda_S)/ \Image \Theta$ 
is naturally identified with the set of Fourier--Mukai partners $\FM (S)$ of $S$.
See Remark \ref{rem:image_Theta}.
\item If $\pi$ has a section, we know that 
$\Image \Theta \cong \SL (2,\Z)$. See Remark \ref{rem:image_Theta}.
\item
By \cite[Proposition 4.18]{IU05}, we have
$$
B\cap\Span{\otimes \mathcal{O}_S(D)\mid D\cdot F=0, \text{ $F$ is a fiber}}
=\Span{\otimes\mathcal{O}_S(G)\mid \text{ $G$ is a $(-2)$-curve}}.$$
\end{enumerate}
\end{rem}

The following is the main result in this article.


\begin{thm}[=Theorem \ref{thm:typeI_n}]\label{thm:main}
Suppose that each reducible fiber  on the elliptic surface $S$ is non-multiple, and forms a cycle of $(-2)$-curves,
 i.e. of type $\mathrm{I}_n$ for some $n>1$. 
Then Conjecture \ref{conj} is true.
\end{thm}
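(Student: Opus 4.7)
The plan is to verify the three ingredients of the proposed short exact sequence: well-definedness of $\Theta$ together with the location of its image, surjectivity of $\Theta$, and identification of $\ker\Theta$. The hypothesis that every reducible fiber is a non-multiple $\mathrm I_n$ enters crucially in the last step, since a formal/analytic neighbourhood of such a fiber is isomorphic to one of the exceptional cycle on the minimal resolution of $\C^2/(\Z/n\Z)$, making the local autoequivalence analysis of \cite{IU05} directly applicable.

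For the image of $\Theta$, I would first use that $\kappa(S)\ne 0$ implies $K_S$ is numerically a non-negative rational multiple of a fiber $F$; combined with the fact that autoequivalences commute with Serre functors, this forces any $\Phi\in\Auteq D(S)$ to preserve the fiber class (up to sign) in the numerical Grothendieck group. Following Bridgeland's analysis of relative moduli on elliptic fibrations, this in turn produces an induced action on $H^0(F,\Z)\oplus H^2(F,\Z)\cong\Z^2$ for a smooth fiber $F$, giving the map $\Theta$ with values in $\SL(2,\Z)$. The restriction to $\Gamma_0(\lambda_S)$ reflects preservation of fiber degrees modulo $\lambda_S$, while the constraint $J_S(b)\cong S$ is automatic since $\Phi$ realises a derived equivalence $D(J_S(b))\cong D(S)$. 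Surjectivity of $\Theta$ onto this set is then obtained by exhibiting explicit preimages: for each candidate $\begin{pmatrix}c&a\\d&b\end{pmatrix}$ with $J_S(b)\cong S$, the relative Fourier--Mukai transform $\Phi^{\mc U}_{J_S(a,b)\to S}$ associated with a universal sheaf maps to the given matrix.

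The main obstacle is the description of $\ker\Theta$. Given $\Phi\in\ker\Theta$, the strategy is to compose $\Phi$ with elements of $\Aut S$, shifts, tensorings by fiber-trivial line bundles, and twist functors in $B$, until it is shown to be the identity. Since $\Theta(\Phi)=I$, the autoequivalence preserves the class of a point on a smooth fiber and that of a line bundle restricted to it, so after these reductions one may assume that $\Phi$ fixes the structure sheaves of points on a generic smooth fiber. The remaining task is to show that $\Phi$ is then a pullback along an automorphism of $S$ (up to shift and a fiber-trivial line-bundle twist). The delicate point is the local-to-global argument at each reducible fiber: locally the configuration of $(-2)$-curves is an $A_{n-1}$-cycle, and by \cite{IU05} the only autoequivalences of this local model are generated by $A(\,\cdot\,)$, line-bundle twists and $T_{\mc O_G(a)}$; one must transport this local statement to a global statement on $S$, verify that the overlaps between neighbourhoods contribute nothing new (using the relation from the remark following Conjecture \ref{conj}), and then deduce that the generic-fiber-preserving part of $\Phi$ lies in the subgroup described by the conjecture. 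This globalisation of the \cite{IU05} classification, combined with the verification of the semidirect and direct product structure of the kernel, is where the bulk of the work lies, and is precisely what fails if one allows multiple fibers or types $\mathrm I_n^*,\mathrm{II}^*,\mathrm{III}^*,\mathrm{IV}^*$.
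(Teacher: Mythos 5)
Your overall architecture (construct $\Theta$, compute its image via relative Fourier--Mukai transforms $\Phi^{\mc U}_{J_S(a,b)\to S}$, then identify the kernel by a local analysis at the reducible fibers) is compatible with the paper, which implements it by restricting to the complement $U$ of the reducible fibers (the homomorphism $\iota_U$ of Proposition \ref{prop:U}), proving Theorem \ref{thm:AeV_extension} for $\Auteq^\dagger D(U)$, and reducing everything to the single equality $B=\Ker\iota_U$. However, your key step contains a genuine error. A fiber of type $\mathrm I_n$ is a \emph{cycle} of $n$ $(-2)$-curves, i.e.\ an $\tilde A_{n-1}$-configuration whose intersection matrix is negative \emph{semi}-definite with the fiber class in its radical. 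It is therefore not isomorphic, in any analytic neighbourhood, to the exceptional locus of the minimal resolution of $\C^2/(\Z/n\Z)$, which is a \emph{chain} ($A_{n-1}$-configuration) with negative definite intersection matrix; a cycle of $(-2)$-curves cannot be contracted to a quotient singularity at all. Consequently the classification of $\Auteq D_Z(X)$ from \cite{IU05} is not ``directly applicable'' to the local model, and the claim that ``the only autoequivalences of this local model are generated by $A(\cdot)$, line-bundle twists and $T_{\mc O_G(a)}$'' is false for a cycle: the group $\Auteq^\dagger D_{Z_0}(S)$ contains the restrictions of the transforms $\Phi^{\mc U}_{J_S(a,b)\to S}\circ\phi^*$, which do not preserve $\ch(\mc O_x)\in H^4(S,\Q)$ and do not lie in the subgroup you describe (see the Remark following Proposition \ref{proposition:step -2 of A_n}). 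This is precisely why Proposition \ref{proposition:step -2 of A_n} carries the extra hypothesis that $\Phi$ preserves $\ch(\mc O_x)$ --- a hypothesis that is automatic in the $A_n$ case and must be supplied here by first passing through $\iota_U$ --- and why \S\S\ref{sec:I_n}--\ref{sec:-2} of the paper (the classification of torsion-free sheaves on $\tilde{\mb I}_n$ via \cite{BBDG}, the rigidity Lemmas \ref{lem:degree_restriction_n=2} and \ref{lem:degree_restriction}, and the adapted Lemmas A and B) constitute the bulk of the work rather than a routine globalisation.

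Two smaller points. First, the assertion that the constraint $J_S(b)\cong S$ on $\Image\Theta$ is ``automatic since $\Phi$ realises a derived equivalence $D(J_S(b))\cong D(S)$'' has the logic backwards: derived equivalence does not imply isomorphism (otherwise $\FM(S)$ would be trivial), and the containment $\Image\Theta\subset\{M\in\Gamma_0(\lambda_S)\mid J_S(b)\cong S\}$ is obtained from the classification of Proposition \ref{prop:noreducible}(iii), which shows that any autoequivalence with three-dimensional kernel support factors through an isomorphism $\phi\colon J_S(a,b)\to S$. Second, your kernel computation must also rule out autoequivalences of $D_{Z_0}(S)$ that permute the components of the cycle or act nontrivially on points; in the paper this is the content of the fiber-degree induction in Propositions \ref{proposition:step -1 of A_n} and \ref{proposition:step -2 of A_n}, and it cannot be replaced by an appeal to the chain case alone.
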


In the case that $\pi$ has only irreducible fibers, and 
$\pi$ has a section, 
Conjecture \ref{conj} is essentially shown in \cite{LST13}.\footnote{They only consider autoequivalences $\Phi$ with 
$\Phi\in\Ker\delta$. See the definition of $\delta$ in Remark \ref{rem:image_delta}.
On the other hand, their result is valid without any restrictions on the base space of the fibration.}

\subsection{Outline of the proof of Theorem \ref{thm:main}}
Let $S$ be a projective elliptic surface with $\kappa(S)\ne 0$, and 
$Z$ be the union of all reducible fibers of the elliptic fibration $\pi\colon S\to C$, and 
$U$ be the complement of $Z$ in $S$. 
We introduce a group homomorphism 
$$
\iota_U\colon \Auteq D(S)\to \Auteq D(U)
$$
in Proposition \ref{prop:U} and denote $\Image \iota_U$ by $\Auteq ^{\dagger} D(U)$.
We classify all elements of $\Auteq ^{\dagger} D(U)$ by Proposition \ref{prop:noreducible} 
and determine the structure of $\Auteq ^{\dagger} D(U)$ in Theorem \ref{thm:AeV_extension}.

Assume furthermore that all reducible fibers of $\pi$ are of type $\mathrm{I}_n$ ($n>1$) as in Theorem \ref{thm:main}.
Then we can show that 
\begin{equation}\label{eq:B=kerU}
B=\Ker\iota_U
\end{equation}
by using
\begin{prop}[=Proposition \ref{proposition:step -2 of A_n}]\label{proposition:step -2 of A_n intro}
Take a connected component $Z_0$ of $Z$, that means a fiber of $\pi$.
Let us consider the irreducible decomposition $Z_0=C_1\cup\cdots\cup C_n$, where each $C_i$ is a $(-2)$-curve. 
Suppose that we are given an autoequivalence $\Phi$ of $D_{Z_0}(S)$ preserving
the cohomology class $[\mc{O}_x] \in H^4(S,\Q)$ for some point $x\in Z_0$.
Then, there are integers $a$, $b$ $(1\le b\le n)$ and $i$,
and there is an autoequivalence 
$$
\Psi\in 
\Span{T_{\mc{O}_G(a)}\mid G \text{ is a $(-2)$-curves contained in $Z_0$}}
$$
such that
$$
\Psi \circ \Phi(\owe_{C_1}) \cong \owe_{C_{b}}(a)[i]
$$
and
$$
\Psi \circ \Phi(\owe_{C_1}(-1)) \cong \owe_{C_{b}}(a-1)[i].
$$
In particular, 
for any point $x\in C_1$, we can find a point $y\in C_b$ 
with $\Psi\circ\Phi (\mc{O}_x)\cong \mc{O}_y[i]$.  
\end{prop}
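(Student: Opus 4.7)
The plan is to exploit the abundance of spherical objects on the cycle: each $\owe_{C_i}(k)$ is spherical in $D_{Z_0}(S)$, and any autoequivalence preserves sphericity. I would show that $\Phi(\owe_{C_1})$ can be reduced to $\owe_{C_b}(a)[i]$ by composing with some $\Psi$ in the twist subgroup, and then transport the conclusion through the short exact sequence
$$
0 \to \owe_{C_1}(-1) \to \owe_{C_1} \to \owe_x \to 0
$$
to obtain the images of $\owe_{C_1}(-1)$ and $\owe_x$.

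First I would pin down the numerical invariants of $\Phi(\owe_{C_1})$. Preservation of $[\owe_x] \in H^4(S,\mb Q)$ together with preservation of the Euler form forces $\ch(\Phi(\owe_{C_1}))$ to be the Chern character of a rank-zero sheaf supported on $Z_0$ with vanishing fiber degree and whose curve-class part is a norm $-2$ element in the (degenerate) lattice spanned by $[C_1],\dots,[C_n]$, i.e.\ a real root of the affine root system $\widetilde{A}_{n-1}$; up to shifting it matches $\ch(\owe_{C_b}(a)[i])$ for suitable integers $a,b,i$ with $1\le b\le n$.

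The central step is a classification of spherical objects in $D_{Z_0}(S)$ modulo the subgroup
$$
\Span{T_{\owe_G(k)} \mid G \text{ is a $(-2)$-curve in } Z_0},
$$
paralleling the arguments of \cite{IU05}: every spherical object of $D_{Z_0}(S)$ is, up to this subgroup and a shift, isomorphic to some $\owe_{C_j}(k)$. Applied to $\Phi(\owe_{C_1})$ this yields the desired $\Psi$ and $\Psi\circ\Phi(\owe_{C_1})\cong\owe_{C_b}(a)[i]$. The same classification applied to $\Psi\circ\Phi(\owe_{C_1}(-1))$, whose Chern character is forced to be $\ch(\owe_{C_b}(a-1)[i])$ by the analogous numerical analysis and for which the constraint $\Hom^{*}(\Psi\circ\Phi(\owe_{C_1}(-1)),\owe_{C_b}(a)[i]) \cong \Hom^{*}(\owe_{C_1}(-1),\owe_{C_1})$ rules out spurious twists, gives $\Psi\circ\Phi(\owe_{C_1}(-1))\cong \owe_{C_b}(a-1)[i]$. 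For the last assertion I would push the defining triangle through $\Psi\circ\Phi$: a nonzero morphism $\owe_{C_1}(-1)\to\owe_{C_1}$ is carried to a nonzero section of $\owe_{\PP^1}(1)$ shifted by $i$, whose cone is the skyscraper $\owe_y[i]$ at its vanishing point $y \in C_b$, giving $\Psi\circ\Phi(\owe_x)\cong \owe_y[i]$ as required.

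The main obstacle I anticipate is the classification of spherical objects on $Z_0$ modulo twists. Because the configuration is a cycle of type $\widetilde{A}_{n-1}$ rather than the $A_n$ chain of \cite{IU05}, the induction that reduces an arbitrary spherical object to some $\owe_{C_j}(k)$ must be carried out globally on the loop (probably on a norm of the curve-class part of $\ch(E)$), and one has to rule out the possibility that this part contains an imaginary-root summand proportional to $\sum [C_i]$ which cannot be decreased by twists at individual components. Once that classification is established, Steps 1 and 3 reduce to bookkeeping.
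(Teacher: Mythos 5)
There is a genuine gap at the point where you pass from $\Psi\circ\Phi(\owe_{C_1})\cong\owe_{C_b}(a)[i]$ to $\Psi\circ\Phi(\owe_{C_1}(-1))\cong\owe_{C_b}(a-1)[i]$. The classification of spherical objects modulo the twist subgroup $B_0$ (Proposition \ref{proposition:step -1 of A_n}) only asserts that \emph{some} element of $B_0$ carries a given spherical object to a line bundle on a single component; it does not identify the object itself. So once $\Psi$ has been chosen to normalize $\alpha=\Phi(\owe_{C_1})$, the object $\beta'=\Psi\circ\Phi(\owe_{C_1}(-1))$ is merely known to be spherical with $c_1(\beta')=[C_b]$ and with $\Hom^{\bullet}(\beta',\owe_{C_b}(a)[i])\cong\C^2$ concentrated in degree $0$. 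You assert that these data ``rule out spurious twists,'' but no argument is given, and this is precisely the hard part: a spherical object with $c_1=[C_b]$ may have cohomology sheaves spread over the whole cycle (with cancelling contributions to $c_1$), and different choices of $\Psi$ normalizing $\alpha$ send $\owe_{C_1}(-1)$ to genuinely different objects, so $\beta'$ cannot be pinned down by its numerical class and its Homs to $\alpha$ without further work. The paper's proof is devoted exactly to this point: it sets up the spectral sequence $E_2^{p,q}=\Ext^p_S(H^{-q}(\beta),\owe_{C_b}(a))\Rightarrow\Hom^{p+q}_{D(S)}(\beta,\alpha)$, extracts from it constraints (Facts \ref{condition:(1)}--\ref{condition:(4)}) on the indecomposable summands of the cohomology sheaves of $\beta$, and then runs an induction on the length invariant $l(\beta)$ through three cases, at each step producing a twist that keeps $l(\Psi(\alpha))=1$ while strictly decreasing $l(\Psi(\beta))$ (Claim \ref{cla:l(beta),l(alpha)}). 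The simultaneous control of the pair $(\alpha,\beta)$ is the content of that claim and cannot be obtained by normalizing the two objects separately.

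Your remaining steps are consistent with the paper: the reduction of $\Phi(\owe_{C_1})$ to $\owe_{C_b}(a)[i]$ is indeed Proposition \ref{proposition:step -1 of A_n}, and your worry about an imaginary-root summand proportional to $\sum_i[C_i]$ is resolved there by the rigidity of the cohomology sheaves of a spherical object, which excludes locally free $\owe_{Z_0}$-modules (Lemma \ref{lem:cohom_spherical}). The final assertion about skyscrapers does follow, as you say, by applying $\Psi\circ\Phi$ to the triangle $\owe_{C_1}(-1)\to\owe_{C_1}\to\owe_x$ once the first two isomorphisms are established.
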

\noindent
Proposition \ref{proposition:step -2 of A_n intro} is proved in
\S \ref{sec:-2} using techniques developed in \cite{IU05}.
Then we can deduce Theorem \ref{thm:main} from the equation \eqref{eq:B=kerU} and the description 
of $\Auteq ^{\dagger} D(U)$ obtained in  Theorem \ref{thm:AeV_extension}.

The construction of this article is as follows.
In \S \ref{sec:preliminaries} we show several preliminary results and give definitions needed afterwards.
In \S \ref{sec:autoeq_non-zero_Kodaira}, we study the structure of $\Auteq ^{\dagger}D(U)$ intensively.
In \S \ref{sec:I_n}, we reduce the proof of Theorem \ref{thm:main}
to showing the equation \eqref{eq:B=kerU}, and furthermore we reduce the proof of \eqref{eq:B=kerU}
to showing Proposition \ref{proposition:step -2 of A_n intro}. In \S \ref{sec:I_n},
we also show several lemmas used in \S \ref{sec:-1}
and \S\ref{sec:-2}.
In \S \ref{sec:-1}, we show Proposition \ref{proposition:step -1 of A_n}, which is the first step 
in the proof of Proposition \ref{proposition:step -2 of A_n intro}.
In \S \ref{sec:-2}, we prove Proposition \ref{proposition:step -2 of A_n intro}.
Finally in \S \ref{sec:examples}, we treat an example of elliptic surfaces satisfying the assumption in Theorem \ref{thm:main}. In the example, we can determine the set $\FM(S)$, and also know when $J_S(b)\cong S$ holds.
This information gives us a better description of $\Image\Theta$ (see Conjecture \ref{conj}) in the example.

\subsection{Notation and conventions}\label{subsec:notation_convention}
All varieties will be defined over $\C$. 
A \emph{point} on a variety will always mean a closed point.
By an \emph{elliptic surface}, we will
always mean a smooth surface $S$ together with a smooth 
curve $C$ and a relatively minimal projective morphism $\pi\colon S\to C$ whose general fiber is an elliptic curve. Here  a \emph{relatively minimal morphism} means a morphism whose fibers contains no $(-1)$-curves.

For two elliptic surfaces 
$\pi\colon S\to C$ and $\pi'\colon S'\to C$,
an isomorphism $\varphi \colon S\to S'$ satisfying 
$\pi=\pi'\circ \varphi$ is called an \emph{isomorphism over} $C$.

For an elliptic curve $E$ and some positive integer $m$, we denote  the set of points of order $m$ by ${}_mE$.  
Furthermore, we denote the dual elliptic curve  
by $\hat{E}:=\Pic ^0 E$. 
 
$D(X)$ denotes the bounded derived category of coherent sheaves on an algebraic variety $X$. For a closed subset $Z$ of $X$, we denote the full subcategory of $D(X)$ consisting of objects supported on $Z$ by $D_Z(X)$.
Here, the support of an object of $D(X)$ is, by definition, the union of the set-theoretic supports of its cohomology sheaves.  

An object $\alpha$ in $D(X)$ is said to be \emph{simple} (respectively \emph{rigid}) if 
$$
\Hom_{D(X)}(\alpha,\alpha)\cong \C \mbox{ (respectively }\Hom^1_{D(X)}(\alpha,\alpha)\cong 0).
$$ 
Given a closed embedding of schemes  $i\colon Z\hookrightarrow X$,
we often denote the derived pull back 
$\mb Li^*\alpha$ simply by $\alpha|_Z$. 

$\Auteq \mathcal{T}$ denotes the group of isomorphism classes of $\C$-linear exact autoequivalences of a $\C$-linear triangulated category $\mathcal{T}$.

\subsection{Acknowledgements} 
The author is supported by the Grants-in-Aid 
for Scientific Research (No.23340011). 
This paper was written during his staying 
at the Max-Planck Institute for Mathematics, 
in the period from April to September 2014.
The author appreciates the hospitality. 
He also thanks Yukinobu Toda for pointing out some mistakes
in the first draft.
  

\section{Preliminaries}\label{sec:preliminaries}

\subsection{General results for Fourier--Mukai transforms}

Let $X$ and $Y$ be smooth projective varieties.
We call $Y$ 
\emph{a Fourier--Mukai partner 
of $X$} 
if $D(X)$ is $\C$-linear triangulated equivalent to $D(Y)$.
We denote by $\FM (X)$ the set of isomorphism classes of Fourier--Mukai partners of $X$.

For an
object $\mathcal{P}\in D(X\times Y)$, we define an exact functor $\Phi^{\mathcal{P}}$, called an \emph{integral functor}, to be
$$
\Phi^{\mathcal{P}}:= 
\mathbb{R}p_{Y*}(\mathcal{P}\Lotimes p^{*}_X(-))\colon D(X)\to D(Y),
$$
where we denote the projections by $p_X\colon X\times Y\to X$ and $p_Y\colon X\times Y\to Y$.
We also sometimes write $\Phi^{\mathcal{P}}$ as $\Phi^{\mathcal{P}}_{X\to Y}$ to emphasize that it is a functor from $D(X)$ to $D(Y)$.

Next suppose that $X$ and $Y$ are not necessarily projective.
Then, in general, $\mathbb{R}p_{Y*}$ is not well-defined as a functor $D(X\times Y)\to D(Y)$ since $p_Y$ is not projective.
Instead, suppose that there are projective morphisms 
$X\to C$ and $Y\to C$ over a smooth variety $C$, and  let $\mathcal{P}$  be  a perfect complex in $D(X\times _CY)$. Then we can also define 
the integral functor in this case, by replacing 
the projections $p_Y$ and  $p_X$ with $p_Y\colon X\times_C Y\to Y$ and  $p_X\colon X\times_C Y\to X$ respectively (note that  we use the same notation for both kinds of projections). 
If we want to emphasize that we are in this situation, 
$\Phi$ is called a \emph{relative integral functor over $C$}. 
Later, we use relative integral transforms 
in the case of elliptic surfaces over a non-projective base $C$.    
 
By the result of Orlov (\cite{Or97}),
for smooth projective varieties $X$ and $Y$,
and for a fully faithful functor $\Phi\colon D(X)\to D(Y)$,
there is an object $\mathcal{P}\in D(X\times Y)$, unique up to isomorphism, such that 
$$
\Phi\cong \Phi^{\mathcal{P}}.
$$
If an integral functor (over $C$) is an equivalence,
it is called a \emph{Fourier--Mukai transform (over $C$)}.

The left adjoint to  an integral functor $\Phi^{\mathcal{P}}$ over $C$ is given 
by the integral functor $\Phi^{\mc{Q}}$ over $C$ where 
$$
\mc{Q}:=\mcRHom _{X\times_C Y}(\mc{P},\mc{O}_{X\times_C Y})\Lotimes p_X^*\omega_{X/C}[\dim X-\dim C]
$$
(see the proof of \cite[Proposition 5.9]{Hu06}). In particular, if  $\Phi^{\mathcal{P}}$ is an equivalence,
its quasi-inverse is given by $\Phi^{\mc{Q}}$.

We can also see that the composition of integral functors over $C$ is again an integral functor over $C$ (cf.~\cite[Proposition 5.10]{Hu06}).


\begin{lem}\label{lem:FMfamily}
Let $\Phi\colon D(X)\to D(Y)$ be a Fourier--Mukai transform over  a smooth variety $C$ between smooth varieties $X, Y$, projective over $C$.
Then the set of points $x\in X$ for which the object
$\Phi (\mathcal{O}_x)$ is a sheaf forms an (possibly empty) open subset of $X$.
\end{lem}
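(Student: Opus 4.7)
The plan is to use upper semicontinuity of cohomology in families of perfect complexes. Smoothness of $X$ makes the skyscraper $\mathcal{O}_x$ a perfect object of $D(X)$, so $\Phi(\mathcal{O}_x)$ is also perfect, and its cohomological amplitude is bounded uniformly in $x$ in terms of the amplitude of the kernel $\mc P\in D(X\times_C Y)$ and $\dim X$. Consequently, the locus in question is the \emph{finite} intersection
\[
U \;=\; \bigcap_{i\neq 0}\{\,x\in X : \mc H^i(\Phi(\mathcal{O}_x)) = 0\,\},
\]
and it suffices to show that, for each $i\neq 0$, the complementary set $W_i := \{x\in X : \mc H^i(\Phi(\mathcal{O}_x)) \neq 0\}$ is closed in $X$.

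To prove $W_i$ closed, I would localize on the base: shrink $C$ to an affine open and fix a line bundle $\mc O_Y(1)$ relatively very ample for $\pi'\colon Y\to C$. Since $\mc H^i(\Phi(\mathcal{O}_x))$ is a coherent sheaf supported on the projective fiber $Y_{\pi(x)}$, the hypercohomology spectral sequence together with Serre vanishing yields
\[
\mc H^i(\Phi(\mathcal{O}_x)) = 0 \;\Longleftrightarrow\; H^i\bigl(Y_{\pi(x)},\, \Phi(\mathcal{O}_x)\otimes\mc O_Y(n)\bigr) = 0 \text{ for all } n\gg 0.
\]
Applying the semicontinuity theorem to the proper morphism $p_X\colon X\times_C Y\to X$ and the perfect complex $\mc P\Lotimes p_Y^*\mc O_Y(n)$, the right-hand dimension is upper semicontinuous in $x$ for each fixed $n$, so its vanishing locus is open. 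Uniformity in $n$ on a neighborhood of any $x_0\notin W_i$ follows from boundedness of Castelnuovo--Mumford regularity on the Noetherian parameter space $X$: a single $n$ large enough detects the vanishing of $\mc H^i$ throughout a whole open neighborhood.

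The main obstacle is exactly this uniformity in $n$, together with the (mild) derived base change needed to identify $\Phi(\mathcal{O}_x)$ with the derived restriction $\mb L i_x^*\mc P$ pushed forward along $Y_{\pi(x)}\hookrightarrow Y$. Smoothness of $X$ is crucial: it guarantees that $\mathcal{O}_x$ has finite Tor-dimension, so all derived operations remain in bounded complexes and the semicontinuity theorem for perfect complexes is directly applicable, yielding that $W_i$ is closed and hence $U$ open.
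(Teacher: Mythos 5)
The paper itself disposes of this lemma by citing \cite[Proposition 2.4]{BM01} (together with the proof of \cite[Lemma 2.5]{BM01}); the argument behind that citation represents the kernel $\mc{P}$ locally by a bounded complex of sheaves flat over $X$ and runs a downward induction on the top nonvanishing cohomology, using that forming the top cohomology commutes with restriction to fibres and that $p_X$ is proper. Your route through Serre twisting is genuinely different, and its first half is sound: the reduction to closedness of each $W_i$, the identification of $\Phi(\mathcal{O}_x)$ with the derived restriction of $\mc{P}$ to the fibre over $x$, the degeneration of the hypercohomology spectral sequence for $n\gg 0$, and the upper semicontinuity of $\dim \mathbb{H}^i$ for each \emph{fixed} $n$ are all fine.

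The gap is the uniformity in $n$, which you correctly single out as the crux but do not actually establish. Boundedness of Castelnuovo--Mumford regularity is a statement about flat (or otherwise bounded) families of sheaves, and the collection $\{\mc{H}^i(\Phi(\mathcal{O}_x))\}_{x\in X}$ is neither: these cohomology sheaves are subquotients of the fibres of a fixed complex, they jump as $x$ varies, and only their alternating sum in $K$-theory is locally constant. Subquotients of a single fixed coherent sheaf do not have bounded regularity in general (ideal sheaves of fat points inside $\mathcal{O}_{\PP^2}$ already have unbounded regularity), so the principle you invoke does not apply off the shelf; and without a single $n_0$ valid on a whole neighbourhood of $x_0$, the implication from $\mathbb{H}^i\bigl(Y_{\pi(x)},\Phi(\mathcal{O}_x)\otimes\mathcal{O}_Y(n_0)\bigr)=0$ back to $\mc{H}^i(\Phi(\mathcal{O}_x))=0$ is exactly what is in question. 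The step can be repaired, for instance by generic flatness and noetherian induction: stratify $X$ into finitely many locally closed pieces over which the cycles and boundaries of a flat resolution of $\mc{P}$ are flat, so that the cohomology sheaves do form flat families stratum by stratum and one $n_0$ works on all of $X$. But as written this is a missing ingredient, and the classical downward-induction proof underlying the paper's citation avoids the issue entirely.
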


\begin{proof}
This is a special case of \cite[Proposition 2.4]{BM01}. See also
the proof of \cite[Lemma 2.5]{BM01}.
\end{proof}

The following is well-known.


\begin{lem}\label{lem:birational}
Let $\Phi\colon D(X)\to D(Y)$ be a Fourier--Mukai transform over a smooth variety $C$ between smooth varieties $X, Y$, projective over $C$.
 Assume that 
it satisfies that $\Phi (\mathcal{O}_x)$ is a shift of a sheaf 
supported  on a finite subset of $Y$ 
for all points $x\in X$.
Then we have 
$$\Phi \cong \phi _*\circ((-)\otimes \mathcal{L})[n]$$  
for a line bundle $\mathcal{L}$ on $X$,
an isomorphism $\phi\colon X\to Y$ over $C$ and some integer $n$.
\end{lem}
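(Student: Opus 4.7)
The plan is to work directly with the kernel. Let $\mc{P} \in D(X\times_C Y)$ denote the integral kernel of $\Phi$, so that $\Phi(-) = \mb Rp_{Y*}(\mc{P}\Lotimes p_X^*(-))$, and write $\Phi(\mc{O}_x) = F_x[n(x)]$ with $F_x$ a coherent sheaf of zero-dimensional support. First I would show that the shift $n(x)$ is locally constant. Applying Lemma \ref{lem:FMfamily} to $\Phi[m]$, the locus $Z_m := \{x\in X \mid \Phi(\mc{O}_x)[m] \text{ is a sheaf}\}$ is open in $X$; since the hypothesis yields $X = \bigsqcup_m Z_m$ (genuinely disjoint, as the shift is uniquely determined when it exists), each $Z_m$ is clopen, so on every connected component $n(x)=n$ is constant. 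After replacing $\Phi$ by $\Phi[-n]$ I may assume $\Phi(\mc{O}_x)$ is a sheaf for all $x$, which by base change forces the perfect complex $\mc{P}$ itself to be a sheaf, flat over $X$.

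Next, each $F_x$ is simple (since $\Phi$ is fully faithful and $\mc{O}_x$ is simple), so being supported on a finite set it must be supported at a single point $\phi(x)\in Y$. I would then upgrade this to $F_x \cong \mc{O}_{\phi(x)}$ by a short local argument: regarding $F_x$ as a finite-length module over $\mc{O}_{Y,\phi(x)}$ with $\End_{\mc{O}_Y}(F_x) = \C$, if $\mk m_{\phi(x)} F_x \neq 0$ then multiplication by any $r\in \mk m_{\phi(x)}$ with $rF_x\neq 0$ is a nonzero nilpotent $\mc{O}_Y$-endomorphism of $F_x$, contradicting $\End_{\mc{O}_Y}(F_x)=\C$. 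Hence $\mk m_{\phi(x)} F_x = 0$, and $F_x$ is a one-dimensional $\C$-vector space supported at $\phi(x)$.

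With this the scheme-theoretic support $\Gamma \subset X\times_C Y$ of $\mc{P}$ is finite over $X$ with reduced single-point fibers, so $\Gamma\to X$ is an isomorphism, producing a morphism $\phi\colon X\to Y$ (automatically over $C$, since $\Gamma\subseteq X\times_C Y$) whose graph is $\Gamma$. The $X$-flat sheaf $\mc{P}$ on $\Gamma \cong X$ with one-dimensional fibers is then a line bundle $\mc{L}$, giving $\mc{P} \cong (\id_X,\phi)_*\mc{L}$, equivalently $\Phi \cong \phi_* \circ ((-)\otimes\mc{L})$. Finally, $\phi_*\colon D(X)\to D(Y)$ is an equivalence (being the composition of the equivalences $\Phi$ and $(-)\otimes \mc{L}^{-1}$), which forces $\phi$ to be an isomorphism. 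The principal technical obstacle is the length-one calculation in the second paragraph, i.e.\ ruling out longer modules $F_x$; together with the flatness conclusion in the first paragraph, this is standard Fourier--Mukai machinery (cf.~\cite[\S 5]{Hu06}).
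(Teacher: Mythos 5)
Your argument is correct and is essentially the paper's proof with the citations unpacked: the paper invokes \cite[Lemma 4.5]{Hu06} to get $\Phi(\mc{O}_x)\cong\mc{O}_y[n]$ (your length-one computation via $\End(F_x)=\C$ is exactly the sheaf case of that lemma) and then \cite[\S 3.3]{BM98} or \cite[Corollary 5.23]{Hu06} for the passage from ``points go to points'' to $\phi_*\circ((-)\otimes\mc{L})[n]$, which is your graph/flatness argument. The only difference is cosmetic: you carry out the kernel analysis directly in the relative setting over $C$, whereas the paper cites the absolute statements and uses Lemma \ref{lem:FMfamily} for the constancy of the shift, as you do.
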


\begin{proof}
By the assumptions,  $\Phi(\mathcal{O}_x)$ 
satisfies the condition of \cite[Lemma 4.5]{Hu06}.
Hence, 
$\Phi(\mathcal{O}_x)\cong \mathcal{O}_y[n]$ for some $y\in Y$
and $n\in \Z$.
Note that the integer $n$ does not depend on the choice of a point $x$ by Lemma \ref{lem:FMfamily}.
Then apply  \cite[\S3.3]{BM98} (or \cite[Corollary 5.23]{Hu06}) to get the conclusion.
\end{proof}

The following Lemma is useful.


\begin{lem}\label{lem:BM02}
Let $X$ be a smooth variety.
For an object $E\in D(X)$ with a compact support,
$\RHom _{D(X)} (E,\mathcal{O}_x)\ne 0$ if and only if 
a point $x$ is contained in $\Supp E$.
Moreover, these conditions are
 also equivalent to
$\RHom _{D(X)} (\mathcal{O}_x,E)\ne 0$.
\end{lem}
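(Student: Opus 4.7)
The plan is to establish the three-way equivalence by a standard hypercohomology spectral sequence argument, treating the trivial direction first and then dealing with the two non-vanishing statements separately with the same choice of extremal cohomology sheaf.

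The trivial direction is immediate: if $x \notin \Supp E$, then $x \notin \Supp \mathcal{H}^i(E)$ for every $i$, so $E$ restricts to zero on some open neighbourhood $U$ of $x$. Since $\mathcal{O}_x$ also lives on $U$, both $\RHom(E,\mathcal{O}_x)$ and $\RHom(\mathcal{O}_x,E)$ can be computed after restriction to $U$, and both vanish. For the converse, assume $x \in \Supp E$ and let $i_0$ be the largest integer with $x \in \Supp \mathcal{H}^{i_0}(E)$; this exists because $E$ is bounded. To show $\RHom(E,\mathcal{O}_x) \ne 0$, I would use the spectral sequence
\[ E_2^{p,q} = \Ext^p(\mathcal{H}^{-q}(E),\mathcal{O}_x) \Rightarrow \Ext^{p+q}(E,\mathcal{O}_x). \]
The entry $E_2^{0,-i_0} = \Hom(\mathcal{H}^{i_0}(E),\mathcal{O}_x)$ is non-zero because the stalk $\mathcal{H}^{i_0}(E)_x$ is a finitely generated, non-zero $\mathcal{O}_{X,x}$-module and hence admits a non-zero quotient to the residue field by Nakayama. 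Outgoing differentials $d_r$ land in $\Ext^r(\mathcal{H}^{i_0+r-1}(E),\mathcal{O}_x)$; since $\Ext^\bullet(-,\mathcal{O}_x)$ depends only on the stalk at $x$ and $\mathcal{H}^{i_0+r-1}(E)_x = 0$ for $r \ge 2$ by maximality of $i_0$, these vanish. Incoming differentials start from $\Ext^{-r}(\ldots,\mathcal{O}_x) = 0$. Hence $E_2^{0,-i_0}$ survives to $E_\infty$ and contributes non-trivially to $\Ext^{-i_0}(E,\mathcal{O}_x)$.

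For $\RHom(\mathcal{O}_x,E) \ne 0$, I would run the dual spectral sequence
\[ E_2^{p,q} = \Ext^p(\mathcal{O}_x,\mathcal{H}^q(E)) \Rightarrow \Ext^{p+q}(\mathcal{O}_x,E) \]
with the same $i_0$ and focus on the entry $E_2^{n,i_0}$, where $n = \dim X$. The Koszul resolution of $\mathcal{O}_x$ on the smooth $n$-fold $X$ gives a canonical isomorphism $\Ext^n(\mathcal{O}_x,\mathcal{F}) \cong \mathcal{F}_x/\mathfrak{m}_x\mathcal{F}_x$ (up to a twist by the one-dimensional space $\wedge^n T_xX$) for any coherent $\mathcal{F}$, which is non-zero precisely when $x \in \Supp \mathcal{F}$, again by Nakayama. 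Outgoing differentials land in $\Ext^{n+r}(\mathcal{O}_x,-) = 0$ because $\mathcal{O}_x$ has projective dimension $n$ by smoothness. Incoming differentials originate from $\Ext^{n-r}(\mathcal{O}_x,\mathcal{H}^{i_0+r-1}(E))$ which vanish for $r \ge 2$ since $\mathcal{H}^{i_0+r-1}(E)_x = 0$ by maximality of $i_0$. Hence $E_2^{n,i_0}$ survives to $E_\infty$ and contributes non-trivially to $\Ext^{n+i_0}(\mathcal{O}_x,E)$.

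The main obstacle is the careful book-keeping of the differentials in the two spectral sequences, in particular ensuring that the non-zero extremal term is neither killed by outgoing nor hit by incoming differentials; once the maximal index $i_0$ is chosen and the fact that $\Ext^\bullet(-,\mathcal{O}_x)$ and $\Ext^\bullet(\mathcal{O}_x,-)$ depend only on stalks at $x$ is invoked, everything is forced. An alternative and more structural route would be to deduce the equivalence of the two non-vanishings at once from local Serre duality on the smooth variety $X$, but the direct spectral-sequence argument above works without any properness assumption on $X$, which is the relevant setting since the lemma is stated for $X$ smooth with $E$ compactly supported.
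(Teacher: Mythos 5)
Your proof is correct, but it takes a genuinely different route from the paper. The paper disposes of the lemma in two lines: the equivalence between $x\in\Supp E$ and $\RHom(E,\mathcal{O}_x)\ne 0$ is quoted directly from \cite[Lemma 5.3]{BM02}, and the third condition is then reduced to the first by Grothendieck--Serre duality, $\RHom(\mathcal{O}_x,E)\cong \RHom(E,\mathcal{O}_x\otimes\omega_X[\dim X])^\vee$, which is where the compact-support hypothesis is actually used. You instead prove both non-vanishing statements from scratch with the two hypercohomology spectral sequences, and your book-keeping is right: in each case the extremal term attached to the maximal $i_0$ with $x\in\Supp H^{i_0}(E)$ is non-zero by Nakayama (for the second sequence via the Koszul computation $\Ext^n(\mathcal{O}_x,\mathcal{F})\cong\mathcal{F}_x/\mathfrak{m}_x\mathcal{F}_x$ up to a twist), the outgoing differentials die either because the target row vanishes near $x$ or because $\mathcal{O}_x$ has projective dimension $n$, and the incoming ones die by negativity of the column or by maximality of $i_0$. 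What your approach buys is self-containedness and, as you note, independence from any properness of the support (only coherence and boundedness of $E$ are used); what the paper's approach buys is brevity and the structural point that the two non-vanishings are literally dual to one another. Essentially your argument reproves the cited Bridgeland--Maciocia lemma rather than invoking it.
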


\begin{proof}
The first statement is just \cite[Lemma 5.3]{BM02}.
The second follows from the Grothendieck--Serre duality and the first.
\end{proof}


\begin{lem}\label{lem:Z}
Let $X$ and $Y$ be smooth projective varieties together with 
closed subsets $Z \subset X$ and $W\subset Y$. 
Suppose that a Fourier--Mukai transform  
$\Phi =\Phi_{X\to Y}$ 
and its quasi-inverse $\Psi$ satisfy that 
$$
\Supp \Phi (\mc{O}_x)\subset W \text{ and } 
\Supp \Psi (\mc{O}_{y})\subset Z.
$$
for any point $x\in Z,y\in W$
Then  
$\Phi$ restricts to a Fourier--Mukai transform from $D_Z(X)$ to $D_{W}(Y)$. 
\end{lem}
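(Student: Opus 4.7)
The plan is to show the two inclusions $\Phi(D_Z(X))\subset D_W(Y)$ and $\Psi(D_W(Y))\subset D_Z(X)$; once these are established, the restricted functors are still quasi-inverse to each other (their compositions are isomorphic to the identity on all of $D(X)$ and $D(Y)$), giving the desired equivalence $D_Z(X)\cong D_W(Y)$ which is again of integral-transform type with the same kernel $\mathcal{P}$.

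By symmetry it is enough to prove the first inclusion. Let $E\in D_Z(X)$ and suppose, for contradiction, there is a point $y\in \Supp\Phi(E)\setminus W$. Since $E$ has compact support, so does $\Phi(E)$, so Lemma \ref{lem:BM02} gives $\RHom(\Phi(E),\mathcal{O}_y)\neq 0$. As $\Psi$ is in particular a right adjoint to $\Phi$, adjunction yields
\[
\RHom(E,\Psi(\mathcal{O}_y))\neq 0.
\]

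Next I would invoke the standard local-to-global fact that for bounded complexes $A,B\in D(X)$ with $\Supp A\cap\Supp B=\emptyset$, every stalk of $\mcRHom(A,B)$ vanishes and hence $\RHom(A,B)=0$. Applying this contrapositively, there exists a point
\[
x\in \Supp E\cap \Supp\Psi(\mathcal{O}_y),
\]
and in particular $x\in Z$ since $\Supp E\subset Z$. Now reverse the roles: from $x\in \Supp \Psi(\mathcal{O}_y)$ and Lemma \ref{lem:BM02} we get $\RHom(\mathcal{O}_x,\Psi(\mathcal{O}_y))\neq 0$, which adjunction turns into $\RHom(\Phi(\mathcal{O}_x),\mathcal{O}_y)\neq 0$. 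Applying Lemma \ref{lem:BM02} once more, $y\in \Supp \Phi(\mathcal{O}_x)$. Because $x\in Z$, the hypothesis forces $\Supp \Phi(\mathcal{O}_x)\subset W$, so $y\in W$, contradicting our assumption. Hence $\Supp\Phi(E)\subset W$, i.e.\ $\Phi(E)\in D_W(Y)$.

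Swapping the roles of $(\Phi,Z,W)$ and $(\Psi,W,Z)$ the identical argument gives $\Psi(D_W(Y))\subset D_Z(X)$, and combining the two inclusions yields a Fourier--Mukai transform $D_Z(X)\to D_W(Y)$. I do not expect a serious obstacle: the argument is a clean ``ping-pong'' between $\Phi$ and $\Psi$ using Lemma \ref{lem:BM02} at both ends, with the only delicate point being the disjoint-support vanishing of $\RHom$, which is immediate from the corresponding stalkwise vanishing of $\mcRHom$.
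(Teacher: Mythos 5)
Your proof is correct and uses exactly the same ingredients as the paper's: Lemma \ref{lem:BM02}, the adjunction between $\Phi$ and its quasi-inverse $\Psi$, and the vanishing of $\RHom$ for disjointly supported complexes (which the paper cites as \cite[Lemma 3.9]{Hu06}). The only difference is presentational — you run the ``ping-pong'' as a proof by contradiction starting from a general $E\in D_Z(X)$, whereas the paper first establishes $\Supp\Psi(\mc{O}_y)\cap Z=\emptyset$ for all $y\notin W$ and then concludes directly — so this is essentially the same argument.
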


\begin{proof}
We repeatedly use Lemma \ref{lem:BM02}.
Take a point $x\in Z$ and $y\in Y\backslash W$. Then
$$
\RHom _{D(X)}(\Psi (\mathcal{O}_{y}),\mathcal{O}_x)
=\RHom _{D(Y)}(\mathcal{O}_{y},\Phi(\mathcal{O}_x))
$$
vanishes by the assumption.
This means that  
$\Supp\Psi(\mathcal{O}_{y})\cap Z=\emptyset$, and
thus for an object $E\in D_Z(X)$, we have
$$
\RHom _{D(Y)}(\Phi(E),\mathcal{O}_y)=
\RHom _{D(X)}(E,\Psi(\mathcal{O}_y))
=0,
$$
which implies that $\Phi(E)\in D_{W}(Y)$.
Here, the last equality follows from \cite[Lemma 3.9]{Hu06}.
In a similar way, we can prove that 
$\Psi(F)\in D_Z(S)$ for any objects $F\in D_{W}(Y)$.
Therefore, we obtain the conclusion.
\end{proof}

The following is also well-known.


\begin{lem}[cf.~Proposition 2.15 in \cite{HLS09}]\label{lem:base_change}
Let $\pi\colon X\to C$ and $\pi'\colon Y\to C$ be flat projective morphisms between smooth varieties, 
and take a  point $c$ of $C$.
Suppose that $X_c$ and $Y_c$ are the fibers of $\pi$ and $\pi'$ respectively over the point $c$,\footnote{Although we do not assume that $X_c$ and $Y_c$ are smooth, the perfectness of $\mc{P}|_{X_c\times Y_c}$ assures that $\Psi$ defines a functor from $D(X_c)$ to $D(Y_c)$.} and that
we are given an integral functor  
$\Phi =\Phi_{X\to Y}^{\mc{P}}$ over $C$. 

Let us consider the integral functor 
$\Psi =\Phi_{X_c\to Y_c}^{\mc{P}|_{X_c\times Y_c}}$, and  
denote  the inclusions by  $k\colon X_c\hookrightarrow  X$ and 
$k'\colon Y_c\hookrightarrow  Y$.
Then we have the following.
\begin{enumerate}
\item
$\Phi$ and $\Psi$ satisfies
$
k'_*\circ  \Psi \cong  \Phi \circ k_*.
$
\item
Assume furthermore that $\Phi$ is an Fourier--Mukai transform.
Then so is $\Psi$.
\end{enumerate}
\end{lem}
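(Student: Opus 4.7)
For part (i), the plan is a direct flat base change. Consider the Cartesian square
\[
\xymatrix{
X_c\times Y_c \ar[r]^-{j}\ar[d]_{q_X} & X\times_C Y \ar[d]^{p_X}\\
X_c \ar[r]^-{k} & X
}
\]
and observe that $p_X$ is flat, being the pullback of the flat morphism $\pi'\colon Y\to C$ along $\pi\colon X\to C$. Hence the square is tor-independent, and flat base change gives $\mb Lp_X^*\mb Rk_*\alpha\cong \mb Rj_*\mb Lq_X^*\alpha$ for any $\alpha\in D(X_c)$. Combining this with the projection formula
\[
\mc{P}\Lotimes \mb Rj_*\mb Lq_X^*\alpha \cong \mb Rj_*\bigl(\mc{P}|_{X_c\times Y_c}\Lotimes \mb Lq_X^*\alpha\bigr),
\]
and the identity $p_Y\circ j=k'\circ q_Y$, a direct computation yields
\[
\Phi(k_*\alpha)=\mb Rp_{Y*}\bigl(\mc{P}\Lotimes \mb Lp_X^*k_*\alpha\bigr)\cong k'_*\,\Psi(\alpha),
\]
which is the intertwining relation of (i).

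For part (ii), the key input is the formula for the left adjoint recalled just before Lemma \ref{lem:FMfamily}: the quasi-inverse $\Phi^{-1}$ is itself a relative integral functor over $C$, with perfect kernel $\mc{Q}=\mcRHom(\mc{P},\mc{O})\Lotimes p_X^*\omega_{X/C}[\dim X-\dim C]$. Restricting $\mc{Q}$ to $X_c\times Y_c$ gives a candidate quasi-inverse $\Psi^\vee\colon D(Y_c)\to D(X_c)$. Applying (i) to both $\Phi$ and $\Phi^{-1}$ and composing, one obtains, for $\alpha\in D(X_c)$,
\[
k_*\bigl(\Psi^\vee\Psi(\alpha)\bigr)\cong \Phi^{-1}\bigl(k'_*\Psi(\alpha)\bigr)\cong \Phi^{-1}\Phi(k_*\alpha)\cong k_*\alpha,
\]
and symmetrically $k'_*\bigl(\Psi\Psi^\vee(\beta)\bigr)\cong k'_*\beta$ for $\beta\in D(Y_c)$.

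To upgrade these object-level isomorphisms to natural isomorphisms $\Psi^\vee\circ\Psi\cong \id_{D(X_c)}$ and $\Psi\circ\Psi^\vee\cong \id_{D(Y_c)}$, I would argue at the level of kernels. The composition of two relative integral functors over $C$ is again a relative integral functor whose kernel is the convolution of the kernels over $X\times_C Y\times_C X$ (or the analogous triple product). Since $\Phi^{-1}\circ\Phi\cong\id_{D(X)}$, the convolution of $\mc{P}$ and $\mc{Q}$ is isomorphic to the structure sheaf $\mc{O}_{\Delta_{X/C}}$ of the relative diagonal, whose restriction to $X_c\times X_c$ is $\mc{O}_{\Delta_{X_c}}$, the kernel of $\id_{D(X_c)}$. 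A tor-independence argument, which uses the flatness of both $\pi$ and $\pi'$ over $C$, shows that restriction to the fiber commutes with convolution, so the kernel of $\Psi^\vee\circ\Psi$ is isomorphic to $\mc{O}_{\Delta_{X_c}}$, and hence $\Psi^\vee\circ\Psi\cong \id_{D(X_c)}$; the other composition is treated analogously.

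The main technical obstacle is the tor-independence needed to exchange kernel convolution with fiber restriction; this requires verifying that the scheme-theoretic fiber over $c$ of the triple fiber product $X\times_C Y\times_C X$ agrees with $X_c\times Y_c\times X_c$ without derived corrections, which is where the flatness of $\pi,\pi'$ is essential. The whole argument is the content of \cite[Proposition~2.15]{HLS09}, to which one can simply appeal.
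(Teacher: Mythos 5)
Your proof is correct and follows essentially the same route as the paper: part (i) by flat base change plus the projection formula, and part (ii) by reducing to the observation that the identity kernel restricts to the identity kernel on the fiber and applying this to the composition $\Phi\circ\Phi^{-1}$, using that composition of relative integral functors commutes with restriction to the fiber. The paper's version is merely terser, leaving the tor-independence you flag implicit.
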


\begin{proof}
Assertion (i) directly follows from the projection formula and the flat base change formula (cf.~\cite[Pages 83, 85]{Hu06}).
For (ii), suppose that $X=Y$ and $\Phi\cong\id_X$. Then obviously $\Psi \cong\id_{X_c}$ also holds.
Apply this argument to the functor 
$\Phi\circ \Phi^{-1}$ to get the result.
\end{proof}


\subsection{The Euler form on surfaces}\label{subsec:euler_form}
Let $X$ be a smooth quasi-projective variety. For objects $\alpha,\beta\in D(X)$ with compact supports, we define the Euler form as 
$$
\chi (\alpha,\beta):=\sum_i (-1)^i\dim\Hom^i_{D(X)}(\alpha,\beta).
$$

In the surface case, the Riemann-Roch theorem yields
\begin{align*}
\chi(\alpha,\beta)&=r(\alpha)\ch_2(\beta)-c_1(\alpha)\cdot c_1(\beta)+r(\beta)\ch_2(\alpha)\\
                         &+\frac{1}{2}(r(\beta)c_1(\alpha)-r(\alpha)c_1(\beta))\cdot c_1(\omega_X)+r(\alpha)r(\beta)\chi(\mc{O}_X).
\end{align*}
In particular, if $r(\alpha)=r(\beta)=0$, we have
\begin{equation}\label{eqn:euler}
\chi(\alpha,\beta)=-c_1(\alpha)\cdot c_1(\beta).
\end{equation}
As its application, for a $(-2)$-curve $G$ on a smooth surface $X$,
we can compute 
\begin{align}\label{ali:ext^1}
\dim\Ext_X^1(\mc{O}_G(a),\mc{O}_G(b))=
\begin{cases}
b-a-1 \quad&\mbox{ if } b-a>1\\
0               &\mbox{ if } |b-a|\le 1\\
a-b-1 &\mbox{ if } a-b>1.
\end{cases}
\end{align}

\subsection{Twist functors}\label{subsec:twist_functor}

We introduce an important class of examples of autoequivalences. 

\begin{defn-prop}[\cite{ST01}]\label{def-prop:twist_functor}
Let $X$ be a smooth variety, or rather a complex manifold.
\begin{enumerate}
\item
We say that an object $\alpha \in D(X)$ with a compact support is \emph{spherical} if 
we have $\alpha \otimes \omega_{X} \cong \alpha$ and
$$
\Hom^{k}_{D(X)}(\alpha,\alpha)\cong\begin{cases}  0 & k\ne 0,\dim X\\
                                                \C & k=0,\dim X. 
\end{cases}
$$
\item
Let $\alpha\in D(X)$ be a spherical object. 
We consider the mapping cone 
$$
\mc{C}=\Cone(\pi_1^*\alpha^\vee\Lotimes \pi_2^*\alpha \to \mc{O}_{\Delta})
$$
of the natural evaluation $\pi_1^*\alpha^\vee\Lotimes \pi_2^*\alpha \to \mc{O}_{\Delta}$,
where $\Delta\subset X\times X$ is the diagonal, and $\pi_i$ is the projection of
$X\times X$ to the $i$-th factor. Then the integral functor 
$T_{\alpha}:=\Phi^{\mc{C}}_{X\to X}$ defines an autoequivalence of the bounded derived category $D_{\Coh (X)}(\mc{O}_X-\module)$
of $\mc{O}_X$-modules with coherent cohomology on $X$,\footnote{For an algebraic variety $X$, or a compact complex analytic space $X$ of dimension $2$, it is known that 
 $D_{\Coh (X)}(\mc{O}_X-\module)$ is equivalent to
 $D(X)$. See \cite[Corollary 3.4, Proposition 3.5]{Hu06} and \cite[Corollary 5.2.2]{BV03}.} 
called the \emph{twist functor} along the spherical object $\alpha$. 
\end{enumerate}
\end{defn-prop}

\begin{rem}\label{rem:twist}
Let  $\alpha\in D(X)$ be a spherical object.  Then, by the definition, 
for every $\beta\in D(X)$, we have an exact triangle
\begin{equation}\label{eqn:twist_triangle}
\RHom_{D(X)} (\alpha, \beta)\otimes_{\C}\alpha\to \beta\to T_{\alpha}(\beta). 
\end{equation}
Suppose that $\Supp\beta\cap \Supp\alpha=\emptyset$. 
Then since $\RHom (\alpha, \beta)=0$, we have $T_{\alpha}(\beta)\cong \beta$. We use this remark later.
Furthermore, in the Grothendieck group $K(X)$, we have
\begin{equation}\label{eqn:twist_Grothendieck}
[T_{\alpha}(\beta)]=[\beta]-\chi(\alpha, \beta)[\alpha] .
\end{equation}
\end{rem}

\begin{exa}\label{exa:twist}
\begin{enumerate}
\item
Let $S$ be a smooth surface,
and let $G$ be a $(-2)$-curve.
Then, for every integer $a$ and any point $x\in S$,  
we can see 
$$
\chi(\mc{O}_G(a),\mc{O}_G(a))=2 \mbox{ and }
\chi(\mc{O}_G(a),\mc{O}_x)=0
$$
by the equality \eqref{eqn:euler}.  
Hence, the object $\mc{O}_G(a)\in D(S)$ is spherical, and
it follows from the equality \eqref{eqn:twist_Grothendieck} that 
$[T_{\mc{O}_G(a)}(\mc{O}_x)]=[\mc{O}_x]$. 

By using \eqref{eqn:twist_triangle}, 
we can also see that 
$$
H^{-1}(T_{\mc{O}_G}(\mc{O}_G(2)))=\mc{O}_G(-1)^{\oplus 2} \mbox{ and } 
H^{0}(T_{\mc{O}_G}(\mc{O}_G(2)))=\mc{O}_G,
$$
and hence $T_{\mc{O}_G}\notin A(S)$.
\item
Let $\alpha$ be a simple coherent sheaf on an elliptic curve $E$, for example a line bundle or
the structure sheaf $\mathcal{O}_x$ of a point $x\in E$. Then, $\alpha$ is spherical. 
Usually twist functors are not standard autoequivalences, but 
we can see that in this case  
$T_{\mathcal{O}_x}\cong\otimes \mathcal{O}_E(x)\in A(E)$ (cf.~\cite[Example 8.10]{Hu06}).
\item
Let $X$ be a K3 surface. Then, the structure sheaf $\mc{O}_X$ of $X$ is spherical.
We can see that $\Supp T_{\mc{O}_X}(\mc{O}_x)=2$ by using the triangle \eqref{eqn:twist_triangle}. 
To the contrary,
we will see in Claim \ref{cla:FM_support} that, for an elliptic surface $S$ with non-zero Kodaira dimension,
any autoequivelnce $\Phi\in\Auteq D(S)$ satisfies $\Supp \Phi(\mc{O}_x)\le 1$.   
\end{enumerate}
\end{exa}


\begin{thm}[Theorem 1.3 in \cite{IU05}, Appendix A in \cite{IUU10}]\label{thm:IU05}
Let $X$ be a minimal resolution of the $A_n$-singularity $\Spec \C[[x, y, z]]/(x^2+y^2+z^{n+1})$.
Define 
$$
B:=\Span{T_{\mathcal{O}_G(a)} \mid G \text{ is a $(-2)$-curve }}$$
and denote by $Z$ the exceptional set of the resolution.
Then we have
$$
\Auteq D_Z(X)=(\Span{B, \Pic X} \rtimes \Aut X) \times \Z.
$$
\end{thm}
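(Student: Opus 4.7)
The plan is to prove the equality by establishing the two inclusions. The inclusion $\supseteq$ is routine: line-bundle tensors preserve supports and hence $D_Z(X)$; any $\phi\in\Aut X$ must permute the $(-2)$-curves of $Z$ (as these are precisely the exceptional curves of the contraction to the singular model), so $\phi_*$ preserves $D_Z(X)$; and for every $(-2)$-curve $G\subset Z$, the defining triangle of Remark~\ref{rem:twist} shows $T_{\mc{O}_G(a)}$ preserves $D_Z(X)$. The semi-direct product structure follows from the conjugation identity $\phi_*\circ T_{\mc{O}_G(a)}\circ\phi_*^{-1}\cong T_{\mc{O}_{\phi(G)}(a)}$, and the direct factor $\Z$ from the shift commuting with every other generator.

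For $\subseteq$, given $\Phi\in\Auteq D_Z(X)$, the plan is to normalize $\Phi$ by composing with elements on the right until only a shift remains. Write $Z=C_1\cup\cdots\cup C_n$ as the chain of $(-2)$-curves. I would first study $\Phi(\mc{O}_{C_1})$: since $\omega_X\cong\mc{O}_X$, the sheaf $\mc{O}_{C_1}$ is spherical (Example~\ref{exa:twist} (i) applies in the Calabi--Yau setting), and so is $\Phi(\mc{O}_{C_1})$. The key technical input, analogous to Proposition~\ref{proposition:step -2 of A_n intro}, is a classification of spherical objects in $D_Z(X)$ modulo the action of $B$: every such object is, up to composition with some $\Psi\in B$, a shift of $\mc{O}_G(a)$ for some $(-2)$-curve $G\subset Z$ and some $a\in\Z$. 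Granting this, I can find $\Psi_1\in B$, $\phi_1\in\Aut X$, $\mc{L}_1\in\Pic X$ and $m\in\Z$ so that the composition $[-m]\circ(\phi_1)_*\circ((-)\otimes\mc{L}_1)\circ\Psi_1\circ\Phi$ fixes both $\mc{O}_{C_1}$ and $\mc{O}_{C_1}(-1)$.

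Next, I would propagate along the chain inductively. Assuming the currently normalized $\Phi$ fixes $\mc{O}_{C_j}$ and $\mc{O}_{C_j}(-1)$ for $j<i$, the Ext constraints \eqref{ali:ext^1} applied to $\Hom^{\bullet}(\mc{O}_{C_j},\Phi(\mc{O}_{C_i}))$, combined with the spherical classification, force $\Phi(\mc{O}_{C_i})\cong\mc{O}_{C_i}(a)[k]$ up to a further element of $B$ acting trivially on the already-fixed objects; such twists exist because $T_{\mc{O}_{C_i}(a)}$ fixes $\mc{O}_{C_j}$ whenever $|i-j|\ge 2$, and for the overlap $j=i-1$ one combines two twists to cancel the cross action. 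After finitely many steps, the normalized $\Phi$ fixes every $\mc{O}_{C_i}$ and every $\mc{O}_{C_i}(-1)$, and the conclusion of Proposition~\ref{proposition:step -2 of A_n intro} then yields $\Phi(\mc{O}_x)\cong\mc{O}_x$ for each $x\in Z$. A version of Lemma~\ref{lem:birational} adapted to $D_Z(X)$ (or directly \cite[\S3.3]{BM98} together with Lemma~\ref{lem:Z}) forces the fully normalized $\Phi$ to reduce to the identity, placing the original $\Phi$ in the right-hand side.

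The main obstacle is the spherical-object classification invoked in the first step. One needs to produce, for a given spherical $\alpha\in D_Z(X)$ not already of the form $\mc{O}_G(a)[i]$, an inverse twist $T_{\mc{O}_G(b)}^{-1}\in B$ that strictly decreases a suitable numerical invariant (for instance, a norm on $K(D_Z(X))$ built from the Mukai pairing with the basis $\{[\mc{O}_{C_i}]\}$). Iterating until the invariant can no longer be decreased should land on the list $\{\mc{O}_G(a)[i]\}$. This is precisely the descending-induction scheme executed in \S\ref{sec:-2} for Proposition~\ref{proposition:step -2 of A_n intro}, and the same pattern applies here, with the simplification that $\omega_X\cong\mc{O}_X$ trivializes the Serre-functor corrections that appear in the elliptic-surface setting.
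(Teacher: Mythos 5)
First, note that the paper does not actually prove Theorem \ref{thm:IU05}: it is imported verbatim from \cite[Theorem 1.3]{IU05} (with the correction in \cite{IUU10}), and the present paper only reuses its \emph{method} in \S\ref{sec:I_n}--\S\ref{sec:-2}. So your proposal has to be measured against the cited proof, whose structure is mirrored by Propositions \ref{proposition:step -1 of A_n} and \ref{proposition:step -2 of A_n} here. Your overall strategy --- normalize $\Phi$ by elements of $B$ until spherical objects become line bundles on components, then identify the residual autoequivalence as standard --- is indeed the right one, and your $\supseteq$ direction is fine. But there are two genuine gaps in the $\subseteq$ direction. The first is that you derive the simultaneous normalization of the \emph{pair} $(\mc{O}_{C_1},\mc{O}_{C_1}(-1))$ from the classification of a \emph{single} spherical object up to the $B$-action. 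These are two different statements of quite different difficulty: the single-object classification is the analogue of Proposition \ref{proposition:step -1 of A_n} (\cite[Proposition 1.6]{IU05}), while controlling $\Phi(\mc{O}_{C_1}(-1))$ \emph{after} $\Phi(\mc{O}_{C_1})$ has been normalized is the analogue of Proposition \ref{proposition:step -2 of A_n} (\cite[Proposition 1.7]{IU05}), which requires a separate descending induction (the entire \S\ref{sec:-2}-type case analysis with the spectral sequence \eqref{equation:spectral}). Knowing $\Psi_1\Phi(\mc{O}_{C_1})\cong\mc{O}_{C_b}(a)[i]$ says nothing direct about $\Psi_1\Phi(\mc{O}_{C_1}(-1))$ beyond its Hom-pairing with $\mc{O}_{C_b}(a)$, and closing that gap is most of the work. (A further small point: you cannot in general move $C_b$ back to $C_1$ by an automorphism of $X$, since $\Aut X$ need not act transitively on the components of the chain; the cited proof keeps the index $b$ and handles it differently.)

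The second gap is the propagation step along the chain. The claim that $T_{\mc{O}_{C_i}(a)}$ fixes $\mc{O}_{C_j}$ for $|i-j|\ge 2$ is correct (Remark \ref{rem:twist}), but for $j=i-1$ the twist genuinely moves $\mc{O}_{C_{i-1}}$, and ``one combines two twists to cancel the cross action'' is not an argument --- no such combination is exhibited, and it is not clear one exists that simultaneously preserves all previously fixed objects while normalizing $\Phi(\mc{O}_{C_i})$. The actual deduction avoids this entirely by induction on $n$: once the pair on $C_1$ is normalized so that points of $C_1$ are fixed, the autoequivalence restricts to one of $D_{Z'}(X)$ for the $A_{n-1}$-configuration $Z'=C_2\cup\cdots\cup C_n$, and one invokes the theorem for $n-1$ (this is exactly the reduction carried out after Proposition \ref{proposition:step -2 of A_n} in \S\ref{sec:I_n}). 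Finally, a smaller inaccuracy: the invariant that decreases under the inductive twisting is not a norm on $K(D_Z(X))$ built from the Euler pairing --- classes in $K$-theory cannot distinguish a complex from one with cancelling cohomology --- but the unsigned generic-point length $l(\alpha)=\sum_i\sum_p\length_{\mc{O}_{X,\eta_i}}H^p(\alpha)_{\eta_i}$ of the cohomology sheaves, as in \S\ref{sec:-1}. With these ingredients supplied the argument does close up, but as written the two steps above are the substance of the proof and are missing.
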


We will use ideas of the proof of Theorem \ref{thm:IU05} to prove our main result, namely Theorem \ref{thm:typeI_n}.

\subsection{Autoequivalences of elliptic curves}\label{subsec:elliptic}
Let $E$ be an elliptic curve.
To a given $\Phi=\Phi^{\mathcal{P}} \in \Auteq D(E)$, 
we associate  a group automorphism 
$\rho(\Phi)$ of $H^*(E,\Z)\cong \Z^4$ 
given by
$$
\rho(\Phi^{\mathcal{P}})(-):=p_{2*}(\ch (\mathcal{P})\cdot p_1^*(-))
$$
(cf.~\cite[Corollary 9.43]{Hu06}). 
This gives a group homomorphism 
$$
\rho\colon \Auteq D(E) \to \GL (H^*(E,\Z)),
$$
and it is known that $\rho$ preserves the parity, i.e.
it decomposes as 
$
\rho=\eta\oplus \theta,
$
where
$\eta(\Phi)\in \GL(H^\mathrm{odd}(E,\Z))$ and 
$\theta(\Phi)\in \GL(H^\mathrm{ev}(E,\Z))$.

Because $\theta (\Phi)\in \GL(2,\Z)$ preserves the Euler form $\chi(\makebox[2mm]{\hrulefill},\makebox[2mm]{\hrulefill})$, we can see that 
$\theta (\Phi)$ actually gives an element of $\SL(2,\Z)$.
Take the classes $\ch (\mathcal{O}_E)$ and $\ch (\mathcal{O}_x)$ 
for some point $x$ as 
a basis of $H^\mathrm{ev}(E,\Z)\cong \Z^2$. In terms of this basis,
$T_{\mathcal{O}_E}$, 
$T_{\mathcal{O}_x}(\cong\makebox[2mm]{\hrulefill}\otimes \mathcal{O}_E(x))$ and 
$\Phi^\mathcal{U}$ are given by  
\begin{equation*}
\begin{pmatrix}
1& -1\\
0& 1   
\end{pmatrix},\quad
\begin{pmatrix}
1& 0\\
1& 1   
\end{pmatrix},\quad
\begin{pmatrix}
0& 1\\
-1& 0   
\end{pmatrix}
\end{equation*}
respectively, where $\mathcal{U}$ is the normalized Poincare bundle on 
$E\times E$. Here, note that every elliptic curve is principally polarized, 
and hence we can identify $E$ with $\hat E$. 
Two of these elements actually generate the group 
$\SL (2,\Z)$, and 
therefore the map 
$$
\theta\colon \Auteq D(E)\to \SL(2,\Z)
$$
is surjective. One can compute the kernel of $\theta$ to get   
 a short exact sequence of groups
\begin{equation*}
1\to  \hat{E}\rtimes\Aut E\times \Z[2] \to \Auteq D(E)\to \SL(2,\Z)\to 1.
\end{equation*}
%


\subsection{Automorphisms of elliptic surfaces}\label{subsec:automorphism_eliiptic_surface}
Let $\pi\colon S\to C$ and $\pi'\colon S'\to C'$
be projective elliptic surfaces, and suppose that  each of $S$ and $S'$ has
a unique elliptic fibration.  
Then every isomorphism $\varphi\colon S\to S'$
induces an isomorphism $C\to C'$.
In the cases that we consider (namely that $S'=J_S(a,b)$; see \S\ref{subsec:bridgeland})
there is a natural identification between $C$ and $C'$. 
Hence, the induced isomorphism is naturally 
regarded as an automorphism of $C$. We denote it by $\varphi _C$.
In other words, $\varphi_C$ satisfies $\pi'\circ\varphi=\varphi_C\circ\pi$. 
We define
$$
\Aut_SC:=\{\varphi _C \in \Aut C \mid \varphi \in \Aut S\}
$$
and 
$$
\Aut_{C}S:=\{\varphi \in \Aut S \mid \varphi_C={\id}_C \}.
$$
Consequently, we have a short exact sequence
$$
1\to \Aut_{C}S \to \Aut S \to \Aut_SC\to 1.
$$

An elliptic surface $S$ with $\kappa(S)\ne 0$ provides an example of a surface admitting a unique elliptic fibration: 
The canonical bundle formula of elliptic surfaces implies that any elliptic fibration on $S$ is defined 
by the linear system $|rK_{S}|$ with some nonzero rational number $r$. Therefore,
$S$ has a unique elliptic fibration structure.

\subsection{Fourier--Mukai transforms on elliptic surfaces}\label{subsec:bridgeland}

 Bridgeland, Maciocia and Kawamata show (\cite{BM01}, \cite{Ka02}) 
that for a smooth projective surface $S$,
if $S$ has a non-trivial Fourier--Mukai partner $T$, that is $|\FM (S)|\ne 1$,
then both of $S$ and $T$ are abelian varieties, K3 surfaces or minimal elliptic surfaces with 
non-zero Kodaira dimensions. 

We consider the last case in more detail. 
Let $\pi:S\to C$ be an elliptic surface. 
The results referred to in \S \ref{subsec:bridgeland} are originally stated under the assumption that $S$ is projective, but 
some of them still hold true without the projectivity of $S$. 
For our purpose, it is sometimes important to consider non-projective elliptic surfaces, hence
we do not assume that $S$ is projective unless specified otherwise.

For an object $E$ of $D(S)$, we define the fiber degree of $E$ as
\[d(E)=c_1(E)\cdot F, \]
where $F$ is a general fiber of $\pi$. 
Let us denote by $r(E)$ the rank of $E$ and by $\lambda_{S}$  
the highest common factor of the fiber degrees of objects of $D(S)$. 
Equivalently,
$\lambda_{S}$ is the smallest number $d$ such that there is a 
holomorphic $d$-section of $\pi$. 
Consider integers $a$ and $b$ with $a>0$ and $b$ coprime to $a\lambda_{S}$. 
By \cite{Br98}, there exists a smooth,
$2$-dimensional component $J_S (a,b)$ of the moduli space of pure dimension 
one stable sheaves on $S$,
the general point of which represents a rank $a$, degree $b$ stable 
vector bundle supported on a smooth fiber of $\pi$. 
There is a natural morphism $J_S (a,b)\to C$, taking a point representing
 a sheaf supported on the
fiber $\pi ^{-1}(x)$ of $S$ to the point $x$. This morphism is a minimal 
elliptic fibration (see \cite{Br98}).
Put $J_S(b):=J_S(1,b)$. 
Obviously, $J_S(0)\cong J(S)$, the Jacobian surface associated to $S$, 
and $J_S(1)\cong S$. 
As shown in \cite[Lemma 4.2]{BM01}, there is also an isomorphism
\begin{equation}\label{eqn:J(a,b)=J(b)}
J_S(a,b)\cong J_S(b)
\end{equation} 
over $C$. 


\begin{thm}[Theorem 5.3 in \cite{Br98}]\label{thm:SLFM}
Let $\pi\colon S\to C$ be an elliptic surface and take an element 
$$M=\begin{pmatrix}
c& a\\
d& b   
\end{pmatrix}
\in \SL (2,\Z)
$$
such that $\lambda_{S}$ divides $d$ and $a>0$. Then there exists a
universal sheaf $\mathcal{U}$ on $J_S(a,b)\times S$, flat over both factors,
such that for any point $(x,y)\in J_S(a,b)\times S$, $\mathcal{U}|_{x\times S}$ has Chern class $(0,af,-b)$ on $S$ and 
$\mathcal{U}|_{J_S(a,b)\times y}$ has Chern class $(0,af,-c)$ on $J_S(a,b)$.
 The resulting functor $\Phi ^{\mathcal{U}}_{J_S(a,b)\to S}$ is an equivalence and satisfies
\begin{equation}\label{eqn:matrix}
\begin{pmatrix}
r(\Phi(E))\\
d(\Phi(E))  
\end{pmatrix}
=
M
\begin{pmatrix}
r(E)\\
d(E)   
\end{pmatrix}
\end{equation} 
for all objects $E\in D(J_S(a,b))$ 
\end{thm}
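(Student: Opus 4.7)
The plan is to follow Bridgeland's approach in \cite{Br98}, proceeding in three stages: construct the moduli space, construct the universal sheaf, and verify both the equivalence property and the matrix formula.

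First, I would construct $J_S(a,b)$ as a moduli component of pure one-dimensional stable sheaves on $S$ with fixed Chern class $(0, af, -b)$. The coprimality of $b$ with $a\lambda_{S}$ forces stability to coincide with semistability: any would-be destabilising subsheaf on a smooth fiber would have fiber degree divisible by $\lambda_{S}$ yet matching slope $b/a$, which, combined with $\gcd(b, a\lambda_{S}) = 1$, forces the subsheaf to equal the whole sheaf. Smoothness of $J_S(a,b)$ and dimension two follow from formula \eqref{eqn:euler}, since for a stable $E$ supported on a fiber $\omega_{S}$ is trivial along $\Supp E$, so $\Ext^{2}(E,E) \cong \C$ by Serre duality while $\chi(E, E) = 0$, giving $\dim \Ext^{1}(E,E) = 2$. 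The morphism $J_S(a,b)\to C$ sending a sheaf to the basepoint of its supporting fiber is then a relatively minimal elliptic fibration.

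Second, I would produce the universal sheaf $\mathcal{U}$ on $J_S(a,b) \times S$. A local universal family exists automatically on the stable locus; the obstruction to globalising it to a genuine (rather than twisted) sheaf lies in the Brauer group of $J_S(a,b)$, and the coprimality hypothesis is exactly what allows one to twist by a suitable line bundle (or an appropriate combination involving a multisection of $\pi$) to kill this obstruction. After normalising $\mathcal{U}$ so that $\mathcal{U}|_{\{x\}\times S}$ has Chern class $(0, af, -b)$, the identity $\det M = 1$ combined with a Grothendieck--Riemann--Roch computation on the two projections pins down the Chern class of $\mathcal{U}|_{J_S(a,b)\times \{y\}}$ as $(0, af, -c)$.

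Finally, I would verify that $\Phi^{\mathcal{U}}$ is an equivalence via the Bondal--Orlov criterion in Bridgeland's formulation: the fibers $\mathcal{U}_x$ form a spanning class of $D(J_S(a,b))$, are pairwise $\Ext$-orthogonal by stability together with non-isomorphism, satisfy $\Hom(\mathcal{U}_x, \mathcal{U}_x) = \C$ by stability, and satisfy $\mathcal{U}_x \otimes \omega_{S} \cong \mathcal{U}_x$ because $\omega_{S}$ restricts trivially to any fiber. Full faithfulness follows, and the fact that both $\omega_S$ and $\omega_{J_S(a,b)}$ are pulled back from $C$ (so $\Phi^{\mathcal{U}}$ commutes with tensoring by the respective canonical bundles) upgrades this to an equivalence. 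The matrix identity \eqref{eqn:matrix} then comes out directly: the rank and fiber-degree components of $p_{S*}(\ch \mathcal{U} \cdot p_{J}^{*} \ch E)$ are governed precisely by the Chern data of $\mathcal{U}|_{\{x\}\times S}$ and $\mathcal{U}|_{J_S(a,b)\times \{y\}}$, and these encode exactly $M$. The principal obstacle is the vanishing of the Brauer class so that a genuine universal sheaf exists --- this is exactly where the coprimality hypothesis between $b$ and $a\lambda_{S}$ is indispensable.
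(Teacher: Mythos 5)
This statement is not proved in the paper at all: it is quoted verbatim as Theorem 5.3 of \cite{Br98}, so there is no internal argument to compare against. Judged as a reconstruction of Bridgeland's proof, your outline has the right architecture (fineness of the moduli space from the coprimality $\gcd(a\lambda_S,b)=1$ via a $K$-theory class of Euler pairing $1$, smoothness and $\dim=2$ from $\chi(E,E)=0$ together with $\Ext^2(E,E)\cong\C$, the equivalence via the pointwise Bondal--Orlov/Bridgeland criterion, and the matrix \eqref{eqn:matrix} from the Chern data of the two restrictions of $\mathcal{U}$).

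There is, however, one step that fails as written. You justify both $\mathcal{U}_x\otimes\omega_S\cong\mathcal{U}_x$ and $\Ext^2(\mathcal{U}_x,\mathcal{U}_x)\cong\C$ by asserting that $\omega_S$ restricts trivially to any fiber. This is false in the presence of multiple fibers: if $mF_0$ is a fiber of multiplicity $m>1$, the canonical bundle formula gives $\omega_S|_{F_0}\cong N^{\otimes(m-1)}$ with $N=\mathcal{O}_S(F_0)|_{F_0}$ a torsion line bundle of order exactly $m$, hence nontrivial. Multiple fibers are squarely within the scope of the theorem and of this paper (they are what typically forces $\lambda_S>1$, and the example in \S\ref{sec:examples} is built by a logarithmic transformation), so for points $x\in J_S(a,b)$ lying over such a fiber the $\omega_S$-invariance of $\mathcal{U}_x$ needs a genuine argument about how stable sheaves of class $(0,af,-b)$ sit on the non-reduced fiber; this is precisely the delicate part of Bridgeland's \S 5 and cannot be dispatched by fiberwise triviality of $\omega_S$. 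A secondary, more minor point: your explanation of why semistability forces stability misplaces the role of $\lambda_S$. On a smooth fiber a destabilising subsheaf has $c_1=a'f$ with $0<a'<a$ and only $\gcd(a,b)=1$ is used; $\lambda_S$ enters for sheaves supported on non-generic fibers and, as you correctly say later, in the existence of an untwisted universal sheaf.
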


\begin{rem}\label{rem:d_divides_lambda}
For integers $a>0$ and $b$ with $b$ coprime to $a\lambda_{S}$, let us consider the Fourier--Mukai transform 
$\Phi=\Phi ^{\mathcal{U}}_{J_S(a,b)\to S}$.
Take a smooth fiber $F$ of $\pi$ over a point $c\in C$,
and denote by $F'$ 
the smooth fiber of the morphism $J_S(a,b)\to C$ over the point $c$.
It turns out that $F'$ is isomorphic to $J_F(a,b)$, and hence 
$F'$ is isomorphic to $F$ by \cite[Theorem 7]{At57}. 
The integral transform defined by the kernel 
$\mc{U}|_{F\times F'}\in D(F\times F')$
induces an equivalence between $D(F)$ and $D(F')$ by Lemma \ref{lem:base_change}.
Fixing an isomorphism $F\cong F'$, we regard the equivalence 
$\Phi^{\mc{U}|_{F\times F'}}$ as an autoequivalence of $D(F)$.

Note that 
$$
M:=\theta(\Phi^{\mc{U}|_{F\times F'}}):=
\begin{pmatrix}
c& a\\
d& b   
\end{pmatrix}
$$ 
satisfies \eqref{eqn:matrix}
(see \S \ref{subsec:elliptic} for the definition of $\theta$).
We see that  $\lambda_S$ divides $d=d(\Phi (\mc{O}_{J_S(a,b)}))$.
We will use this fact in \S \ref{subsec:non-reducible:structure}.
\end{rem}

Theorem \ref{thm:SLFM} implies that 
$J_S(b)(\cong J_S(a,b))$ is a Fourier--Mukai partner of $S$ when $(b,\lambda_S)=1$. Actually, the converse is also true for 
projective elliptic surfaces $S$ with non-zero Kodaira dimension:


\begin{thm}[Proposition 4.4 in \cite{BM01}]\label{BMelliptic}
Let $\pi :S\to C$ be a projective elliptic surface 
and $S'$ a smooth projective variety.
Assume that the Kodaira dimension $\kappa (S)$ is non-zero.
Then the following are equivalent.
\renewcommand{\labelenumi}{(\roman{enumi})}
\begin{enumerate}
\item 
$S'$ is a Fourier--Mukai partner of $S$. 
\item 
$S'$ is isomorphic to $J_S(b)$ for some integer $b$ with $(b,\lambda _{S})=1$. 
\end{enumerate}
\end{thm}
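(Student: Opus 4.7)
The plan is to prove the two implications separately. For (ii) $\Rightarrow$ (i), given $b$ with $(b,\lambda_S)=1$ I can find integers $c,d$ with $\lambda_S\mid d$ and $cb-d=1$; then $\bigl(\begin{smallmatrix}c & 1\\ d & b\end{smallmatrix}\bigr)\in\SL(2,\Z)$, so Theorem \ref{thm:SLFM} yields a Fourier--Mukai equivalence $\Phi^{\mc{U}}_{J_S(1,b)\to S}$. Combined with the isomorphism $J_S(1,b)\cong J_S(b)$ from \eqref{eqn:J(a,b)=J(b)}, this exhibits $J_S(b)$ as a Fourier--Mukai partner of $S$.

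For the converse (i) $\Rightarrow$ (ii), fix an equivalence $\Phi=\Phi^{\mc{P}}\colon D(S')\to D(S)$. The first step is to recognize that $S'$ is itself a relatively minimal elliptic surface over the same base $C$. Since the canonical ring is a derived invariant, $\kappa(S')=\kappa(S)\ne 0$; by the canonical bundle formula $K_S$ is numerically a nonzero rational multiple of the fiber class $F$, and the pluri(anti)canonical system defines $\pi\colon S\to C$. The corresponding construction on $S'$ then produces an elliptic fibration $\pi'\colon S'\to C$ over the same base.

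The crux is to show that for every $x'\in S'$ the object $\Phi(\mc{O}_{x'})$ is, after one common shift, a pure one-dimensional sheaf supported on a single fiber of $\pi$. The Serre-functor intertwining gives $\Phi(\mc{O}_{x'})\otimes\omega_S\cong\Phi(\mc{O}_{x'})$; taking Chern characters yields $r(\Phi(\mc{O}_{x'}))\cdot c_1(\omega_S)=0$, and since $K_S$ is non-torsion this forces $r(\Phi(\mc{O}_{x'}))=0$. Comparing the degree-$4$ part then gives $c_1(\Phi(\mc{O}_{x'}))\cdot c_1(\omega_S)=0$, hence $c_1(\Phi(\mc{O}_{x'}))\cdot F=0$, so $\Supp\Phi(\mc{O}_{x'})$ is a union of vertical curves. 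Simplicity of $\mc{O}_{x'}$ is preserved by $\Phi$, so $\Phi(\mc{O}_{x'})$ is indecomposable and its support is connected, hence contained in a single fiber. Lemma \ref{lem:FMfamily} applied on the open locus where $\Phi(\mc{O}_{x'})$ is a sheaf, together with a standard $\Ext$-parity computation using Serre duality, then upgrades this to an honest pure sheaf for every $x'$ after a common shift.

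Once this is established, for general $x'$ the sheaf $\Phi(\mc{O}_{x'})$ has Chern character of the form $(0,af,\cdot)$ with $a>0$ and fiber degree $b$, and is stable because it is simple on a smooth elliptic fiber. The assignment $x'\mapsto\Phi(\mc{O}_{x'})$ then identifies $S'$ with a smooth, two-dimensional component of the moduli space of such sheaves, namely $J_S(a,b)$; fineness of the moduli forces $(b,a\lambda_S)=1$, and invoking \eqref{eqn:J(a,b)=J(b)} we conclude $S'\cong J_S(b)$ with $(b,\lambda_S)=1$. The main obstacle is the pure-sheaf-on-a-fiber claim: the assumption $\kappa(S)\ne 0$ enters decisively there, as Example \ref{exa:twist}(iii) illustrates how the analogous step breaks down on K3 surfaces.
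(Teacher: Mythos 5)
First, a point of comparison: the paper does not prove this statement at all --- it is quoted directly from Bridgeland--Maciocia \cite[Proposition 4.4]{BM01} --- so what you have written is a reconstruction of the cited source's argument rather than an alternative to anything internal to the paper. Your direction (ii)~$\Rightarrow$~(i) is correct and complete in outline: since $(b,\lambda_S)=1$ you can solve $cb-d=1$ with $\lambda_S\mid d$, and Theorem \ref{thm:SLFM} together with the convention $J_S(b)=J_S(1,b)$ gives the equivalence. The overall architecture of (i)~$\Rightarrow$~(ii) (identify the base, show images of point sheaves are fiber-supported, read off a stable sheaf on a smooth fiber, identify $S'$ with a fine moduli space $J_S(a,b)$) is also the right one.

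There is, however, one genuinely false intermediate claim in your ``crux'' step. It is not true that $\Phi(\mc{O}_{x'})$ is a shift of a pure sheaf for \emph{every} $x'$: if $S$ has a reducible fiber, compose any equivalence with the spherical twist $T_{\mc{O}_G}$ along a $(-2)$-curve $G$; for $x\in G$ one computes from the triangle \eqref{eqn:twist_triangle} that $H^{-1}(T_{\mc{O}_G}(\mc{O}_x))\cong\mc{O}_G(-1)$ and $H^{0}(T_{\mc{O}_G}(\mc{O}_x))\cong\mc{O}_G$, a two-term complex. This is precisely why the paper's Lemma \ref{lem:support}~(i) requires the numerical hypothesis $c_1(H^i)\cdot c_1(H^i)=0$ --- automatic only when the support lies in an irreducible fiber --- before invoking \cite[Lemma 2.9]{BM01}. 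Your argument survives because the concluding step only uses \emph{general} $x'$, whose image lands in a smooth fiber where the claim is correct and the shift is constant by Lemma \ref{lem:FMfamily}; but the statement ``for every $x'$'' cannot be proved, because it is false. Two smaller points: you omit the case where $\Phi(\mc{O}_{x'})$ has zero-dimensional support for general $x'$ (i.e.\ $\dim\Supp\mc{P}=2$, so $a=0$ and the sheaf is not pure one-dimensional); there one concludes via Lemma \ref{lem:birational} that $S'\cong S\cong J_S(1)$, which still satisfies (ii). And the implication ``fineness of the moduli forces $(b,a\lambda_S)=1$'' runs backwards: one first proves the coprimality, from the fact that $\Phi$ acts on $(r,d)$ by a matrix in $\SL(2,\Z)$ whose lower-left entry is divisible by $\lambda_S$, and only then knows that the fine moduli space $J_S(a,b)$ of Theorem \ref{thm:SLFM} exists, after which \eqref{eqn:J(a,b)=J(b)} gives $S'\cong J_S(b)$.
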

\begin{rem}
Theorem \ref{BMelliptic} tells us that the Fourier--Mukai partner $S'$ of $S$ 
has an elliptic fibration $\pi'\colon S'\to C$.
Moreover, we can see $\kappa(S')=\kappa(S)\ne 0$ (cf.~\cite[Lemma 4.3]{BM01}). 
Then, as is explained in \S \ref{subsec:automorphism_eliiptic_surface},
$\pi'$ is a unique elliptic fibration structure on $S'$.
In particular, if two elliptic surfaces are mutually Fourier--Mukai partners, the base curves of elliptic fibrations
can be identified. We use this fact implicitly afterwards.  
\end{rem}

There are natural isomorphisms
$$
J_S(b)\cong J_S(b+\lambda_{S})\cong J_S(-b)
$$
over $C$ (see \cite[Remark 4.5]{BM01}).
Therefore, we can define the subset 
\begin{equation*}
H_S:=\{b\in (\Z/\lambda_{S}\Z)^* \mid J_S(b)\cong S \}
\end{equation*}
of the multiplicative group $(\Z/\lambda_{S}\Z)^*$.

\begin{cla}
For any pair of integers $b,c$, with $b,c\in\Z/\lambda_S\Z$, there is an isomorphism
$$
J_{J_S(c)}(b)\cong J_S(bc). 
$$ 
\end{cla}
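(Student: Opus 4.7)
The plan is to realize the isomorphism as coming from a composition of Fourier--Mukai transforms. First I would verify that $\lambda_{J_S(c)}=\lambda_S$: Theorem~\ref{thm:SLFM} provides a Fourier--Mukai transform $\Phi_c\colon D(J_S(c))\to D(S)$ with matrix $M_c=\begin{pmatrix}c'&1\\ d_c&c\end{pmatrix}$ satisfying $\lambda_S\mid d_c$ and $cc'-d_c=1$, so in particular $\gcd(c',d_c)=1$. Since $\Phi_c$ is a $\Z$-linear bijection on Grothendieck groups, the set of fiber degrees of objects of $D(J_S(c))$ equals $\{-d_c r'+c'd':r'\in\Z,\,d'\in\lambda_S\Z\}=\gcd(d_c,c'\lambda_S)\cdot\Z$, and a short prime-by-prime check using $\gcd(c',d_c)=1$ and $\lambda_S\mid d_c$ shows this equals $\lambda_S\Z$.

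Having $\lambda_{J_S(c)}=\lambda_S$, I would apply Theorem~\ref{thm:SLFM} to $J_S(c)\to C$ with parameters $(1,b)$ to obtain a second Fourier--Mukai transform $\Phi_b\colon D(J_{J_S(c)}(b))\to D(J_S(c))$ with matrix $M_b=\begin{pmatrix}b'&1\\ d_b&b\end{pmatrix}$, $\lambda_S\mid d_b$. The composition $\Phi:=\Phi_c\circ\Phi_b$ is a Fourier--Mukai transform whose action on $(r,d)$ is given by
\[
M_cM_b=\begin{pmatrix}c'b'+d_b & c'+b\\ d_cb'+cd_b & d_c+bc\end{pmatrix},
\]
whose bottom-right entry is $d_c+bc\equiv bc\pmod{\lambda_S}$, whose bottom-left entry is divisible by $\lambda_S$, and whose top-right entry $c'+b$ can be arranged to be positive by choosing a suitable representative $d_c\in\lambda_S\Z$ of the inverse of $c$ modulo $\lambda_S$. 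Theorem~\ref{thm:SLFM} then yields a third Fourier--Mukai transform $\Phi_{bc}\colon D(J_S(c'+b,\,d_c+bc))\to D(S)$ whose matrix is exactly $M_cM_b$, and I set $\Xi:=\Phi_{bc}^{-1}\circ\Phi$.

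For a point $x\in J_{J_S(c)}(b)$, the sheaf $\Phi_b(\mc{O}_x)$ is a rank $1$ degree $b$ stable sheaf supported on a fiber of $J_S(c)\to C$; applying the relative transform $\Phi_c$ (Lemma~\ref{lem:base_change}) produces a pure one-dimensional stable sheaf on $S$ of the numerical class parametrized by $J_S(c'+b,\,d_c+bc)$. Because $\Phi_{bc}$ is defined by the universal family on that fine moduli space, $\Phi_{bc}^{-1}$ sends such a sheaf to a shift of the structure sheaf of a point. Hence $\Xi(\mc{O}_x)$ is a shift of a structure sheaf of a point for every $x$, and Lemma~\ref{lem:birational} yields an isomorphism $\varphi\colon J_{J_S(c)}(b)\xrightarrow{\sim} J_S(c'+b,\,d_c+bc)$. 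Combining with \eqref{eqn:J(a,b)=J(b)} and $J_S(b)\cong J_S(b+\lambda_S)$ gives $J_{J_S(c)}(b)\cong J_S(bc)$.

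The main obstacle is verifying the hypothesis of Lemma~\ref{lem:birational} uniformly in $x$: points of $J_{J_S(c)}(b)$ may correspond to sheaves supported on singular or multiple fibers, where there is no clean ``rank and degree on a smooth elliptic curve'' picture, so one cannot merely invoke the Atiyah classification on each fiber. The resolution is to argue moduli-theoretically rather than fiber-wise: $\Phi_b(\mc{O}_x)$ is globally a pure one-dimensional stable sheaf on $J_S(c)$, its image under $\Phi_c$ is again such a sheaf on $S$ whose numerical class lands in the component $J_S(c'+b,\,d_c+bc)$, and flatness of the universal family on that fine moduli space ensures $\Phi_{bc}^{-1}$ sends it to a shifted point for every $x$.
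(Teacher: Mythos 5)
Your route is genuinely different from the paper's. The paper does not compose Fourier--Mukai kernels at all: it fixes an elliptic surface $B$ with a section isomorphic to $J(S)$ over $C$, regards $S$ as an element $\xi$ of the Weil--Chatelet group $WC(B)$, and quotes \cite[\S 2.2]{Ue11} to see that $J_S(c)$, $J_{J_S(c)}(b)$ and $J_S(bc)$ correspond to $c\xi$, $b(c\xi)$ and $(bc)\xi$ respectively; since the latter two classes coincide, the surfaces are isomorphic. Your alternative --- composing the universal-sheaf transforms of Theorem~\ref{thm:SLFM} --- is a legitimate strategy, and your bookkeeping is correct: the verification that $\lambda_{J_S(c)}=\lambda_S$, the product matrix $M_cM_b$, and the choice of representative $c'\equiv c^{-1}\ (\module\ \lambda_S)$ making $c'+b>0$ are all fine.

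However, the step you yourself flag as the main obstacle is not actually resolved by what you write. The assertion that $\Phi_c\bigl(\Phi_b(\mc{O}_x)\bigr)$ ``is again such a sheaf on $S$'' --- a pure one-dimensional \emph{stable} sheaf lying in the component $J_S(c'+b,\,d_c+bc)$ --- for \emph{every} $x$, including points over reducible or multiple fibers, is precisely the preservation-of-stability problem for relative Fourier--Mukai transforms. It is a substantive theorem (this is the content of \cite[\S 7]{Br98}, and it is what makes \cite[Lemma 4.2]{BM01} work), and it does not follow from ``arguing moduli-theoretically'' or from flatness of the universal family; flatness only tells you that $\Phi_{bc}^{-1}$ sends sheaves already known to be parametrized by the moduli space to point sheaves. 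Without the stability input you only know that $\Xi(\mc{O}_x)$ is a shifted point sheaf for $x$ over smooth fibers, which is not enough to invoke Lemma~\ref{lem:birational}. The gap is fillable in two ways: either cite Bridgeland's stability result, or --- closer to the toolkit of this paper --- observe that $\Xi(\mc{O}_x)\cong\mc{O}_y$ (up to shift) for all $x$ over smooth fibers already forces the kernel of $\Xi$ to have two-dimensional support, and then run the argument of Lemma~\ref{lem:support} via \cite[Lemma 4.2]{Ka02}: a component of the support projects birationally to both surfaces, the induced birational map is an isomorphism in codimension one, hence an isomorphism of smooth projective surfaces, and no pointwise stability statement is needed. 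With either repair, combining with \eqref{eqn:J(a,b)=J(b)} and $J_S(b)\cong J_S(b+\lambda_S)$ finishes the proof as you indicate.
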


\begin{proof}
Take an elliptic surface $B\to C$ with a section such that there is an isomorphism $\varphi_1\colon  J(S)\to B$ over $C$.
Let us set $\xi:=(S,\varphi_1)\in WC(B)$ (see \cite[\S2.2]{Ue11} for the definition of the Weil-Chatelet group $WC(B)$).
Then we see in \cite[\S2.2]{Ue11} that there is an isomorphism $\varphi_c\colon J(J_S(c))\to B$ over $C$ such that 
$(J_S(c),\varphi_c)$ corresponds to the element $c\xi\in WC(B)$.
By the same argument,  there are isomorphisms $\varphi'_{bc}\colon  J(J_{J_S(c)}(b))\to B$ and 
$\varphi_{bc}\colon  J(J_S(bc))\to B$ such that  both of $(J_{J_S(c)}(b),\varphi'_{bc})$ and $(J_S(bc),\varphi_{bc})$ 
correspond to the element $bc\xi$.
This finishes the proof.
\end{proof}

Since we have 
$$J_S(1)\cong S$$ 
(see, e.g.~\cite[\S2.2]{Ue11} and \cite[Remark 4.5]{BM01}),
we can see that the condition $J_S(b)\cong S$ implies that $J_S(c)\cong S$ for $c\in \Z$ with 
$bc\equiv 1$ ($\module m$). 
Therefore, it turns out that $H_S$ is a subgroup of $(\Z/\lambda_{S}\Z)^*$.
In particular, there is a natural 
one-to-one correspondence between 
the set $\FM (S)$ and the quotient group 
$(\Z/\lambda_{S}\Z)^* /H_S$. 

It is not easy  to describe the group $H_S$ concretely in general, 
which is equivalent to determine the set $\FM(S)$ (see  \cite{Ue04} and \cite{Ue11}). 
However when $\lambda_S\le 2$, $(\Z/\lambda_{S}\Z)^*$ is 
trivial, and hence $\FM (S)=\{S\}$. 

If $\lambda_S>2$, the group $H_S$ contains at least two elements 
$1,\lambda_S-1\in (\Z/\lambda_{S}\Z)^*$.
Hence, we have
\begin{equation*}
|\FM (S)|\le \varphi (\lambda_{S})/2,
\end{equation*}
where $\varphi$ is the Euler function. 
There are several examples in which 
we can compute the set $\FM (S)$ given in \cite[Example 2.6]{Ue11}.
In the upcoming paper \cite{Ue}, the author will also give examples
in which he can compute the set $\FM (S)$.
See \S \ref{sec:examples} for some more details.


\begin{rem}\label{rem:fibers}
Take a point $c\in C$ and an integer $b$ with $(b, \lambda_S)=1$. 
We know that there is an isomorphism $J(J_S(b))\cong J(S)$ 
over the curve $C$ (cf.~\cite[\S2.2]{Ue11}). 
Since it is known that the reduced form\footnote{If the fiber over the point $c$ is a multiple fiber of type 
${}_m\mathrm{I}_n$, then the reduced form is of type $\mathrm{I}_n$.} of the fibers 
of $S$ over the point $c$ is isomorphic to the fiber of $J(S)$ 
over $c$,
the same holds for the fibers of $S$ and $J_S(b)$.

Furthermore, 
the multiplicities of the fibers of 
$S$ and $J_S(b)$ over the same point are equal (see \cite[page 38]{Fr95} or the proof of \cite[Lemma 4.3]{BM01}).
Therefore, we conclude that the fibers on 
$S$ and $J_S(b)$ over the same 
point are isomorphic to each other.
\end{rem}

\section{Autoequivalences of elliptic surfaces with non-zero Kodaira dimension}\label{sec:autoeq_non-zero_Kodaira}

\subsection{Notation and the setting}\label{subsec:general}
Let
$$
\pi\colon S\to C \quad \text{and}\quad \pi'\colon S'\to 
C
$$ 
be projective elliptic surfaces
with non-zero Kodaira dimension, and we denote the projections
by
$$
p\colon S\times S'\to S \quad \text{and}\quad p'\colon S\times S'\to S'.
$$
Let $\Phi=\Phi^{\mathcal{P}}\colon D(S)\to D(S')$ 
be a Fourier-Mukai transform. The following is well-known.

\begin{cla}\label{cla:FM_support}
For each $y\in S$, $\Supp \Phi(\mathcal{O}_y)$ is contained in a single fiber of $\pi'$.
Furthermore, for a point $x\in S$ with $\pi(x)\ne \pi(y)$, we have $\Supp\Phi(\mathcal{O}_y)\cap \Supp \Phi(\mathcal{O}_x)= \emptyset$.
\end{cla}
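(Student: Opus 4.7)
The strategy is to exploit the fact that any Fourier--Mukai equivalence commutes with Serre functors, combined with the canonical bundle formula, to force $\Phi(\mc{O}_y)$ to lie set-theoretically over a single point of $C$. Write $F := \Phi(\mc{O}_y)$. Since $\mc{O}_y\otimes\omega_S\cong\mc{O}_y$ and $\Phi$ commutes with Serre functors, one obtains $F\otimes\omega_{S'}\cong F$. Because $\kappa(S')\ne 0$, a nonzero integer $r$ can be chosen together with a line bundle $M$ on $C$ such that $\omega_{S'}^{\otimes r}\cong \pi'^{*}M$, the linear system $|M|$ is base-point-free, and $\varphi_{|M|}\colon C\to\PP(H^{0}(M)^{*})$ is a closed embedding; indeed the fibration $\pi'$ is recovered from $|rK_{S'}|$ for a suitable rational $r$, as recalled in \S\ref{subsec:automorphism_eliiptic_surface}. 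Iterating gives $F\otimes \pi'^{*}M\cong F$; fix such an isomorphism $\alpha$.

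Because $\Phi$ is fully faithful, $\End_{D(S')}(F)=\End_{D(S)}(\mc{O}_y)=\C$. So for every $s\in H^{0}(M)$ the composite
$$
\sigma_s\colon F\xrightarrow{\id_F\otimes s} F\otimes\pi'^{*}M\xrightarrow{\alpha} F
$$
is a scalar $\lambda_s\cdot\id_F$, giving a $\C$-linear form $\lambda\colon H^{0}(M)\to\C$. Writing $D_s:=(s)_0\subset C$, the short exact sequence $0\to \mc{O}_{S'}\xrightarrow{s}\pi'^{*}M\to (\pi'^{*}M)|_{\pi'^{-1}(D_s)}\to 0$ tensored with $F$ produces a triangle whose cone is $F\Lotimes (\pi'^{*}M)|_{\pi'^{-1}(D_s)}$. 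If $\lambda\equiv 0$, then $\id_F\otimes s=0$ for every $s$, and the triangle splits to give $F\otimes\pi'^{*}M\oplus F[1]\cong F\Lotimes (\pi'^{*}M)|_{\pi'^{-1}(D_s)}$; comparing supports forces $\Supp F\subset\pi'^{-1}(D_s)$ for every nonzero $s$, which is impossible because $|M|$ has empty base locus. Therefore $\lambda\not\equiv 0$, and for each $s$ outside the hyperplane $K:=\ker\lambda$ the endomorphism $\sigma_s$ is an isomorphism, so the cone vanishes and $\Supp F\cap \pi'^{-1}(D_s)=\emptyset$.

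For any $c\in\pi'(\Supp F)$ and any $s\notin K$ we thus have $s(c)\ne 0$, i.e.\ the hyperplane $V_c:=\{s\in H^{0}(M)\mid s(c)=0\}$ lies in $K$; by dimension $V_c=K$. Since $\varphi_{|M|}$ is an embedding, its fibre over $[K]\in\PP(H^{0}(M)^{*})$ is at most a single point, so $\pi'(\Supp F)$ consists of a single point, proving the first assertion. The second assertion follows by applying the first to the quasi-inverse $\Psi$: if $z\in\Supp\Phi(\mc{O}_x)\cap\Supp\Phi(\mc{O}_y)$, then Lemma~\ref{lem:BM02} together with $\Psi\circ\Phi\cong\id$ gives $x,y\in\Supp\Psi(\mc{O}_z)$, whence $\pi(x)=\pi(y)$.

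The main obstacle is arranging that $r$ can be chosen so that $|M|$ is base-point-free with $\varphi_{|M|}$ an embedding; this rests on the canonical bundle formula and the fact that the (anti)canonical ring of $S'$ reconstructs the Iitaka fibration $\pi'$ when $\kappa(S')\ne 0$. Once this is in hand, the scalar-endomorphism argument and the cone manipulations in $D(S')$ are routine.
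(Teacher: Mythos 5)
Your proof is correct, but the first assertion is established by a genuinely different mechanism than the paper's. Both arguments start from the Serre-functor isomorphism $\Phi(\mc{O}_y)\otimes\omega_{S'}\cong\Phi(\mc{O}_y)$ and both rest on the canonical bundle formula, but the paper concludes quickly: since the class of $K_{S'}$ is a nonzero rational multiple of the fiber class and $\Supp\Phi(\mc{O}_y)$ is connected (because $\Phi(\mc{O}_y)$ is simple), the support must lie in a single fiber. You instead upgrade the canonical bundle formula to $\omega_{S'}^{\otimes r}\cong\pi'^{*}M$ with $M$ very ample on $C$, use simplicity only in the form $\End_{D(S')}(F)=\C$ to extract a linear form $\lambda$ on $H^{0}(M)$, and show that every point of $\pi'(\Supp F)$ determines the same hyperplane $\ker\lambda$, so very ampleness forces $\pi'(\Supp F)$ to be a single point. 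What your route buys is that it is essentially self-contained: it never invokes connectedness of the support of a simple object, and it makes fully explicit the step "these facts imply the support sits in one fiber," which the paper leaves implicit. The price is the extra verification that some power of $\omega_{S'}$ descends to a very ample bundle on $C$; this does hold for both $\kappa=1$ and $\kappa=-\infty$ by choosing $r$ a multiple of the fiber multiplicities, of the appropriate sign and large absolute value, so the step you flag as the main obstacle is indeed fine. The second assertion is proved exactly as in the paper, via Lemma~\ref{lem:BM02} and adjunction applied to the quasi-inverse.
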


\begin{proof}
By the assumption on the Kodaira dimension, 
the cohomology class of $K_{S'}$ is a non-zero rational multiple of the cohomology class of a fiber of $\pi'$.
On the other hand, since the Serre functor commutes with the equivalence $\Phi$,
there is an isomorphism
$$
 \Phi(\mathcal{O}_y)\cong  \Phi(\mathcal{O}_y)\otimes \omega _{S'}.
$$
Furthermore, since $\Phi(\mathcal{O}_y)$ is simple,
we know that $\Supp \Phi(\mathcal{O}_y)$ is connected.
These facts imply that
$\Supp \Phi(\mathcal{O}_y)$ is contained in a single fiber of $\pi'$.

Suppose for a contradiction  that  $\Supp \Phi(\mathcal{O}_y)\cap \Supp \Phi(\mathcal{O}_x)\ne \emptyset$.
Take a point $z$ in this non-empty set.  
Then, it follows from Lemma \ref{lem:BM02} that  neither 
$\RHom (\Phi(\mathcal{O}_x),\mathcal{O}_z)$ nor $\RHom (\Phi(\mathcal{O}_y),\mathcal{O}_z)$
vanishes. Again Lemma \ref{lem:BM02} implies that $\Supp\Phi^{-1}(\mathcal{O}_z)$
contains the points $x$ and $y$.
This induces a contradiction, because $\Supp\Phi^{-1}(\mathcal{O}_z)$ is contained in a single fiber of $\pi$, and
$\pi(x)\ne \pi(y)$. 
Therefore, we obtain  $\Supp\Phi(\mathcal{O}_y)\cap \Supp \Phi(\mathcal{O}_x)= \emptyset$.
\end{proof}

Denote the inclusion $S'\cong y\times S'\hookrightarrow S\times S'$ by $i$. 
Notice that
\begin{equation}\label{eqn:LiP}
\mathcal{P}|_{y\times S'}\cong\Phi(\mathcal{O}_y)
\quad\text{and}\quad 
%
%
\Supp \mathcal{P}|_{y\times S'}=i^{-1}(\Supp \mathcal{P})
\end{equation}
(see \cite[Lemma 3.29]{Hu06}).
Claim \ref{cla:FM_support} and \eqref{eqn:LiP} give 
$$
\dim \Supp \mathcal{P}=2  \text{ or }3.
$$ 

\begin{cla}\label{cla:isom_UVSC}
Let $V_1$ and $V'_1$ be non-empty open subsets of $C$.
Let us define $U_1:=\pi^{-1}(V_1)$ and  $U'_1:=\pi'^{-1}(V'_1)$. 
Suppose that there is an isomorphism $\varphi_{U_1}\colon U_1\to U'_1$.
Then there is an automorphism $\varphi_C\in \Aut C$ and an isomorphism $\varphi\colon S\to S'$ 
extending $\varphi_{U_1}$ such that $\varphi_C\circ \pi=\pi'\circ \varphi$ holds.  
\end{cla}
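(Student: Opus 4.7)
The plan is to first show that $\varphi_{U_1}$ maps fibers of $\pi|_{U_1}$ into fibers of $\pi'|_{U_1'}$, thereby inducing the required automorphism $\varphi_C$ of the base, and then to extend $\varphi_{U_1}$ itself by invoking the minimality of the two surfaces.

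For the first step, I would take a general point $v\in V_1$ so that $F_v:=\pi^{-1}(v)\subset U_1$ is a smooth elliptic curve, and set $E:=\varphi_{U_1}(F_v)\subset U_1'$. Adjunction on $S'$ gives $K_{S'}\cdot E+E^2=0$. As $v$ varies over a dense open of $V_1$, the $F_v$ are pairwise disjoint and sweep out a dense open of $U_1$, hence so do the corresponding curves $E$; it follows that $E^2=0$. On the other hand, since $\kappa(S')\ne 0$, the canonical bundle formula (see \S \ref{subsec:automorphism_eliiptic_surface}) makes a positive multiple of $K_{S'}$ linearly equivalent to a non-negative combination of fibers of $\pi'$, so $K_{S'}\cdot E\ge 0$. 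Together this gives $K_{S'}\cdot E=0$, hence $F'\cdot E=0$ for a general fiber $F'$ of $\pi'$. Since $E$ is irreducible and compact, it must be contained in a single fiber of $\pi'$, and being smooth elliptic it equals a smooth fiber $F'_{v'}$. The assignment $v\mapsto v'$ is a morphism $\psi_1\colon V_1\to V_1'$ (obtained, for instance, as the Stein factorization of $\pi'\circ\varphi_{U_1}$), and applying the same argument to $\varphi_{U_1}^{-1}$ shows that it is an isomorphism.

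Since $V_1$ and $V_1'$ are non-empty opens in the smooth projective curve $C$, $\psi_1$ extends uniquely to an automorphism $\varphi_C\in\Aut C$. Finally, $\varphi_{U_1}$ defines a birational map $\varphi\colon S\dashrightarrow S'$, and the assumption $\kappa(S)\ne 0$ combined with relative minimality forces both $S$ and $S'$ to be minimal smooth projective surfaces: any $(-1)$-curve $E\subset S$ would satisfy $K_S\cdot E=-1$, contradicting $K_S\cdot E\ge 0$ which again follows from the canonical bundle formula. The classical fact that a birational map between two minimal surfaces of non-negative Kodaira dimension extends to an isomorphism then supplies the desired morphism $\varphi\colon S\to S'$. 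By separation, $\varphi$ restricts to $\varphi_{U_1}$ on $U_1$, and the relation $\pi'\circ\varphi=\varphi_C\circ\pi$ holds on the dense open $U_1$ by construction, hence on all of $S$.

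The main obstacle I anticipate is the first step, namely the verification that smooth elliptic curves appearing as images $\varphi_{U_1}(F_v)$ are necessarily fibers of $\pi'$; once this fiber-preserving property is secured, the global extension is a standard application of minimal surface theory.
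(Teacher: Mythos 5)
Your first step is sound in substance but takes a longer route than the paper's: after reducing to $V_1'\ne C$, the paper simply notes that $V_1'$ is an affine curve, so the image of each (projective, connected) fiber of $\pi|_{U_1}$ under $\pi'\circ\varphi_{U_1}$ is a point --- this handles smooth and singular fibers at once. Your intersection-theoretic argument only treats smooth fibers directly, so you still owe the contraction of the singular fibers before $\psi_1$ exists; the parenthetical appeal to the Stein factorization of $\pi'\circ\varphi_{U_1}$ does not supply this (that Stein factorization is trivial, since the fibers of $\pi'\circ\varphi_{U_1}$ are preimages of whole fibers of $\pi'$ and hence already connected --- what is needed is that $\pi'\circ\varphi_{U_1}$ is constant on every fiber of $\pi$, and then the factorization through $\pi|_{U_1}$ via $\pi_*\mathcal{O}_{U_1}=\mathcal{O}_{V_1}$). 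Also, for $\kappa(S')=-\infty$ the canonical class is a \emph{negative} rational multiple of the fiber class, so your inequality $K_{S'}\cdot E\ge 0$ is false; it is fortunately unused, since $K_{S'}\cdot E=0$ already follows from adjunction together with $E^2=0$, and $F'\cdot E=0$ then follows because $K_{S'}$ is a \emph{nonzero} rational multiple of $F'$.

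The genuine gap is in the extension step. The standing hypothesis of this section is $\kappa\ne 0$, which includes $\kappa=-\infty$: the example in \S 7 is a rational elliptic surface, relatively minimal over $\PP^1$ but far from minimal as an abstract surface (it carries $(-1)$-curves, and indeed $K_S\equiv dF$ with $d<0$ there). So your assertion that $K_S\cdot E\ge 0$ for every curve, and hence that $S$ and $S'$ are minimal surfaces of non-negative Kodaira dimension, fails in exactly the cases the paper needs, and the classical uniqueness of minimal models is not available. The correct tool is the \emph{relative} statement: resolve the indeterminacy of $\varphi_{U_1}$ by $S\leftarrow T\to S'$ and invoke the uniqueness of the relatively minimal model of the elliptic fibration $T\to C$ (\cite[Proposition III.8.4]{BHPV}), which uses only the relative minimality of $\pi$ and $\pi'$ and is insensitive to $\kappa$. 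This is what the paper does. As written, your argument proves the claim only when $\kappa(S)=1$.
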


\begin{proof}
We may assume that $V'_1\ne C$ (otherwise the proof is done).
The only proper connected subvarieties of $V'_1$ are points. 
Hence $\pi' |_{U'_1}\circ \varphi_{U_1}$ maps fibers of $\pi|_{U_1}$ to points and, consequently, 
$\varphi_{U_1}$ maps fibers to fibers. Thus there is a bijection
$\varphi_{V_1}\colon V_1\to V'_1$ such that 
$\varphi_{V_1}\circ \pi|_{U_1}=\pi'|_{U'_1}\circ \varphi_{U_1}$. 
One can deduce that $\varphi_{V_1}$ is a morphism from the fact that the composition 
$\varphi_{V_1}\circ \pi|_{U_1}$
is a morphism.

Since $C$ is a smooth projective curve, $\varphi_{V_1}$
extends to an isomorphism $\varphi_C$. 
Take the elimination of indeterminancies $S\gets  T\to S'$ of $\varphi _{U_1}$.
Then \cite[Proposition III.8.4]{BHPV} assures that the morphism $T\to C$ uniquely factors through a relatively minimal 
elliptic fibration. Namely, $\varphi _{U_1}$ extends an isomorphism $\varphi$ in the statement.
\end{proof}

Let us denote by $Z$  
the union of all $(-2)$-curves on $S$.
Note that the set 
$Z$ coincides with the union of all reducible fibers.
We also denote by $U$ the complement of $Z$ in $S$,
by $V$ the image of $U$ under $\pi$,
and by $F$ a smooth fiber of $\pi$. 
We define $Z',U',V'$
and $F'$ on $S'$ similarly.

Suppose that there is an isomorphism $\varphi \colon S\to S'$. Then, since $S$ and $S'$ have a unique elliptic fibration 
(see \S \ref{subsec:automorphism_eliiptic_surface}),
it induces  an automorphism $\varphi _C$ of $C$ which satisfies $\varphi_C\circ \pi=\pi'\circ \varphi$. 
Moreover, an isomorphism $\varphi_U\colon U\to U'$ extends to an isomorphism $\varphi \colon S\to S'$
by Claim \ref{cla:isom_UVSC}.
In particular, we have 
$$
\Aut S\cong \Aut U, \quad \Aut_CS\cong\Aut_VU.
$$

\subsection{Autoequivalences associated with reducible fibers}\label{subsec:reducible}
In this subsection, we show that every autoequivalence of $D(S)$ induces  an autoequivalence of $D_Z(S)$.

The following is crucial to show Proposition \ref{prop:Z}, the 
main result in \S\ref{subsec:reducible}.


\begin{lem}\label{lem:support}
Take a point $x\in S$.  
\begin{enumerate}
\item
Suppose that there is an integer $i$ such that $H^i(\Phi (\mathcal{O}_x))\ne 0$ and  
$$c_1(H^i(\Phi(\mathcal{O}_x)))\cdot c_1(H^i(\Phi(\mathcal{O}_x)))=0.$$ 
Then $\Phi (\mathcal{O}_x)$ is a shift of a sheaf.
\item
If $x\in Z$, then  
$\Supp \Phi (\mathcal{O}_x)$ is contained in $Z'$.
\end{enumerate}
\end{lem}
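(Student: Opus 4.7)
My strategy combines the geometric constraint that $\Supp\Phi(\mc O_x)$ lies in a single fiber of $\pi'$ (Claim \ref{cla:FM_support}) with the negative semidefiniteness of the intersection form on divisors supported in such a fiber. For any $y\in S$, each cohomology sheaf $H^k:=H^k(\Phi(\mc O_y))$ has support of dimension at most one inside a single fiber $F'_0$ of $\pi'$, hence has rank zero on $S'$ with $c_1(H^k)$ an effective combination of the irreducible components of $F'_0$. By Zariski's lemma the intersection form restricted to such divisors is negative semidefinite with radical $\Q\cdot[F'_0]$, so $c_1(H^k)^2\le 0$, with equality if and only if $c_1(H^k)\in\Q_{\ge 0}\cdot[F'_0]$.

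For part (i), the hypothesis $c_1(H^i)^2=0$ then forces $c_1(H^i)=m_i[F'_0]$ for some $m_i\ge 0$. Since $\Phi$ preserves the Euler form and $\chi(\mc O_x,\mc O_x)=0$ on a surface, the formula \eqref{eqn:euler} yields $c_1(\Phi(\mc O_x))^2=0$; expanding $c_1(\Phi(\mc O_x))=\sum_k(-1)^k c_1(H^k)$ and using that $c_1(H^i)\cdot c_1(H^k)=m_i[F'_0]\cdot c_1(H^k)=0$ for every $k$ (fiber classes annihilate every fiber-supported divisor), this reduces to
\[
0=\Big(\sum_{k\ne i}(-1)^k c_1(H^k)\Big)^{\!2}.
\]
The main obstacle is the last step: concluding that $H^k=0$ for all $k\ne i$, so that $\Phi(\mc O_x)$ is a shift of a single sheaf. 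My plan is to consider the truncation triangle $\tau^{<j}\Phi(\mc O_x)\to\Phi(\mc O_x)\to H^j[-j]$ for the largest $j$ with $H^j\ne 0$, and then to use the simplicity of $\Phi(\mc O_x)$ (so $\Hom^0=\C$) together with its $\omega_{S'}$-invariance (inherited from $\mc O_x\otimes\omega_S\cong\mc O_x$) to force $\tau^{<j}\Phi(\mc O_x)=0$.

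For part (ii), suppose for contradiction that $F'_0$ is irreducible. Then every divisor supported on $F'_0$ is a rational multiple of $[F'_0]$, so $c_1(H^k)^2=0$ automatically for every nonzero cohomology sheaf, and (i) gives $\Phi(\mc O_x)\cong\mc F[n]$ for a sheaf $\mc F$ on $F'_0$. By Claim \ref{cla:FM_support} and its analogue for $\Phi^{-1}$, the assignment $y\mapsto\pi'(\Supp\Phi(\mc O_y))$ factors through $\pi$ to yield an automorphism $\psi_C\in\Aut C$ (as in Claim \ref{cla:isom_UVSC}), and Lemma \ref{lem:Z} then provides a restricted equivalence $\Phi\colon D_{F_c}(S)\xrightarrow{\sim}D_{F'_0}(S')$ with $c=\pi(x)$. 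However, since $x\in Z$ the fiber $F_c$ is reducible with $n\ge 2$ irreducible components whose classes are linearly independent in $H^2(S,\Q)$, so the rational Grothendieck group $K(D_{F_c}(S))\otimes\Q$ has rank at least $n+1\ge 3$ (accounting for $n$ curve-class directions and one $\ch_2$-direction from $[\mc O_x]$), whereas for the irreducible $F'_0$ the corresponding rank is only $2$. This contradicts the equivalence and completes the proof.
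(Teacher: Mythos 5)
Your proposal diverges from the paper's argument in both parts, and in both parts there is a genuine gap.

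For part (i), the paper's route is short: by \eqref{eqn:euler} the hypothesis gives $\chi(H^i,H^i)=0$, hence $\dim\Ext^1_{S'}(H^i,H^i)\ge 2$, and then \cite[Lemma 2.9]{BM01} (which exploits the constraint $\Ext^1(\Phi(\mc{O}_x),\Phi(\mc{O}_x))\cong\Ext^1(\mc{O}_x,\mc{O}_x)\cong\C^2$ via the spectral sequence relating $\Ext^*(\Phi(\mc{O}_x),\Phi(\mc{O}_x))$ to the $\Ext$'s between the cohomology sheaves) forces all other cohomology sheaves to vanish. Your $c_1$-bookkeeping cannot reach this conclusion: the identity $0=\bigl(\sum_{k\ne i}(-1)^k c_1(H^k)\bigr)^2$ carries no information, since every fiber-supported class has square $\le 0$ and a sheaf can perfectly well have $c_1=0$ (a skyscraper) or the alternating sum can vanish without each term vanishing. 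You acknowledge this and substitute a ``plan'' (truncation triangle plus simplicity plus $\omega_{S'}$-invariance), but simplicity alone does not force a complex to have one cohomology sheaf --- there are simple two-term complexes --- and the missing ingredient is precisely the bound $\dim\Ext^1=2$ that the paper's citation supplies. As written, part (i) is not proved.

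For part (ii), the paper argues on $\dim\Supp\mc{P}$: when it is $2$ the kernel induces an isomorphism $S\to S'$ carrying $(-2)$-curves to $(-2)$-curves; when it is $3$ one composes with $(\Phi^{\mc{U}}_{J_{S'}(a,b)\to S'})^{-1}$ to reduce to the $2$-dimensional case and uses Remark \ref{rem:fibers}. Your alternative has two problems. First, the restricted equivalence $D_{F_c}(S)\simeq D_{F'_0}(S')$ requires, via Lemma \ref{lem:Z}, that $\Supp\Phi(\mc{O}_y)\subset F'_0$ for \emph{every} $y\in F_c$ and $\Supp\Phi^{-1}(\mc{O}_w)\subset F_c$ for \emph{every} $w\in F'_0$; Claim \ref{cla:FM_support} only gives disjointness for points in \emph{different} fibers and says nothing about two points of the same fiber landing in the same target fiber. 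That uniformity is exactly the content of Lemma \ref{lem:fiber_product} (the map $\delta$), whose proof relies on Corollary \ref{cor:UU} and Proposition \ref{prop:noreducible}, which in turn use Lemma \ref{lem:support}(ii) --- so invoking it here is circular. Second, the rank count is wrong as stated: $K(D_{F'_0}(S'))\otimes\Q$ is not of rank $2$ (for a smooth elliptic fiber it contains $\Pic^0(F'_0)\otimes\Q$, an infinite-dimensional $\Q$-vector space), and similarly on the other side. The comparison only becomes meaningful after passing to numerical classes or to the images under the Mukai vector in $H^*(\,\cdot\,,\Q)$, which you would then have to show are intertwined by the equivalence. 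The underlying idea (a reducible fiber supports ``more'' numerical classes than an irreducible one) is a reasonable heuristic, but the proof as given does not close.
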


\begin{proof}
(i)  By the equation \eqref{eqn:euler},
we have 
$$
\chi(H^i(\Phi(\mathcal{O}_x)),H^i(\Phi(\mathcal{O}_x)))=-c_1(H^i(\Phi(\mathcal{O}_x)))\cdot c_1(H^i(\Phi(\mathcal{O}_x)))=0,
$$
and hence
$$
\dim \Ext^1_{S'}(H^i(\Phi(\mathcal{O}_x)),H^i(\Phi(\mathcal{O}_x)))\ge 2.
$$
But then 
\cite[Lemma 2.9]{BM01} implies that $i$ is a unique 
integer such that $H^i(\Phi(\mathcal{O}_x))\ne 0$. Namely 
$\Phi(\mathcal{O}_x)$ is a shift of sheaf.

(ii) First  of all, we note that $\Phi(\mathcal{O}_y)$ is simple, because so is $\mathcal{O}_y$.
First let us consider the case $\dim \Supp \mathcal{P}=2$.
Then there is an irreducible component $W$ of $\Supp \mathcal{P}$
such that the restrictions $p|_{W}\colon W\to S$, $p'|_{W}\colon W\to S$ of projections $p,p'$ are birational morphism 
(see the proof of \cite[Theorem 2.3]{Ka02}). We put 
$$
q:=p'|_W\circ p|_W^{-1}\colon S\dashrightarrow S'.
$$ 
But as Kawamata pointed out in \cite[Lemma 4.2]{Ka02}, 
$q$ is isomorphic in codimension $1$, 
and hence in the surface case, it is an isomorphism by \cite[Ch.5, Lemma 5.1]{Ha77}.
Therefore, if a $(-2)$-curve $G$ contains the point $x$, 
the $(-2)$-curve $q(G)$ contains the point $q(x)\in\Supp \Phi (\mathcal{O}_x)$.
Because each $(-2)$-curve on $S'$ and 
$\Supp \Phi (\mathcal{O}_x)$
are always contained in a single fiber of $\pi'$,   
$\Supp \Phi (\mathcal{O}_x)$ is contained in the set $Z'$.
(Moreover we can see by \cite[Lemma 4.5]{Hu06} that $\Phi (\mathcal{O}_x)=\mc{O}_{q(x)}$.
We use this remark below.)

Next let us consider the case  $\dim \Supp \mathcal{P}=3$.
Suppose that $\Phi(\mathcal{O}_x)$ is a sheaf on $S'$, after replacing $\Phi$ with $\Phi\circ [n]$ for some $n\in \Z$.
Take a point $y$, which is sufficiently near the point $x$, but not in $Z$.  
Then $\Phi(\mathcal{O}_y)$ is also a sheaf by Lemma \ref{lem:FMfamily}, and
both of $\Phi(\mathcal{O}_x)$ and $\Phi(\mathcal{O}_y)$ are $1$-dimensional
by the assumption $\dim \Supp \mathcal{P}=3$.
Claim \ref{cla:FM_support} implies that we may assume 
that $\Phi(\mathcal{O}_y)$ is a sheaf on a smooth elliptic curve $F'$.
It follows from \cite[Lemma 4.8]{IU05} that $\Phi(\mathcal{O}_y)$ is a $1$-dimensional simple $\mc{O}_{F'}$-module, and hence,
it is a locally free sheaf on a smooth elliptic curve $F'$. Then, it is known to be stable by \cite[Remark 3.4]{Br98}. 
Denote the 
Chern class of the stable sheaf $\Phi(\mathcal{O}_y)$ on the fiber $F'$ by $(0,aF',-b)$ for some integer $a,b$. Then we know that 
$(a\lambda_{S'},b)=1$ (see the proof of \cite[Proposition 4.4]{BM01}), and hence there is  
the elliptic surface $J_{S'}(a,b)\to C$ together with the universal sheaf $\mathcal{U}$ on $J_{S'}(a,b)\times S'$.
For the point $w\in J_{S'}(a,b)$ representing the stable sheaf 
$\Phi(\mathcal{O}_y)$, we have
$$
\mathcal{O}_w\cong (\Phi^{\mathcal{U}}_{J_{S'}(a,b)\to S'})^{-1}\circ\Phi(\mathcal{O}_y).
$$
It follows that the kernel of the Fourier--Mukai transform 
$(\Phi^{\mathcal{U}}_{J_{S'}(a,b)\to S'})^{-1}\circ\Phi$ has a
$2$-dimensional support.
Now, we can apply the case $\dim \Supp\mathcal{P}=2$ of the Lemma to see that 
there is a point $z$, contained in a $(-2)$-curve on $J_{S'}(a,b)$, such that 
$$
\mathcal{O}_z=(\Phi^{\mathcal{U}}_{J_{S'}(a,b)\to S'})^{-1}\circ\Phi(\mathcal{O}_x).
$$
By Remark \ref{rem:fibers}, 
$\Supp\Phi^{\mathcal{U}}_{J_{S'}(a,b)\to S'}(\mathcal{O}_z)$  is contained in the set $Z'$.
Hence, so is $\Supp \Phi (\mathcal{O}_x)$.

Finally, suppose that  $\dim \Supp \mathcal{P}=3$ and
$\Phi(\mathcal{O}_x)$ is not a shift of a sheaf.
Take an integer $i$ such that $H^i(\Phi(\mathcal{O}_x))$ is non-zero.
Then the conclusion in (i) says that 
$c_1(H^i(\Phi(\mathcal{O}_x)))$ is not a multiple of a fiber $F'$. 
Because $H^i(\Phi(\mathcal{O}_x))$ is contained in a single fiber,
this means that $\Supp H^i(\Phi(\mathcal{O}_x))$ is contained in a reducible fiber, and hence in $Z'$. 
\end{proof}

Note that, if $\Supp\Phi (\mathcal{O}_x)$ is contained in an irreducible fiber of $\pi'$, the assumption in Lemma \ref{lem:support} (i) 
is satisfied by the equation \eqref{eqn:euler}.
 
Applying Lemmas \ref{lem:Z} and \ref{lem:support} (ii),
we obtain the following.


\begin{prop}\label{prop:Z}
There is a natural group homomorphism 
$$
\iota_Z:\Auteq D(S)\to \Auteq D_Z(S).
$$
\end{prop}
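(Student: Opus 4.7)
The plan is to define $\iota_Z$ by restricting each autoequivalence $\Phi$ of $D(S)$ to the full subcategory $D_Z(S)$ of objects supported on $Z$. The technical content has already been packaged in Lemmas \ref{lem:Z} and \ref{lem:support} (ii); what remains is essentially an assembly.

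First, given $\Phi \in \Auteq D(S)$ with quasi-inverse $\Phi^{-1}$, I would apply Lemma \ref{lem:support} (ii) to both $\Phi$ and $\Phi^{-1}$, specialised to the case $S' = S$ (so that $Z' = Z$). Both functors are Fourier--Mukai transforms by Orlov's representability theorem, and each is the quasi-inverse of the other. This yields, for every point $x \in Z$, the inclusions $\Supp \Phi(\mc{O}_x) \subset Z$ and $\Supp \Phi^{-1}(\mc{O}_x) \subset Z$. These are precisely the hypotheses required to invoke Lemma \ref{lem:Z} with $X = Y = S$ and $W = Z$, which produces a restricted Fourier--Mukai transform $\Phi|_{D_Z(S)} \colon D_Z(S) \to D_Z(S)$ that is itself an equivalence. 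I then set $\iota_Z(\Phi) := \Phi|_{D_Z(S)}$.

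Next, I would verify that the assignment $\Phi \mapsto \iota_Z(\Phi)$ is a group homomorphism. Since $D_Z(S)$ is a full triangulated subcategory of $D(S)$, the restriction of the identity autoequivalence is the identity on $D_Z(S)$, and the restriction of a composite $\Phi_1 \circ \Phi_2$ coincides with the composite of the restrictions. Moreover, the map is well-defined on isomorphism classes because any natural isomorphism of autoequivalences of $D(S)$ restricts to a natural isomorphism of the corresponding autoequivalences of $D_Z(S)$.

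I do not expect a substantive obstacle: Lemma \ref{lem:support} (ii) supplies the key support-preserving statement, Lemma \ref{lem:Z} converts it into a restriction equivalence, and the homomorphism property is then a formal consequence of $D_Z(S) \hookrightarrow D(S)$ being a full subcategory. The only point worth being careful about is simultaneously applying Lemma \ref{lem:support} (ii) to $\Phi^{-1}$, which is needed to feed both hypotheses of Lemma \ref{lem:Z}; this is automatic in the autoequivalence setting but would fail for a general Fourier--Mukai transform between distinct surfaces.
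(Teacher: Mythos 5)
Your proposal is correct and follows exactly the route the paper takes: the paper's entire proof is the single sentence ``Applying Lemmas \ref{lem:Z} and \ref{lem:support} (ii), we obtain the following,'' and your assembly --- applying Lemma \ref{lem:support} (ii) to both $\Phi$ and $\Phi^{-1}$ with $S'=S$, feeding both hypotheses of Lemma \ref{lem:Z}, and checking the homomorphism property formally from fullness of $D_Z(S)\hookrightarrow D(S)$ --- is precisely the intended unpacking of that sentence.
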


Let us define
$$
\Auteq ^\dagger D_Z(S):=\Image \iota_Z.
$$
 
The following is used in the proof of Lemma  \ref{lem:fiber_product}.  
\begin{cor}\label{cor:UU}
Take a point $x\in U$. Then we have $$\Supp \Phi (\mc{O}_x)\subset U'.$$
\end{cor}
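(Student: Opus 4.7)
The plan is to prove the contrapositive: if some point of $\Supp\Phi(\mc{O}_x)$ lies in $Z'$, then $x$ itself must lie in $Z$. The crucial input is Lemma~\ref{lem:support}(ii) applied to the quasi-inverse $\Psi := \Phi^{-1}$, which is also a Fourier--Mukai transform between the elliptic surfaces $S'$ and $S$. That lemma gives the symmetric statement: for any point $y \in Z'$, $\Supp \Psi(\mc{O}_y) \subset Z$.

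To exploit this, I would take a point $x \in U$ and assume for contradiction that there exists $y \in \Supp \Phi(\mc{O}_x) \cap Z'$. By Lemma~\ref{lem:BM02} applied to $\Phi(\mc{O}_x) \in D(S')$ (which has compact support since $\mc{O}_x$ does and $\Phi$ preserves this property), the membership $y \in \Supp \Phi(\mc{O}_x)$ is equivalent to $\RHom_{D(S')}(\Phi(\mc{O}_x), \mc{O}_y) \neq 0$. By adjunction (using that $\Psi$ is both a left and right adjoint to $\Phi$), this is isomorphic to $\RHom_{D(S)}(\mc{O}_x, \Psi(\mc{O}_y))$, which is therefore also non-zero. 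Invoking Lemma~\ref{lem:BM02} once more, we conclude $x \in \Supp \Psi(\mc{O}_y)$.

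Combining this with $\Supp \Psi(\mc{O}_y) \subset Z$ from the quasi-inverse application of Lemma~\ref{lem:support}(ii), we obtain $x \in Z$, contradicting $x \in U = S \setminus Z$. Hence $\Supp \Phi(\mc{O}_x) \cap Z' = \emptyset$, i.e.\ $\Supp \Phi(\mc{O}_x) \subset U'$, which is exactly the claim.

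There is no real obstacle here: the argument is a direct symmetric application of the already-established Lemma~\ref{lem:support}(ii) together with the standard characterization of support via $\RHom$ into skyscrapers (Lemma~\ref{lem:BM02}) and adjunction. The only point to check is that Lemma~\ref{lem:support}(ii) genuinely applies to $\Psi$, but this is immediate since both $S$ and $S'$ are projective elliptic surfaces with non-zero Kodaira dimension (see the setup in \S\ref{subsec:general}), so the roles of source and target may be interchanged freely.
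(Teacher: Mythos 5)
Your argument is correct and is essentially identical to the paper's own proof: both apply Lemma \ref{lem:support} (ii) to the quasi-inverse $\Phi^{-1}$, use Lemma \ref{lem:BM02} to translate support membership into non-vanishing of $\RHom$ into skyscrapers, and pass through the adjunction $\RHom(\Phi(\mc{O}_x),\mc{O}_y)\cong\RHom(\mc{O}_x,\Phi^{-1}(\mc{O}_y))$. The only cosmetic difference is that you phrase it as a proof by contradiction while the paper argues directly that $\RHom(\Phi(\mc{O}_x),\mc{O}_y)=0$ for every $y\in Z'$.
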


\begin{proof}
We use Lemma \ref{lem:BM02}.
Take any point $y\in Z'$. Then we have
$$\RHom (\Phi (\mc{O}_x),\mc{O}_y)\cong \RHom (\mc{O}_x,\Phi^{-1}(\mc{O}_y))=0$$
 by Lemma \ref{lem:support} (ii).
This implies the assertion.
\end{proof}


\subsection{Autoequivalences associated with  irreducible fibers: The classification}\label{subsec:non-reducible}

We begin \S\ref{subsec:non-reducible} by classifying Fourier--Mukai transforms between 
elliptic surfaces without reducible fibers.  


\begin{prop}\label{prop:noreducible}
Let 
$
\pi\colon S\to C$  and $\pi'\colon S'\to C$  
be elliptic surfaces without reducible fibers. Here we do not assume that $C$ is projective.
Let  $\Phi=\Phi^{\mathcal{P}}_{S\to S'}$ be 
a Fourier--Mukai transform 
 over $C$ such that $\dim \Supp\mathcal{P}=2$ or $3$.
\begin{enumerate}
\item
The object $\mathcal{P}\in D(S\times_C S')$ 
is a shift of a sheaf, flat over $S$ by the first projection.
\item
The following are equivalent.
\begin{enumerate}
\item
$\dim \Supp \mathcal{P}=2$.
\item
There are points $x\in S,y\in S'$
such that 
$\Phi (\mathcal{O}_x)\cong \mathcal{O}_y$.
\item There is a line bundle $\mathcal{L}$ on $S$ and 
an isomorphism $\varphi\colon S\to S'$ over $C$ such that 
$\Phi\cong \varphi_*((-) \otimes \mathcal{L})$.
\end{enumerate}
\item
Suppose that 
$\dim \Supp \mathcal{P}=3$.
Then there are integers $a,b$ with $(a\lambda_{S'},b)=1$,
a universal sheaf $\mathcal{U}$ on $S'\times J_{S'}(a,b)$ and isomorphism 
$\phi\colon J_{S'}(a,b)\to S$ over $C$
such that 
$$
\Phi\cong \Phi^{\mathcal{U}}_{J_{S'}(a,b)\to S'}\circ \phi^*.
$$
\end{enumerate}
\end{prop}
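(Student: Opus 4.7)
My plan is to prove (ii) first, deduce (iii) from (ii) by composing with a universal-sheaf transform, and read off (i) from the explicit kernels produced.

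For (ii), the implication (c) $\Rightarrow$ (b) is immediate because $\owe_x\otimes\mc{L}\cong\owe_x$ for any line bundle $\mc{L}$. For (b) $\Rightarrow$ (a), suppose $\Phi(\owe_x)\cong\owe_y$; the Chern character of $\Phi(\owe_{x'})$ is independent of $x'\in S$, while the argument in the $\dim\Supp\mc{P}=3$ branch of the proof of Lemma \ref{lem:support}(ii) forces, for generic $x'$, a non-zero fiber component in $\ch(\Phi(\owe_{x'}))$; this conflicts with $\Phi(\owe_x)\cong\owe_y$, so $\dim\Supp\mc{P}=2$. The substantial direction (a) $\Rightarrow$ (c) follows the template of the $\dim\Supp\mc{P}=2$ case of the proof of Lemma \ref{lem:support}(ii): one identifies an irreducible component $W\subset\Supp\mc{P}$ such that both projections $p|_W\colon W\to S$ and $p'|_W\colon W\to S'$ are birational (cf.~the proof of \cite[Theorem 2.3]{Ka02}), producing a birational map $q = p'|_W\circ(p|_W)^{-1}\colon S\dashrightarrow S'$. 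By \cite[Lemma 4.2]{Ka02} this $q$ is isomorphic in codimension one, and \cite[Ch.\,V, Lemma 5.1]{Ha77} upgrades it to an honest isomorphism $\phi\colon S\to S'$, which is automatically over $C$ since the whole construction is relative (cf.~Claim \ref{cla:isom_UVSC}). Arguing as in the cited proof, $\Phi(\owe_x)\cong\owe_{\phi(x)}$ for every $x\in S$, so Lemma \ref{lem:birational} yields $\Phi\cong\phi_*((-)\otimes\mc{L})[n]$; cohomological degree forces $n=0$.

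For (iii), assume $\dim\Supp\mc{P}=3$. After shifting $\Phi$, Lemma \ref{lem:FMfamily} and the argument in the $\dim\Supp\mc{P}=3$ branch of the proof of Lemma \ref{lem:support}(ii) show that for generic $y\in S$ the image $\Phi(\owe_y)$ is a simple pure one-dimensional sheaf supported on a single smooth fiber $F'$ of $\pi'$, hence locally free on $F'$ and stable by \cite[Remark 3.4]{Br98}. Writing its Chern class as $(0,aF',-b)$ with $a>0$, the coprimality $(a\lambda_{S'},b)=1$ follows as in the proof of \cite[Proposition 4.4]{BM01}. Theorem \ref{thm:SLFM} then supplies a universal sheaf $\mc{U}$ on $J_{S'}(a,b)\times S'$ and a relative equivalence $\Phi^{\mc{U}}_{J_{S'}(a,b)\to S'}$ over $C$. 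The composition $\Psi := (\Phi^{\mc{U}}_{J_{S'}(a,b)\to S'})^{-1}\circ\Phi\colon D(S)\to D(J_{S'}(a,b))$ is a relative Fourier--Mukai transform sending $\owe_y$ to a skyscraper, so part (ii) applied to $\Psi$ gives $\Psi\cong\phi'_*((-)\otimes\mc{L})$ for some isomorphism $\phi'\colon S\to J_{S'}(a,b)$ over $C$ and some $\mc{L}\in\Pic S$. Setting $\phi := (\phi')^{-1}\colon J_{S'}(a,b)\to S$ and absorbing the line-bundle twist into the modification $\mc{U}\otimes p_1^*(\phi'_*\mc{L})$ of $\mc{U}$, which remains a universal sheaf on $J_{S'}(a,b)\times S'$, produces the stated factorization $\Phi\cong\Phi^{\mc{U}}_{J_{S'}(a,b)\to S'}\circ\phi^*$.

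Statement (i) is then automatic from these explicit forms: in case (ii)(c) the kernel is the pushforward of $\mc{L}$ along the graph of $\phi$, a sheaf flat over $S$; in case (iii) the kernel is $(\phi\times\id_{S'})_*\mc{U}$, a sheaf flat over $S$ because $\mc{U}$ is flat over $J_{S'}(a,b)$ and $\phi$ is an isomorphism. The principal obstacle I anticipate is (a) $\Rightarrow$ (c): extracting the birational map $q$ from an irreducible component of $\Supp\mc{P}$ and verifying that its extension to an honest isomorphism respects both fibrations over $C$ in the relative, possibly non-projective, setting. The surface-theoretic input from \cite[Ch.\,V, Lemma 5.1]{Ha77} is standard, but propagating it through the relative structure requires a careful argument parallel to Claim \ref{cla:isom_UVSC}; once this is settled, the remaining reductions are essentially formal consequences of the Fourier--Mukai toolkit and Bridgeland's construction of $J_{S'}(a,b)$.
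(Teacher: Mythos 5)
Your overall architecture inverts the paper's, and this inversion creates the two real problems. The paper proves (i) \emph{first}: since the fibers of $\pi'$ are irreducible, Lemma \ref{lem:support}~(i) shows every $\Phi(\mathcal{O}_x)$ is a shift of a sheaf, and then \cite[Lemma 4.3]{Br99} gives directly that $\mathcal{P}$ is a shift of a sheaf, flat over $S$. Flatness makes $\dim\Supp \Phi(\mathcal{O}_x)=\dim\Supp\mathcal{P}|_{x\times S'}$ independent of $x$, so under (1) or (2) \emph{every} $\Phi(\mathcal{O}_x)$ has finite support and Lemma \ref{lem:birational} gives (3) at once. You instead try to recover (i) at the very end from the explicit shapes of the kernels produced in (ii)(c) and (iii); but identifying the given kernel $\mathcal{P}$ with those explicit objects requires knowing that two relative integral functors over a non-projective base which are isomorphic as functors have isomorphic kernels. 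Orlov's uniqueness theorem, as quoted in the paper, is for smooth \emph{projective} varieties, and no relative analogue is established here --- so statement (i), which is about the object $\mathcal{P}$ itself, is not actually proved by your argument.

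The second gap is in (a)$\Rightarrow$(c). The Kawamata-style extraction of a component $W\subset\Supp\mathcal{P}$ with both projections birational, the codimension-one statement of \cite[Lemma 4.2]{Ka02}, and the passage through \cite[Ch.~V, Lemma 5.1]{Ha77} are arguments carried out in the paper for projective surfaces; the proposition explicitly drops projectivity of $C$ (hence of $S$ and $S'$), and you flag this as ``the principal obstacle'' without closing it. A related projectivity issue affects your (b)$\Rightarrow$(a) step: on a connected non-compact surface the class of a point in $H^4(S',\mathbb{Q})$ vanishes, so ``the Chern character of $\Phi(\mathcal{O}_{x'})$ is independent of $x'$'' is either vacuous or needs a compactly-supported (Borel--Moore) formulation together with a relative Grothendieck--Riemann--Roch statement that you have not supplied. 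All of these difficulties evaporate once (i) is in hand, which is exactly why the paper establishes it first via \cite[Lemma 4.3]{Br99}. Your part (iii) is essentially the paper's argument (generic stable fiber sheaf, coprimality, composition with $(\Phi^{\mathcal{U}}_{J_{S'}(a,b)\to S'})^{-1}$, absorbing the line-bundle twist into $\mathcal{U}$) and is fine once (ii) is repaired.
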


\begin{proof}
(i)
Take any point $x\in S$.
By the irreducibility of fibers of $\pi'$ and Lemma \ref{lem:support} (i), 
the object $\Phi(\mathcal{O}_x)$ is a shift of a sheaf. 
Hence, \cite[Lemma 4.3]{Br99} 
 implies that $\mathcal{P}$ is a shift of a sheaf, flat over $S$ by the first projection.

(ii)
As a consequence of (i), the dimension of the support of 
$\mathcal{P}|_{x\times S'}\cong\Phi(\mathcal{O}_x)$ 
does not depends on the choice of a point $x\in S$. 
Hence, in the situation (1) or (2),
every $\Phi (\mathcal{O}_x)$ has a finite support.
Then we get (3) by Lemma \ref{lem:birational}. Here, recall that 
$\mathcal{P}$ is a sheaf on $S\times_C S'$, and hence
$\varphi$ is defined over $C$.
Obviously, (3) implies (1) and (2).

(iii)
The proof goes parallel to that of \cite[Proposition 4.4]{BM01}
(see also the proof of Lemma \ref{lem:support}).
Let us take a general point $x\in S$, and denote the 
Chern class of the stable sheaf $\Phi(\mathcal{O}_x)$ on a smooth fiber $F'$
by $(0,aF',-b)$ 
for some integers $a,b$. Then we know that 
$(a\lambda_{S'},b)=1$, and hence 
we can define
an elliptic surface $J_{S'}(a,b)\to C$ and a universal sheaf $\mathcal{U}$ on $S'\times J_{S'}(a,b)$.
For the point $y\in J_{S'}(a,b)$ representing a stable sheaf 
$\Phi(\mathcal{O}_x)$,
it is satisfied that 
$$
\Phi^{-1}\circ \Phi^{\mathcal{U}}_{J_{S'}(a,b)\to S'}(\mathcal{O}_y)\cong \mathcal{O}_x.
$$
Apply Lemma \ref{lem:birational}, and replace a universal 
bundle $\mathcal{U}$ with $\mathcal{U}\otimes p_{J_{S'}(a,b)}^*\mathcal{L}$
for a line bundle $\mathcal{L}$ on $J_
{S'}(a,b)$ if necessary,
then we obtain the assertion.
\end{proof}

If $\pi$ has a reducible fiber, the implication from (2) to (3) 
in Proposition \ref{prop:noreducible} (ii) fails because of the existence of twist functors associated to $(-2)$-curves.

Now we can prove the following important observation.


\begin{lem}\label{lem:fiber_product}
In the notation of \S\ref{subsec:general},  
there is an automorphism $\delta(\Phi)\in \Aut C$ satisfying 
$\delta(\Phi)(V)=V'$
such that $\mathcal{P}|_{U\times U'}$ is a shift of a coherent sheaf on $U_{\delta(\Phi)}\times_{V'}U'$.
Here $U_{\delta(\Phi)}\times_{V'}U'$ is the fiber product of
$U$ and $U'$ over $V'$ via the morphisms
$(\delta({\Phi})\circ \pi)|_U$ and $\pi'|_{U'}$.
\end{lem}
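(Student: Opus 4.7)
The plan is to produce $\delta(\Phi)$ by tracking how $\Phi$ permutes fibers of $\pi$ over $V$ and fibers of $\pi'$ over $V'$, then to derive the sheaf statement by invoking Proposition \ref{prop:noreducible}(i) in the relative setting over $V'$.

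First I would construct $\delta(\Phi)$. By Corollary \ref{cor:UU} and Claim \ref{cla:FM_support}, for any $x\in U$ the support $\Supp\Phi(\mc{O}_x)$ is contained in $U'$ and lies in a single fiber of $\pi'$; this defines a candidate point $\delta_0(\pi(x))\in V'$. To see that $\delta_0$ depends only on $\pi(x)$, I take two points $x,y$ in a common smooth fiber $F=\pi^{-1}(c)$ and apply Lemma \ref{lem:base_change} to the relative integral functor with kernel $\mc{P}|_{F\times S'}$; the resulting support information, together with the irreducibility of $F$, forces the set-theoretic assignment $t\mapsto \pi'(\Supp\Phi(\mc{O}_t))$ to be constant on $F$. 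The analogous construction for $\Phi^{-1}$ provides a set-theoretic inverse, so $\delta_0\colon V\to V'$ is bijective; that it is a morphism follows from a family version of the same argument, using the scheme-theoretic support of $\mc{P}|_{U\times U'}$. Since $C$ is a smooth projective curve and $V,V'$ are dense open subsets, $\delta_0$ extends uniquely to an automorphism $\delta(\Phi)\in\Aut C$ with $\delta(\Phi)(V)=V'$.

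For the sheaf statement, by the construction of $\delta(\Phi)$ we have $\Supp\mc{P}|_{U\times U'}\subset U_{\delta(\Phi)}\times_{V'}U'$, so $\mc{P}|_{U\times U'}$ is naturally a bounded coherent object on this fiber product. I would then verify that $\mc{P}|_{U\times U'}$ is the kernel of a Fourier--Mukai transform $\Phi_U\colon D(U)\to D(U')$ relative to $V'$: an argument parallel to Lemma \ref{lem:Z}, combined with Corollary \ref{cor:UU} applied to both $\Phi$ and its quasi-inverse, shows that all relevant supports stay inside $U$ and $U'$, so the restriction of $\Phi$ descends to an equivalence on the complementary open parts with kernel $\mc{P}|_{U\times U'}$. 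Applying Proposition \ref{prop:noreducible}(i) to $\Phi_U$---whose source and target are elliptic surfaces without reducible fibers over the common base $V'$---shows that $\mc{P}|_{U\times U'}$, viewed on $U_{\delta(\Phi)}\times_{V'}U'$, is a shift of a coherent sheaf, as required.

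The main obstacle is the promotion of the set-theoretic map $\delta_0$ to a genuine morphism $V\to V'$, since one must extract morphism-level data from the support behavior of $\mc{P}$. Closely related is the verification that $\mc{P}|_{U\times U'}$ defines an honest relative Fourier--Mukai transform between the open parts---this is morally the content of the forthcoming functor $\iota_U$ of Proposition \ref{prop:U}. Once both ingredients are in place, Proposition \ref{prop:noreducible}(i) delivers the sheaf conclusion at once.
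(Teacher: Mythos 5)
Your argument has a genuine gap at its central step: you pass from the support containment $\Supp\mathcal{P}|_{U\times U'}\subset U_{\delta(\Phi)}\times_{V'}U'$ to the assertion that $\mathcal{P}|_{U\times U'}$ ``is naturally a bounded coherent object on this fiber product.'' That inference is precisely the nontrivial content of the lemma, and it is false for general objects of the derived category: the remark immediately following Lemma \ref{lem:fiber_product} in the paper exhibits the kernel of the twist functor $T_{\mathcal{O}_G}$, which is supported on $S\times_C S$ (note $\delta(T_{\mathcal{O}_G})=\id_C$) but is \emph{not} an object of $D(S\times_C S)$. Even for a single sheaf, set-theoretic support inside a closed subscheme does not make it a module over that subscheme. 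Compounding this, your appeal to Proposition \ref{prop:noreducible}(i) is circular: that proposition is stated for a relative Fourier--Mukai transform over the base, so its hypothesis already places the kernel in the derived category of the fiber product --- exactly what you are trying to prove.

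The paper circumvents this by a case division on $\dim\Supp\mathcal{P}$. When $\dim\Supp\mathcal{P}=2$, it shows $\Phi(\mathcal{O}_x)$ is a skyscraper for every $x\in U$ and invokes \cite[Corollary 6.14]{Hu06} to identify $\mathcal{P}|_{U\times U'}$ with (a line bundle on) the graph of a morphism $\varphi_U\colon U\to U'$; the graph is by construction a closed subscheme of $U_{\varphi_C}\times_{V'}U'$, and $\varphi_C=\delta(\Phi)$ comes from Claim \ref{cla:isom_UVSC}. When $\dim\Supp\mathcal{P}=3$, the objects $\Phi(\mathcal{O}_x)$ are one-dimensional, so no morphism $U\to U'$ can be read off from supports; instead the paper composes $\Phi^{-1}$ with $\Phi^{\mathcal{U}}_{J_{S'}(a,b)\to S'}$, whose universal sheaf is already a sheaf on the fiber product, reducing to the two-dimensional case. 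Your set-theoretic $\delta_0$ and the promised ``family version'' upgrading it to a morphism would in any case have to pass through some such reduction, so as written the proposal does not close either the morphism step or, more seriously, the fiber-product step.
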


\begin{proof}
Below we freely compose $\Phi=\Phi^\mc{P}$ with a shift functor if necessary. 
First suppose that $\dim \Supp \mc{P}=2$.
Then for every $x\in U$, Corollary \ref{cor:UU}  implies that 
$\Supp\Phi(\mc{O}_x)$ is irreducible, and then, 
we obtain from Lemma \ref{lem:support} (ii) and  \cite[Lemma 4.3]{Br99} 
that $\mc{P}|_{U\times S'}$ is a sheaf, flat over $U$. 
In particular,  the sheaf $\Phi(\mc{O}_x)\cong\mc{P}|_{x\times U'}$ has finite support.
Then  \cite[Lemma 4.5]{Hu06} 
implies that there is a point $y\in U'$ such that
$\Phi(\mc{O}_x)\cong \mathcal{O}_y.$
%
Then \cite[Corollary 6.14]{Hu06} implies that there is a morphism $\varphi_U \colon  U\to U'$ 
satisfying
$
\Phi(\mc{O}_x)\cong \mc{O}_{\varphi_U(x)}.
$
Hence, we can apply Claim \ref{cla:isom_UVSC} 
to obtain the automorphism $\varphi _C$ of $C$.
Note that $\mathcal{P}|_{U\times U'}$ is the structure sheaf of the graph of the morphism  $\varphi_U$, and hence
a sheaf on $U_{\varphi_U}\times_{V'} U'$.
This $\varphi _C$ plays the role of $\delta({\Phi})$ in the assertion.

Next, we consider the case $\dim \Supp \mc{P}=3$.
By the same argument as in the proof of  
Proposition \ref{prop:noreducible}, 
there are integers $a,b$ with $(a\lambda_{S'},b)=1$
such that for every point $x\in U$, 
there is a point 
$y\in J_{U'}(a,b)$ satisfying
$$
\Phi^{-1}\circ \Phi^{\mathcal{U}}_{J_{S'}(a,b)\to S'}(\mathcal{O}_y)\cong \mathcal{O}_x.
$$
Note that we can regard $J_{U'}(a,b)$ 
as the inverse image of $V'$ by the elliptic fibration 
$J_{S'}(a,b)\to C$. 
Let us denote by $\mc {Q}\in D(J_{S'}(a,b)\times S)$ the kernel of 
$\Phi^{-1}\circ \Phi^{\mathcal{U}}_{J_{S'}(a,b)\to S'}$.
Since $\dim \Supp \mc{Q}=2$, we can
apply the above argument to $\mc{Q}$,
and
then we obtain the assertion for 
$\mc{Q}|_{J_{U'}(a,b)\times U}$.
Since   $\mathcal{U}|_{J_{U'}(a,b)\times U'}$ is a coherent sheaf on $J_{U'}(a,b)\times_{V'}U'$, we obtain the assertion for 
$\mc{P}|_{U\times U'}$.
\end{proof}

\begin{rem}
For a point $x\in S$, we put $c:= \delta(\Phi)(\pi(x))\in C.$
Furthermore, if the point $x$ belongs to $U$,  we know from the definition of $\delta$ that
$\Supp \Phi(\mathcal{O}_x)$ is contained in the fiber ${\pi'}^{-1}(c)$.
Recall the facts 
that $\Supp \Phi(\mathcal{O}_x)=\Supp\mathcal{P}\cap (x\times S')$ by (\ref{eqn:LiP}), 
 and  that $\Supp \Phi(\mathcal{O}_x)$ is contained in a single fiber.
These facts imply that $\Supp \Phi(\mathcal{O}_x)$ 
is contained in $\pi'^{-1}(c)$
for any $x\in S$.
Therefore, we conclude that $\mathcal{P}$ is an object of 
$D_{S_{\delta(\Phi)}\times_{C}S'}(S\times S')$. 

On the other hand,
$\mc{P}$ is not necessarily an object of $D(S_{\delta(\Phi)}\times_{C}S')$.
For instance, consider a twist functor $T_{\mc{O}_G}$ for a $(-2)$-curve $G$ on $S$.
Because the spherical object $T_{\mc{O}_G}(\mc{O}_G(2))$ is simple, the computation in Example \ref{exa:twist} (i) and 
\cite[Corollary 3.15]{Hu06} imply that it is not of a form $k_*\alpha$ 
for an object  $\alpha \in D(F_c)$, a fiber $F_c$ and the inclusion 
$k\colon F_c \hookrightarrow S$. Consequently, 
Lemma \ref{lem:base_change} (i) 
tells us that the kernel of $T_{\mc{O}_G}$ is not an object of 
$D(S\times_{C}S)$. Here note that $\delta (T_{\mc{O}_G})=\id _C$.
\end{rem}


\begin{prop}\label{prop:U}
In the notation of \S\ref{subsec:general}, assume furthermore
that $S=S'$. 
Then the Fourier--Mukai autoequivalence 
$\Phi=\Phi^\mc{P}$ of $D(S)$ 
induces a Fourier--Mukai autoequivalence 
$\iota_U(\Phi)$ of $D(U)$ over $V$, by restricting the kernel $\mc{P}$ to $U\times U$.  This $\iota_U$ defines a  
 group homomorphism 
$$
\iota_U\colon \Auteq D(S)\to \Auteq D(U)
$$
satisfying that
\begin{equation}\label{eqn:compatibility}
\mathbb{L} i^*\circ \Phi\cong \iota_U(\Phi)\circ \mathbb{L} i^*, 
\end{equation}
where  $i$ is 
the inclusion $U \hookrightarrow S$.
\end{prop}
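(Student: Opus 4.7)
The plan is to use Lemma~\ref{lem:fiber_product} to define $\iota_U(\Phi)$ as the relative integral functor over $V$ with kernel $\mc{P}|_{U\times U}$, to establish the compatibility \eqref{eqn:compatibility} via flat base change together with a support argument, and to deduce both the equivalence property of $\iota_U(\Phi)$ and the homomorphism property of $\iota_U$ from \eqref{eqn:compatibility}.

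By Lemma~\ref{lem:fiber_product}, $\mc{P}|_{U\times U}$ is a shift of a coherent sheaf supported on the closed subscheme $U_{\delta(\Phi)}\times_V U\subset U\times U$; since $\pi|_U\colon U\to V$ is proper, both projections from this fibre product to $U$ are proper, and $\mc{P}|_{U\times U}$ is automatically perfect because $U\times U$ is smooth. Hence $\iota_U(\Phi):=\Phi^{\mc{P}|_{U\times U}}_{U\to U}$ is well defined as a relative integral functor $D(U)\to D(U)$ over $V$. Applying Lemma~\ref{lem:support}~(ii) to the quasi-inverse $\Phi^{-1}$ gives $\Supp\Phi^{-1}(\mc{O}_y)\subset U$ for every $y\in U$, and by Lemma~\ref{lem:BM02}, for any $x\in Z$ and $y\in U$ we then have
$$
\RHom_{D(S)}(\Phi(\mc{O}_x),\mc{O}_y)\cong \RHom_{D(S)}(\mc{O}_x,\Phi^{-1}(\mc{O}_y))=0,
$$
so $y\notin\Supp\Phi(\mc{O}_x)$; thus $\Supp\mc{P}\cap(Z\times U)=\emptyset$, i.e.\ $\Supp\mc{P}|_{S\times U}$ lies in the open subset $U\times U\subset S\times U$.

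Flat base change along the Cartesian square formed by the open immersions $i\colon U\hookrightarrow S$ and $\id_S\times i\colon S\times U\hookrightarrow S\times S$ now yields
$$
\mathbb{L}i^*\Phi(E)\cong \mathbb{R}p_{S\times U,2*}\bigl(\mc{P}|_{S\times U}\Lotimes p_1^*E\bigr),
$$
and the support containment just established allows one to factor $\mc{P}|_{S\times U}$ through the closed immersion of its scheme-theoretic support $T\subset U\times U$; the projection formula then identifies the right-hand side with $\iota_U(\Phi)(\mathbb{L}i^*E)$, proving \eqref{eqn:compatibility}. Every coherent sheaf on the open $U\subset S$ extends to a coherent sheaf on $S$, so $\mathbb{L}i^*$ is essentially surjective on objects of $D(U)$. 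Hence for $\Phi_1,\Phi_2\in\Auteq D(S)$ the computation
$$
\iota_U(\Phi_1\circ\Phi_2)\circ\mathbb{L}i^*\cong\mathbb{L}i^*\circ(\Phi_1\circ\Phi_2)\cong\iota_U(\Phi_1)\circ\iota_U(\Phi_2)\circ\mathbb{L}i^*
$$
together with essential surjectivity yields $\iota_U(\Phi_1\circ\Phi_2)\cong\iota_U(\Phi_1)\circ\iota_U(\Phi_2)$; since obviously $\iota_U(\id_S)\cong\id_{D(U)}$, in particular $\iota_U(\Phi)\circ\iota_U(\Phi^{-1})\cong\id_{D(U)}$, so $\iota_U(\Phi)$ is an equivalence and $\iota_U$ is a group homomorphism.

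The main obstacle will be the middle step: carefully translating the set-theoretic support containment from Lemma~\ref{lem:support} into a genuine factoring of the pushforward through $U\times U$ rather than through $S\times U$. The remaining ingredients are either immediate from the preliminary lemmas or purely formal consequences of \eqref{eqn:compatibility}.
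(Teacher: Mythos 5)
Your proposal is correct and follows the same skeleton as the paper's (very terse) proof: well-definedness of $\iota_U(\Phi)$ as a relative integral functor over $V$ comes from Lemma \ref{lem:fiber_product}, and invertibility comes from $\iota_U(\Phi)\circ\iota_U(\Phi^{-1})\cong\iota_U(\id_S)\cong\id_{D(U)}$; you are essentially supplying the ``direct computation'' that the paper omits. Two sub-steps differ in route or need a small touch-up. First, for the support statement $\Supp\mathcal{P}\cap(Z\times U)=\emptyset$ you cite Lemma \ref{lem:support}~(ii) as giving $\Supp\Phi^{-1}(\mathcal{O}_y)\subset U$ for $y\in U$; that is literally Corollary \ref{cor:UU} applied to $\Phi^{-1}$ (itself deduced from Lemma \ref{lem:support}~(ii) by the same adjunction you then repeat), and in fact the detour is unnecessary: Lemma \ref{lem:support}~(ii) applied directly to $\Phi$ already says $\Supp\Phi(\mathcal{O}_x)\subset Z$ for $x\in Z$, which is exactly the disjointness you need. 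Second, you derive multiplicativity of $\iota_U$ from the intertwining relation \eqref{eqn:compatibility} together with essential surjectivity of $\mathbb{L}i^*$, whereas the paper's implicit argument is at the level of kernels (restriction to $U\times U$ commutes with convolution because of the support condition). Your route works, but note that essential surjectivity alone only gives an objectwise isomorphism $\iota_U(\Phi_1\circ\Phi_2)(\beta)\cong\iota_U(\Phi_1)\circ\iota_U(\Phi_2)(\beta)$; to upgrade this to an isomorphism of functors you should invoke that $\mathbb{L}i^*$ realizes $D(U)$ as the Verdier quotient $D(S)/D_Z(S)$, so that precomposition with it is fully faithful on functors. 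With these two small repairs the argument is complete.
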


\begin{proof}
The statement is a direct consequence of Lemma \ref{lem:fiber_product} and the isomorphisms 
$$
{\id}_U\cong  \iota_U(\Phi\circ \Phi^{-1}) 
\cong \iota_U(\Phi)\circ \iota_U(\Phi^{-1}).
$$ 
The isomorphism \eqref{eqn:compatibility} follows from 
a direct computation.
\end{proof}
Let us define 
$$
\Auteq^{\dagger} D(U):=\Image \iota_U.
$$
Note that the elements of $\Auteq^{\dagger} D(U)$ are classified in Proposition \ref{prop:noreducible}.


\begin{rem}\label{rem:image_delta}
Lemma \ref{lem:fiber_product} 
tells us that there is a group homomorphism 
\begin{equation*}\label{eqn:delta}
\delta\colon \Auteq D(S)\to \Aut C. 
\end{equation*}
The map $\delta$ factors through the map $\iota_U$, and 
hence it induces a map 
$$
\delta _U\colon \Auteq^\dagger D(U)\to \Aut V (\cong \Aut C).
$$
%
%
Then it follows from Proposition \ref{prop:noreducible} that
\begin{align*}
\Image\delta
=&
\left\{
\varphi_C \mid \varphi\in\Aut S, 
\text{ or } 
\varphi\colon S\to J_S(a,b) \text{ isomorphism with }
(a\lambda_S,b)=1
\right\}\\
=&
\left\{
\varphi_C \mid \varphi\in\Aut S, 
\text{ or } 
\varphi\colon S\to J_S(b) \text{ isomorphism with }
(\lambda_S,b)=1
\right\}.
\end{align*}
The second equality follows from \eqref{eqn:J(a,b)=J(b)}.
\end{rem}


\subsection{Autoequivalences associated with  irreducible fibers: The structure of the group}\label{subsec:non-reducible:structure}
We use the notation of \S\ref{subsec:general} in this subsection.
The aim of \S\ref{subsec:non-reducible:structure} is to study the structure of the group $\Auteq^\dagger D(U)$. 

For $m\in \Z$, we define the congruence subgroup 
of $\SL (2,\Z)$ by
$$
\Gamma_0(m):=\bigl\{ \begin{pmatrix}
c& a\\
d& b   
\end{pmatrix}\in \SL (2,\Z)\bigm|d\in m\Z \bigr\}.
$$
Let us consider the surjective map 
$$
\Gamma_0(\lambda_{S})\to (\Z/\lambda_S \Z)^*/H_S
\qquad  \begin{pmatrix}
c& a\\
d& b   
\end{pmatrix} \mapsto b.
$$
This is actually a group homomorphism and its kernel coincides 
with the group
$$
\bigl\{ \begin{pmatrix}
c& a\\
d& b   
\end{pmatrix}\in \Gamma_0(\lambda_{S}) \bigm| J_S(b)\cong S \bigr\}.
$$


\begin{thm}\label{thm:AeV_extension}
There is a short exact sequence
\begin{align*}
1\to 
\Span{\otimes \mathcal{O}_U(D)\mid D\cdot F=0}&\rtimes \Aut S\times \Z[2] 
\to
\Auteq ^\dagger D(U) \notag\\
&\to
\bigl\{ \begin{pmatrix}
c& a\\
d& b   
\end{pmatrix}\in \Gamma_0(\lambda_{S}) \bigm| J_S(b)\cong S \bigr\}
\to 1.
\end{align*}
\end{thm}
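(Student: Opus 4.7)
The plan is to construct a surjective group homomorphism $\Theta$ from $\Auteq^\dagger D(U)$ onto the stated subgroup of $\Gamma_0(\lambda_S)$ by recording the action on a smooth fiber, verify surjectivity using Theorem \ref{thm:SLFM}, and then identify the kernel using the classification of Fourier--Mukai transforms given in Proposition \ref{prop:noreducible}.

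Given $\Phi=\Phi^{\mathcal{P}}\in\Auteq D(S)$, Lemma \ref{lem:fiber_product} produces $\delta(\Phi)\in\Aut C$ and shows that $\mathcal{P}|_{U\times U}$ is a sheaf on $U_{\delta(\Phi)}\times_V U$. For a smooth fiber $F=\pi^{-1}(c)$ such that $F':=\pi^{-1}(\delta(\Phi)(c))$ is also smooth, Lemma \ref{lem:base_change} gives an induced Fourier--Mukai equivalence $\Phi_F\colon D(F)\to D(F')$. After fixing an isomorphism $F\cong F'$ of elliptic curves, define $\Theta(\iota_U(\Phi)):=\theta(\Phi_F)$ in the sense of \S\ref{subsec:elliptic}; independence of the choice of $F$ follows because the resulting matrix is locally constant in the flat, connected family of smooth fibers over $V$. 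Remark \ref{rem:d_divides_lambda} then places $\Theta(\iota_U(\Phi))$ in $\Gamma_0(\lambda_S)$, and Proposition \ref{prop:noreducible}(iii) together with \eqref{eqn:J(a,b)=J(b)} forces $J_S(b)\cong S$ whenever the upper-right entry of the matrix is nonzero (in the remaining case (ii) one has $b=\pm 1$, for which the condition is automatic). For surjectivity, given an allowed $M=\begin{pmatrix}c & a\\ d & b\end{pmatrix}$ with $a>0$, compose $\Phi^{\mathcal{U}}_{J_S(a,b)\to S}$ from Theorem \ref{thm:SLFM} with the given isomorphism $J_S(a,b)\cong J_S(b)\cong S$ over $C$; by \eqref{eqn:matrix} this lifts $M$ exactly. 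The case $a\le 0$ is reduced to $a>0$ by composing with $[1]$ (which realizes $-I$) and a line bundle twist by $\mathcal{O}_S(D)$ with $D\cdot F=d$, such $D$ existing because $\lambda_S\mid d$.

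To identify the kernel, suppose $\Theta(\iota_U(\Phi))=I$. Then in the classification of Proposition \ref{prop:noreducible} only case (ii) can occur, since case (iii) forces $a>0$ in the upper-right entry; hence $\iota_U(\Phi)\cong\varphi_{U*}((-)\otimes \mathcal{L}|_U)[n]$ for some $\varphi_U\in\Aut U$, line bundle $\mathcal{L}$ on $S$, and $n\in\Z$. The identity-matrix condition then forces $n$ to be even (since $[1]\mapsto -I$) and $\mathcal{L}\cdot F=0$. Combined with the identification $\Aut U\cong\Aut S$ from \S\ref{subsec:general} and the obvious semidirect product structure (automorphisms acting by pullback on line bundles), this identifies the kernel with $\Span{\otimes \mathcal{O}_U(D)\mid D\cdot F=0}\rtimes\Aut S\times\Z[2]$.

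The main technical obstacle is to verify that $\Theta$ genuinely descends from $\Auteq D(S)$ to $\Auteq^\dagger D(U)$, i.e., that every element of $\Ker\iota_U$ (most notably the twists $T_{\mathcal{O}_G(a)}\in B$) acts trivially on the fiber Fourier--Mukai $\Phi_F$. Since $\mathcal{O}_G(a)$ is supported on $Z$, disjoint from the smooth fiber $F$, Remark \ref{rem:twist} gives $T_{\mathcal{O}_G(a)}(\beta)\cong\beta$ for any $\beta\in D(F)$, so the obstacle is mild. A secondary point is confirming that the lifts constructed in the surjectivity step truly lie in the image of $\iota_U$, which is automatic as they arise as restrictions of autoequivalences of $D(S)$.
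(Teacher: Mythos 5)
Your proof is correct and follows essentially the same route as the paper: define the map by restricting the Fourier--Mukai kernel to a smooth fiber $F\times F'$ and taking $\theta$ as in \S\ref{subsec:elliptic}, identify the kernel via the classification in Proposition \ref{prop:noreducible} (only the $\dim\Supp\mathcal{P}=2$ case survives), and get surjectivity from Theorem \ref{thm:SLFM} together with line-bundle twists and the shift. The only cosmetic difference is that the paper defines $\Theta_U$ directly on $\Auteq^\dagger D(U)$ (so no descent check is needed) and verifies surjectivity by listing generators and manipulating matrices rather than by your case reduction on the sign of $a$; both are equivalent.
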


\begin{proof}
For $\Phi^\mathcal{P} \in \Auteq ^\dagger D(U)$, 
note that 
$\mathcal P$ is a shift of a sheaf on 
$U_{\delta_U(\Phi)}\times_{V}U$
by  Lemma \ref{lem:fiber_product}.

Take a smooth fiber $F=F_c$ over a point $c\in V$, and 
denote by $F'=F_{c'}$ a smooth fiber over the point $c'=\delta(\Phi)(c)$.
Let 
$k\colon F\hookrightarrow U, $ and 
$k'\colon F'\hookrightarrow U$ be the 
natural inclusions. 
 
It follows from Lemma \ref{lem:base_change} that 
we obtain a Fourier--Mukai transform  $\Phi^{\mathcal{P}|_{F\times F'}}_{F\to F'}$
such that there is an isomorphism of functors 
$$
\Phi^{\mathcal{P}|_{F\times F'}}\circ \mathbb{L}k^* \cong \mathbb{L}k'^*\circ  \Phi^\mathcal{P}.
$$
By choosing a basis as in \S\ref{subsec:elliptic},  there is an identification between $H^{\mathrm{ev}}(F,\Z)$ and  $H^{\mathrm{ev}}(F',\Z)$.
Then, we obtain a group homomorphism
%
$$
\Theta_U\colon \Auteq^\dagger D(U) \to \SL (2,\Z).
$$
Notice that this morphism does not depend on the choice of a smooth fiber $F$ by the classification of the elements of $\Auteq^\dagger D(U)$ in Proposition \ref{prop:noreducible}.

Since $F$ and $F'$ are isomorphic, we fix an isomorphism.
Then $\Phi^{\mathcal{P}|_{F\times F'}}_{F\to F'}$ can be regarded as an autoequivalence of $D(F)$. Then we have
$$
\Theta_U (\Phi^{\mathcal{P}})=\theta(\Phi^{\mathcal{P}|_{F\times F'}}_{F\to F'}).
$$
This is important for the latter computation.
(Recall the definition of $\theta$ in \S\ref{subsec:elliptic}.)

Suppose that $\Phi=\Phi^{\mc{P}}\in \Ker \Theta_U$. 
Proposition \ref{prop:noreducible} forces that
$\dim \Supp \mathcal{P}=2$, and hence
$$
\Phi\in\Span{\otimes \mathcal{O}_U(D)\mid D\cdot F=0}\rtimes \Aut S\times  \Z[2] ,
$$
that is,
$$
\Ker \Theta_U\cong \Span{\otimes \mathcal{O}_U(D)\mid D\cdot F=0}\rtimes \Aut S\times  \Z[2].
$$

Next, let us consider the image of $\Theta_U$. 
Take integers $a,b$ with $a>0$ and $(a\lambda_S,b)=1$.
Assume that there is an isomorphism $\phi\colon J_S(a,b)\to S$.
Then
\begin{equation}\label{eqn:Phi^U}
\Phi^{\mathcal{U}}_{J_S(a,b)\to S}\circ \phi^*
\end{equation}
gives an autoequivalence of $D(S)$. In this case, it follows from 
Remark \ref{rem:d_divides_lambda} that
$$
\Theta_U\circ\iota_U
(\Phi^{\mathcal{U}}_{J_S(a,b)\to S}\circ \phi^*)
=\begin{pmatrix}
c& a\\
d& b   
\end{pmatrix}
\in \SL (2,\Z)
$$
holds 
for some $c,d\in \Z$ such that $\lambda_S$ divides $d$. 
We also have
$$
\Theta_U\circ\iota_U(\Pic S)
=\Span{
\begin{pmatrix}
1& 0\\
\lambda_{S}& 1   
\end{pmatrix}}
$$
and
$$
\Theta_U([1])=\begin{pmatrix}
-1& 0\\
0& -1   
\end{pmatrix}.
$$
Then it follows from Proposition \ref{prop:noreducible} that
\begin{align*}
&\Image \Theta_U\\
=&\Span{
\begin{pmatrix}
1& 0\\
\lambda_{S}& 1   
\end{pmatrix},
\begin{pmatrix}
c& a\\
d& b   
\end{pmatrix},
\begin{pmatrix}
-1& 0\\
0& -1   
\end{pmatrix}
\bigm| a\ne 0, bc-ad=1, d\in \lambda_{S}\Z, J_S(b)\cong S}.
\end{align*}
Note that 
$$
\begin{pmatrix}
-1& 0\\
\lambda_{S}& -1   
\end{pmatrix}
=
\begin{pmatrix}
1& 0\\
-\lambda_{S}& 1   
\end{pmatrix}
\begin{pmatrix}
-1& 0\\
0& -1   
\end{pmatrix}
\in \Image \Theta_U.
$$
Furthermore, we can see 
$$
\begin{pmatrix}
-1& 0\\
0& -1   
\end{pmatrix}=
\begin{pmatrix}
-1& a\\
0& -1   
\end{pmatrix}
\begin{pmatrix}
1& a\\
0& 1   
\end{pmatrix}
$$
for any $a\in \Z$. 
Therefore, we conclude that 
$$
\Image \Theta_U
=\bigl\{ \begin{pmatrix}
c& a\\
d& b   
\end{pmatrix}\in \Gamma_0(\lambda_{S}) \bigm| J_S(b)\cong S \bigr\},
$$
which completes the proof of Theorem \ref{thm:AeV_extension}.
\end{proof}

We define 
$$
\Theta\colon \Auteq D(S)\to \SL (2,\Z)
$$ to be the composition $\Theta_U\circ \iota_U$, where
we regard $\iota_U$ as a surjective homomorphism from 
$\Auteq D(S)$ to $\Auteq^\dagger D(U)$.
Thus, by definition, $\Image \Theta=\Image \Theta_U$ holds.

\begin{rem}\label{rem:image_Theta}
\begin{enumerate}
\item
By the remark before Theorem \ref{thm:AeV_extension}, 
we have a group isomorphism
$$
\Gamma_0(\lambda_{S})/ \Image \Theta
\cong
(\Z/\lambda_S \Z)^*/H_S.
$$
As explained in \S\ref{subsec:bridgeland}, the latter group can be naturally identified with the set
$\FM(S)$.
\item
When $|(\Z/\lambda_S \Z)^*|\le 2$ (e.g. $\lambda_S \le 4$),
we have $(\Z/\lambda_S \Z)^*=H_S$. Hence,  
we can see that 
$
\Image \Theta=\Gamma_0(\lambda _S).
$
In particular, when $\lambda_S=1$, we see that 
$
\Image \Theta=\SL (2,\Z).
$
\end{enumerate}
\end{rem}

%

\subsection{Kernels of $\iota_Z$ and $\iota_U$}
Now, let us study the kernel of the homomorphisms 
$\iota_Z$ 
given in Proposition \ref{prop:Z}.
First of all, we may assume that $Z\ne\emptyset$
because, otherwise, we have
$$\Ker\iota_Z= \Auteq D(S) =\Auteq^\dagger D(U),$$
and the last group was already  studied in 
\S\ref{subsec:non-reducible:structure}.
Take $\Phi\in \Ker \iota_Z$. Then for any $x\in Z$, 
we have 
$\Phi(\mathcal{O}_x)\cong\mathcal{O}_x.$
%
Hence, by \cite[Corollary 6.14]{Hu06}, there is a point 
$y\in U$ such that 
\begin{equation}\label{eq:pointU}
\Phi(\mathcal{O}_y)\cong\mathcal{O}_w
\end{equation} 
for some $w\in U$.
We can apply Proposition \ref{prop:noreducible} (ii)
for $\iota_U(\Phi)$ to obtain that, for all points $y\in U$, there is a point 
$w$ satisfying (\ref{eq:pointU}). 
Therefore, Lemma \ref{lem:birational} implies that $\Ker \iota_Z$
is contained in the group $A(S)$ of the standard autoequivalences (see \S \ref{subsec:motivation}), and hence
\begin{align}\label{ali:ker_iota_Z}
\Ker \iota_Z= \Span{\otimes \mathcal{O}_S(F_c) \mid F_c
\text{ a fiber of }\pi}
\rtimes \Aut_ZS,
\end{align} 
where 
$
\Aut_ZS:=\{\varphi \in \Aut S \mid \varphi(z)=z\text{ for all }z\in Z \}.
$

Let us denote 
$$
B:=\Span{T_{\mathcal{O}_G(a)} \mid G \text{ is a $(-2)$-curve }}.$$
For the homomorphism $\iota_U$ given
in Proposition \ref{prop:U}, we believe that the equality
\begin{equation}\label{eq:B=ker_U}
B=\Ker\iota_U
\end{equation}
holds.
Actually we have the following.


\begin{lem}\label{lem:ker_iota_U}
Suppose that equality \eqref{eq:B=ker_U} holds.
Then Conjecture \ref{conj} is true.
\end{lem}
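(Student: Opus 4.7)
The plan is a diagram chase exploiting the factorization $\Theta = \Theta_U \circ \iota_U$ together with the hypothesis $B = \Ker\iota_U$. Since $\iota_U\colon \Auteq D(S) \to \Auteq^{\dagger} D(U)$ is surjective by construction, the surjectivity of $\Theta$ onto the group $\bigl\{ \begin{pmatrix} c & a \\ d & b \end{pmatrix} \in \Gamma_0(\lambda_S) \bigm| J_S(b) \cong S \bigr\}$ is inherited from the corresponding statement for $\Theta_U$ proved in Theorem~\ref{thm:AeV_extension}.

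The remaining task is to compute $\Ker\Theta = \iota_U^{-1}(\Ker\Theta_U)$. Setting
\[
N := \Span{\otimes \mc{O}_S(D)\mid D\cdot F=0}\rtimes \Aut S\times\Z[2] \subset \Auteq D(S),
\]
I would first verify that $\iota_U(N)=\Ker\Theta_U$, where the right-hand side is described in Theorem~\ref{thm:AeV_extension}. This requires three elementary observations: (a) the isomorphism $\Aut S \cong \Aut U$ recorded in \S\ref{subsec:general} identifies the $\Aut S$ factors on the two sides; (b) the intertwining relation \eqref{eqn:compatibility} gives $\iota_U([2]) = [2]$; and (c) every line bundle on $U$ of trivial fiber degree lifts to such a line bundle on $S$, since any divisor on $U$ extends to $S$ by Zariski closure and fiber-degree is insensitive to that extension (a general fiber avoids $Z$).

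Once $\iota_U(N) = \Ker\Theta_U$ is in hand, a routine argument yields $\iota_U^{-1}(\Ker\Theta_U) = B\cdot N$: for $\Psi\in\iota_U^{-1}(\Ker\Theta_U)$ one picks $\Psi_0\in N$ with $\iota_U(\Psi_0)=\iota_U(\Psi)$, so that the hypothesis $\Ker\iota_U = B$ forces $\Psi\circ\Psi_0^{-1}\in B$; the reverse inclusion is immediate from $\iota_U(B)=\id$. To rewrite $B\cdot N$ in the semi-direct product form required by Conjecture~\ref{conj}, I would invoke that $\Aut S$ normalizes both $B$ (pullback along an automorphism permutes $(-2)$-curves, so conjugation sends each generator $T_{\mc{O}_G(a)}$ to another such twist functor) and the line-bundle-twist subgroup (by pullback of divisors), while $[2]$ is central in $\Auteq D(S)$. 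This gives exactly $\langle B, \otimes \mc{O}_S(D)\mid D\cdot F=0\rangle \rtimes \Aut S\times\Z[2]$ as the kernel of $\Theta$.

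No individual step is difficult; the entire substantive content of the lemma has been conceded in the hypothesis $B = \Ker\iota_U$ and in Theorem~\ref{thm:AeV_extension}. The only point requiring genuine care is the verification that $\iota_U|_N$ surjects onto $\Ker\Theta_U$, whose essential input is the liftability of line bundles from $U$ to $S$ together with the extension of $U$-automorphisms to $S$-automorphisms. Everything else is formal manipulation of the short exact sequences.
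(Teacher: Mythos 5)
Your argument is correct and is exactly the expansion of the paper's own (one-line) proof, which simply cites the description of $\Auteq^\dagger D(U)$ in Theorem \ref{thm:AeV_extension}: the factorization $\Theta=\Theta_U\circ\iota_U$ with $\iota_U$ surjective onto $\Auteq^\dagger D(U)$ gives the image, and $\Ker\Theta=\iota_U^{-1}(\Ker\Theta_U)=B\cdot N$ follows from the hypothesis $B=\Ker\iota_U$ together with $\iota_U(N)=\Ker\Theta_U$. The three verifications you single out (extension of $U$-automorphisms and of divisors of trivial fiber degree from $U$ to $S$, and compatibility with the shift) are precisely the facts the paper has already recorded in \S\ref{subsec:general} and Proposition \ref{prop:U}, so nothing is missing.
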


\begin{proof}
The result follows from the description of 
$\Auteq^\dagger D(U)$ in Theorem \ref{thm:AeV_extension}.
\end{proof}

In \S\ref{sec:I_n}, \ref{sec:-1} and \ref{sec:-2}, we shall check the equality \eqref{eq:B=ker_U} 
in the case that all reducible fibers on a given projective elliptic surface are of type $\mathrm{I}_n$ $(n>1)$, 
and consequently show  Conjecture \ref{conj} in this case.


\section{Autoequivalences associated with singular fibers of type $\mathrm{I}_n$ for $n>1$}\label{sec:I_n}

Throughout this section, $\pi\colon S\to C$ is a projective 
elliptic surface whose reducible fibers are non-multiple cycles of $(-2)$-curves, that is, of type 
$\mathrm{I}_n$ for $n>1$. 
In this case, the set $Z$ is a disjoint union of  cycles of projective lines. Below
we regard $Z$ as a closed subscheme of $S$ equipped with  the reduced induced structure.

In our setting, line bundles on $(-2)$-curves are spherical
in the sense of Definition-Proposition \ref{def-prop:twist_functor}. 
Therefore, $\Auteq D_Z(S)$ contains twist functors, and hence it is highly involved.
The following is the main result of this article.


\begin{thm}\label{thm:typeI_n}
Let $S$ be a smooth projective elliptic surface with $\kappa (S)\ne 0$. 
Suppose that each reducible fiber  on the elliptic surface $S$ is  of type $\mathrm{I}_n$ for some $n>1$. 
Then Conjecture \ref{conj} is true. Namely
we have
\begin{align*}
1\to \Span{B,\otimes \mathcal{O}_S(D)\mid D\cdot F=0}\rtimes &\Aut S\times  
\Z[2]
\to
\Auteq D(S) \notag\\
& \stackrel{\Theta}\to 
\bigl\{ \begin{pmatrix}
c& a\\
d& b   
\end{pmatrix}\in \Gamma_0(\lambda_{S}) \bigm| J_S(a,b)\cong S \bigr\}
\to 1.
\end{align*}
\end{thm}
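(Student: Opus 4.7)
The plan is to reduce Theorem \ref{thm:typeI_n} to the equality $B = \Ker \iota_U$ via Lemma \ref{lem:ker_iota_U}, and then to reduce that equality to Proposition \ref{proposition:step -2 of A_n intro}. The inclusion $B \subseteq \Ker \iota_U$ is immediate from Remark \ref{rem:twist}: for any $(-2)$-curve $G \subset Z$ and any object $\beta$ supported on $U$, one has $\Supp \beta \cap \Supp \mc{O}_G(a) = \emptyset$, so $T_{\mc{O}_G(a)}(\beta) \cong \beta$; combined with the intertwining relation \eqref{eqn:compatibility}, this gives $\iota_U(T_{\mc{O}_G(a)}) = \id_{D(U)}$.

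For the reverse inclusion, take $\Phi \in \Ker \iota_U$. Because $\delta$ factors through $\iota_U$ (Remark \ref{rem:image_delta}), one has $\delta(\Phi) = \id_C$, so $\Phi$ preserves every fiber of $\pi$. By Proposition \ref{prop:Z}, $\iota_Z(\Phi)$ is a well-defined autoequivalence of $D_Z(S)$, and since the connected components $Z_0$ of $Z$ lie in distinct fibers, $\iota_Z(\Phi)$ restricts to an autoequivalence of each $D_{Z_0}(S)$. Moreover, $\Phi(\mc{O}_x) \cong \mc{O}_x$ for $x \in U$ shows that $\Phi$ preserves $[\mc{O}_x] \in H^4(S,\Q)$, a class independent of $x \in S$. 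Hence the hypothesis of Proposition \ref{proposition:step -2 of A_n intro} is satisfied on every component $Z_0$.

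I then apply Proposition \ref{proposition:step -2 of A_n intro} to each $Z_0 = C_1 \cup \cdots \cup C_n$. For a fixed $C_1 \subset Z_0$, it produces $\Psi_{Z_0} \in B$ and integers $a, b, i$ with $(\Psi_{Z_0} \circ \Phi)(\mc{O}_x) \cong \mc{O}_y[i]$ for all $x \in C_1$; comparison with $(\Psi_{Z_0} \circ \Phi)(\mc{O}_x) \cong \mc{O}_x$ for $x \in U$, via Lemma \ref{lem:FMfamily}, forces $i = 0$. The main obstacle will be Proposition \ref{proposition:step -2 of A_n intro} itself, which is to be proved in \S\ref{sec:-1}--\S\ref{sec:-2} by adapting the techniques of \cite{IU05}. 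A secondary difficulty is to propagate the statement from the single curve $C_1$ to the remaining components $C_2, \ldots, C_n$ of the cycle; I plan to handle this by iteratively applying the proposition (with different choices of distinguished component), carefully controlling the interference between successive twist modifications, so as to assemble a single $\Psi \in B$ for which $(\Psi \circ \Phi)(\mc{O}_x)$ is a structure sheaf of a point for every $x \in S$.

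Granting the above, Lemma \ref{lem:birational} yields $\Psi \circ \Phi \in A(S)$. Since $\Psi \circ \Phi \in \Ker \iota_U$, its automorphism component must act trivially on the dense open $U$ (hence be the identity), its shift must vanish, and the underlying line bundle must restrict trivially to $U$; therefore $\Psi \circ \Phi \cong {} \otimes \mc{O}_S(D)$ with $D$ a $\Z$-linear combination of $(-2)$-curves. By \cite[Proposition 4.18]{IU05} (recorded as item (iv) of the Remark following Conjecture \ref{conj}), such tensor functors lie in $B$, so $\Phi = \Psi^{-1} \circ (\Psi \circ \Phi) \in B$. This establishes \eqref{eq:B=kerU}, and Lemma \ref{lem:ker_iota_U} then finishes the proof of Theorem \ref{thm:typeI_n}.
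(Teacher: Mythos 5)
Your reduction chain (Theorem \ref{thm:typeI_n} $\Rightarrow$ equality \eqref{eq:B=kerU} via Lemma \ref{lem:ker_iota_U}, the inclusion $B\subset\Ker\iota_U$ via Remark \ref{rem:twist}, the verification that any $\Phi\in\Ker\iota_U$ restricts to each $D_{Z_j}(S)$ and preserves $\ch(\mc{O}_x)$, and the normalization $i=0$, $y=x$ for points of $C_1$) matches the paper's argument. The genuine gap is exactly the step you flag as a ``secondary difficulty'' and then defer: propagating from the distinguished curve $C_1$ to the remaining curves $C_2,\dots,C_n$ of the cycle. Iterating Proposition \ref{proposition:step -2 of A_n intro} with a different distinguished component produces a new element of $B_j$ that has no reason to fix the point sheaves on the curves already treated, and you give no mechanism for controlling this interference; without one, you cannot assemble a single $\Psi\in B$ with $(\Psi\circ\Phi)(\mc{O}_x)\cong\mc{O}_x$ for \emph{all} $x\in S$, which your appeal to Lemma \ref{lem:birational} requires. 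This is not a minor bookkeeping issue --- it is precisely the point where the cyclic ($\tilde A_n$) geometry differs from the chain case, and ``careful control'' is not a proof.

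The paper resolves this differently, and you should adopt its device: once $\Psi_j\circ\Phi_j$ fixes $\mc{O}_{C_1}$, $\mc{O}_{C_1}(-1)$ and hence $\mc{O}_x$ for all $x\in C_1$, it preserves the subcategory $D_{Z'_j}(S)$ for the \emph{chain} $Z'_j=C_2\cup\cdots\cup C_n$, which is an $A_{n-1}$-configuration of $(-2)$-curves. One then invokes the already established classification for $A_{n-1}$-configurations (Theorem \ref{thm:IU05}, i.e.\ \cite[Theorem 1.3]{IU05}), intersects with $\Ker\iota_U$ to get
$\Psi_j\circ\Phi_j\in B'_j\rtimes\Span{\otimes\mc{O}_S(C_i)\mid i=1,\dots,n}$,
and uses \cite[Proposition 4.18 (i)]{IU05} to see this lies in $B_j$. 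This handles all the remaining curves of the cycle in one stroke and makes your final paragraph (Lemma \ref{lem:birational} plus the identification of line bundles supported on $(-2)$-curves with elements of $B$) unnecessary. As written, your proposal cannot be completed without either supplying the missing interference control or switching to this reduction to the $A_{n-1}$ case.
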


Theorem \ref{thm:typeI_n} concerns the autoequivalences of the derived categories of surfaces containing  $\tilde{A}_n$-configurations of $(-2)$-curves. 
Many ideas of the proof come from \cite{IU05}, 
where we study the autoequivalences of the derived categories of surfaces
containing $A_n$-configurations of $(-2)$-curves.


\subsection{Reduction of the proof of Theorem \ref{thm:typeI_n}}
Recall that Lemma \ref{lem:ker_iota_U} tells us 
that, if we can show the equation \eqref{eq:B=ker_U},
we obtain Theorem \ref{thm:typeI_n}.

Let $\{ Z_j\}_{j=1}^M$ be the set of connected components of $Z$, that is, each $Z_j$ is a
singular fiber of type $\mathrm{I}_{n_j}$ for some 
$n_j>1$. We define 
$$
B_j:=\Span{T_{\mc{O}_G(a)}\mid G \text{ is a $(-2)$-curves contained in $Z_j$}}.
$$
Take a connected component of $Z$, and denote it by 
$
Z_0.
$ 
We put 
$$
Z_0=C_1\cup \cdots \cup C_n
$$ 
such that each $C_i$ is a $(-2)$-curve on $S$, and they satisfy
\begin{equation}\label{eqn:label}
C_l\cdot C_m = 
\begin{cases}
                1 & |l-m|=1, \text{ or }|l-m|=n-1 \\
                0 & \text{ otherwise}
\end{cases}
\end{equation}
in the case $n>2$. In the case $n=2$, $C_1$ and $C_2$ intersect each other transversely at two points.

The inclusion 
$
B\subset \Ker\iota_U
$
holds by Remark \ref{rem:twist}.
Thus in order to prove \eqref{eq:B=ker_U}, 
it is left to show 
$
 \Ker\iota_U\subset B.
$
This, in turn, can be reduced to showing the following.


\begin{prop}[cf.~Proposition 1.7 in \cite{IU05}]\label{proposition:step -2 of A_n}
Suppose that we are given an autoequivalence $\Phi$ of $D_{Z_0}(S)$ preserving
the cohomology class $\ch(\mc{O}_x) \in H^4(S,\Q)$ for all points $x\in Z_0$.
Then, there are integers $a$, $b$ $(1\le b\le n)$ and $i$,
and there is an autoequivalence $\Psi\in B_0$ such that
\begin{equation}\label{eqn:standard}
\Psi \circ \Phi(\owe_{C_1}) \cong \owe_{C_{b}}(a)[i] 
\quad\mbox{and}\quad
\Psi \circ \Phi(\owe_{C_1}(-1)) \cong \owe_{C_{b}}(a-1)[i].
\end{equation}
Consequently, 
for any point $x\in C_1$, we can find a point $y\in C_b$ 
with $\Psi\circ\Phi (\mc{O}_x)\cong \mc{O}_y[i]$.  
\end{prop}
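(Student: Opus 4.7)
The plan is to decompose the proof into two stages: first invoke Proposition \ref{proposition:step -1 of A_n} to normalize $\Phi(\mc{O}_{C_1})$, and then propagate that normalization to $\mc{O}_{C_1}(-1)$ using the short exact sequence $0 \to \mc{O}_{C_1}(-1) \to \mc{O}_{C_1} \to \mc{O}_p \to 0$ for a chosen point $p \in C_1$.

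First, I would apply Proposition \ref{proposition:step -1 of A_n} to obtain $\Psi_1 \in B_0$ together with integers $a$, $b$, $i$ (with $1 \le b \le n$) such that $\Phi' := \Psi_1 \circ \Phi$ satisfies $\Phi'(\mc{O}_{C_1}) \cong \mc{O}_{C_b}(a)[i]$. Since each generator $T_{\mc{O}_G(c)}$ of $B_0$ preserves $\ch(\mc{O}_x) = (0,0,1)$ by the Euler-form computation of Example \ref{exa:twist}(i), $\Phi'$ inherits the Chern-character-preservation hypothesis from $\Phi$.

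Next, for $p \in C_1$, I would analyze $\Phi'(\mc{O}_p)$, whose Chern character is $(0,0,1)$. Since $\Phi'(\mc{O}_p)$ is supported on the cycle $Z_0$, at least one of its nonzero cohomology sheaves has $c_1^2 = 0$, so (an adaptation to $D_{Z_0}(S)$ of) Lemma \ref{lem:support}(i) forces $\Phi'(\mc{O}_p) \cong F[j]$ for a single coherent sheaf $F$. Transporting the canonical surjection $\mc{O}_{C_1} \twoheadrightarrow \mc{O}_p$ yields a nonzero morphism $\mc{O}_{C_b}(a)[i] \to F[j]$; the constraint $(-1)^j \ch(F) = (0,0,1)$ forces $j$ to be even, and combining the simplicity of $\Phi'(\mc{O}_p)$ with the one-dimensionality of $\Hom(\mc{O}_{C_b}(a), \mc{O}_q)$ for $q \in C_b$ pins $F \cong \mc{O}_q$ with $q \in C_b$ and $j = i$. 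In particular this forces $i$ even, which is exactly the compatibility needed for the conclusion.

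Then I would apply $\Phi'$ to the short exact sequence to obtain a distinguished triangle
\[
\Phi'(\mc{O}_{C_1}(-1)) \to \mc{O}_{C_b}(a)[i] \to \mc{O}_q[i].
\]
Because $q \in C_b$ and $\Hom(\mc{O}_{C_b}(a), \mc{O}_q) = \C$, the second morphism is a nonzero scalar multiple of the canonical surjection, whose kernel is $\mc{O}_{C_b}(a-1)$. Hence $\Phi'(\mc{O}_{C_1}(-1)) \cong \mc{O}_{C_b}(a-1)[i]$, and taking $\Psi := \Psi_1$ delivers the required pair of isomorphisms. The final assertion of the proposition follows by applying the second step to an arbitrary point $x \in C_1$ in place of $p$.

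The main obstacle is Proposition \ref{proposition:step -1 of A_n} itself, whose proof classifies the image of $\mc{O}_{C_1}$ up to the action of $B_0$ via a delicate analysis of spherical objects supported on a cycle of $(-2)$-curves, in the spirit of \cite{IU05}. Within the present reduction the trickiest point is the second step above, where one must verify that $\Phi'(\mc{O}_p)$ collapses to a shifted skyscraper; recognizing the kernel of $\mc{O}_{C_b}(a) \twoheadrightarrow \mc{O}_q$ in the third step is then routine.
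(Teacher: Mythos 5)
Your reduction breaks at the second step: it is not true that $\Phi'(\mc{O}_p)$ must be a shift of a sheaf, and the assertion you use to get there --- that ``at least one of its nonzero cohomology sheaves has $c_1^2=0$'' --- is unjustified. The cohomology sheaves of an object of $D_{Z_0}(S)$ have first Chern classes of the form $\sum_j a_j[C_j]$, and the intersection form on these classes is negative semidefinite with radical $\Z[Z_0]$; so $c_1^2=0$ holds only for multiples of the whole fiber class, and nothing forces any single cohomology sheaf of $\Phi'(\mc{O}_p)$ to have such a class (the hypothesis only makes the alternating sum of all of them vanish). Concretely, take $\Phi=T_{\mc{O}_{C_1}(-1)}$, which preserves $\ch(\mc{O}_x)$ by \eqref{eqn:twist_Grothendieck}. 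Since the evaluation $\mc{O}_{C_1}(-1)^{\oplus 2}\to\mc{O}_{C_1}$ is surjective with kernel $\mc{O}_{C_1}(-2)$, one gets $T_{\mc{O}_{C_1}(-1)}(\mc{O}_{C_1})\cong\mc{O}_{C_1}(-2)[1]$, so your first step may legitimately return $\Psi_1=\id$, $b=1$, $a=-2$, $i=1$. But $T_{\mc{O}_{C_1}(-1)}(\mc{O}_{C_1}(-1))\cong\mc{O}_{C_1}(-1)[-1]\not\cong\mc{O}_{C_1}(-3)[1]$, and correspondingly $T_{\mc{O}_{C_1}(-1)}(\mc{O}_p)$ is a genuine two-term complex whose cohomology sheaves are $\mc{O}_{C_1}(-2)$ and $\mc{O}_{C_1}(-1)$, each with $c_1^2=-2$. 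So the conclusions of your second and third steps are false for this $\Phi$ with this $\Psi_1$; the proposition only holds because one may choose a \emph{different} $\Psi\in B_0$ (here $T_{\mc{O}_{C_1}(-1)}^{-1}$), i.e., one must keep twisting after normalizing $\Phi(\mc{O}_{C_1})$. (Even granting that $\Phi'(\mc{O}_p)\cong F[j]$ with $F$ a sheaf, your identification $j=i$ is also not forced, since $\Ext^1_S(\mc{O}_{C_b}(a),\mc{O}_q)\ne 0$ for $q\in C_b$.)

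The work you omit is precisely the bulk of the paper's proof. The paper analyzes the spherical object $\beta=\Phi(\mc{O}_{C_1}(-1))$ rather than $\Phi(\mc{O}_p)$: sphericity gives rigidity and torsion-freeness of $\bigoplus_p H^p(\beta)$ over $Z_0$ (Lemma \ref{lem:cohom_spherical}), hence the structural control of Proposition \ref{prop:BBDG} and Lemma \ref{lem:degree_restriction}. Using the spectral sequence \eqref{equation:spectral} computing $\Hom^{\bullet}(\beta,\alpha)=\C^2$ and a three-case analysis, it proves Claim \ref{cla:l(beta),l(alpha)}: if $l(\beta)>1$ there is a further $\Psi\in B_0$ with $l(\Psi(\alpha))=1$ and $l(\Psi(\beta))<l(\beta)$. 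Induction on $l(\beta)$ then reduces to $l(\alpha)=l(\beta)=1$, where the $A_1$-case of \cite{IU05} applies, and the skyscraper statement is \emph{deduced} from \eqref{eqn:standard} by your triangle argument run in the opposite (easy) direction. Your first step and your final cone computation match the paper; what is missing is the entire mechanism for normalizing $\Phi(\mc{O}_{C_1}(-1))$ without disturbing $\Phi(\mc{O}_{C_1})$.
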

Suppose that we have shown Proposition \ref{proposition:step -2 of A_n}.
Note that  $\Ker \iota_U\cap \Ker\iota_Z =\{1\}$ by \eqref{ali:ker_iota_Z}.
Thus we can consider both, $\Ker \iota_U$ and $B$, as subgroups of 
$\Auteq D_Z(S)$ and prove the inclusion $\Ker\iota _U\subset B$ inside this group.
  
Take $\Phi \in \Ker \iota_U$. 
Since
$\delta(\Phi)=\id _C$ for $\delta$ as constructed in \S \ref{subsec:non-reducible}, 
$\Phi$ induces autoequivalences $\Phi_j$ of  $D_{Z_j}(S)$.  Since all points $x\in S$ define the same cohomology class $\ch(\mc{O}_x)$, 
the autoequivalence $\Phi_j$ satisfies the assumption in  Proposition \ref{proposition:step -2 of A_n}.
We fix some $j$ and put $n=n_j$ for simplicity. 
%
%
We take the irreducible decomposition of $Z_j$ as $Z_j=C_1\cup\cdots\cup C_{n}$.
Now we can 
apply Proposition \ref{proposition:step -2 of A_n} for $\Phi_j$ to find 
$
\Psi_j\in B_j
$
satisfing \eqref{eqn:standard}.
Since $\Psi_j \circ \Phi$ also belongs to $\Ker \iota_U$,
we have $b=1$ and $i=0$, and  $x=y$ in Proposition \ref{proposition:step -2 of A_n}. 
Hence, $\Psi_j \circ \Phi_j$ gives an autoequivalence of $D_{Z'_j}(S)$, where $Z'_j=C_2\cup\cdots\cup C_{n}$. 
Since $Z'_j$ is an
$A_{n-1}$-configuration  of $(-2)$-curves, \cite[Theorem 1.3]{IU05} implies that 
\begin{align*}
\Psi_j \circ \Phi_j\in &((\Span{B'_j, \Pic S} \rtimes \Aut S) \times \Z)\cap \Ker \iota_U \\
\cong & B'_j\rtimes \Span{\otimes \mathcal{O}_S(C_i)\mid i=1,\ldots, n}\\
\subset & B_j,
\end{align*}
where we put
$
B'_j:=\Span{T_{\mathcal{O}_{C_i}(a)}\mid i=2,\ldots,n}
$
and the last inclusion is a consequence of 
\cite[Proposition 4.18 (i)]{IU05}.
Hence, we know that 
$
\Phi_j\in B_j.
$

We apply this argument for each $Z_j$ ($1\le j\le M$), and 
then we can see that
$$
\Psi_1\circ \cdots \circ \Psi_M\circ \Phi\in B
$$
and hence
$\Phi \in B$. 
Therefore, we have
$
\Ker\iota_U\subset B
$
as desired.

\begin{rem}
In the $A_n$-case of Proposition 4.2, that is \cite[Proposition 1.7]{IU05}, 
we do not need the assumption that  
$\Phi$ preserves the cohomology class $\ch(\mc{O}_x)$, since it is always true.
To the contrary, in the $\tilde{A}_n$-case, 
the image of $\Phi^{\mathcal{U}}_{J_S(a,b)\to S}\circ \phi^*$ 
in \eqref{eqn:Phi^U} under $\iota_Z$ does not preserve
the cohomology class $\ch(\mc{O}_x)$.
The existence of such elements 
forces us to put this assumption in Proposition \ref{proposition:step -2 of A_n}.
 
The subgroup
$$
\Span{B,{(\Pic S/\Span{\otimes\mathcal{O}_S(F_c) \mid  c\in V})}\rtimes (\Aut S/\Aut_ZS)\times \Z[1]}
$$
of $\Auteq ^\dagger D_Z(S)$ preserves $H^4(S,\Q)$, and therefore it is strictly smaller than $\Auteq ^\dagger D_Z(S)$.
This is in contrast to the $A_n$-case of Theorem \ref{thm:IU05}.
\end{rem}   
   
We shall prove Proposition \ref{proposition:step -2 of A_n} in \S\ref{sec:-2}.
As an intermediate step,
we first show  the following.


\begin{prop}[cf.~Proposition 1.6 in \cite{IU05}]\label{proposition:step -1 of A_n}
Let $\alpha$ be a spherical object in $D_{Z_0}(S)$.
Then there are integers $a$, $b$ $(1\le b\le n)$ and $i$,
and there is an autoequivalence $\Psi\in B_0$ such that
$$
\Psi(\alpha) \cong \owe_{C_b}(a)[i].
$$
\end{prop}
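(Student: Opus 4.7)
The plan is to adapt the strategy of \cite[Proposition 1.6]{IU05}, which proves the analogous statement for an $A_n$-configuration of $(-2)$-curves, to our cyclic $\tilde A_{n-1}$-configuration. The argument has two stages: first, normalize the K-theory class of $\alpha$ using the action of $B_0$; second, classify spherical objects of the normalized class.

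For the first stage, I would compute the action of each generator $T_{\mc{O}_{C_i}(a)}$ on $K(D_{Z_0}(S))$ via \eqref{eqn:twist_Grothendieck}. Since $\alpha$ is supported on a fiber, $r(\alpha)=0$, and for rank-zero $\beta$ equation \eqref{eqn:euler} gives $\chi(\mc{O}_{C_i}(a),\beta) = -[C_i]\cdot c_1(\beta)$, independent of $a$. Hence the induced action on the sublattice $\Lambda := \bigoplus_i \Z[C_i]$ is the reflection $s_i\colon x \mapsto x + (x\cdot [C_i])[C_i]$, and these generate the affine Weyl group $W(\tilde A_{n-1})$ fixing $[F]=\sum[C_i]$. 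Sphericalness of $\alpha$ together with \eqref{eqn:euler} yields $c_1(\alpha)^2=-\chi(\alpha,\alpha)=-2$, so $c_1(\alpha)$ is a norm-$(-2)$ element of $\Lambda$, hence a real root of $\tilde A_{n-1}$. Because $W(\tilde A_{n-1})$ acts transitively on its real roots and each $[C_b]$ is one, there exists $\Psi_1 \in B_0$ with $c_1(\Psi_1(\alpha)) = [C_b]$ for some $b$, after absorbing a possible sign change coming from $[1]$ into the final integer $i$.

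For the second stage, I would show that any spherical object $\beta \in D_{Z_0}(S)$ of rank zero with $c_1(\beta) = [C_b]$ is isomorphic to $\mc{O}_{C_b}(a)[i]$ for some $a$ and $i$. The key point is to verify that the cohomology sheaves $H^p(\beta)$ are set-theoretically supported on $C_b$ alone; once this is established, $\beta$ becomes an object of $D_{C_b}(S)$, and via standard arguments on $C_b\cong\mathbb{P}^1$ (spherical objects on $\mathbb{P}^1$ are shifts of line bundles) one identifies it with a shift of a line bundle, whose degree $a$ is then pinned down by $\ch_2(\beta)$. To exclude components of the support along the adjacent curves $C_{b\pm 1}$, I would combine the rigidity $\Ext^1(\beta,\beta)=0$ with the Ext formulas \eqref{ali:ext^1} for line bundles on $(-2)$-curves and a filtration-by-support argument in the spirit of \cite[\S6]{IU05}, using the adjacency pattern \eqref{eqn:label}.

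The main obstacle is this second stage. In \cite{IU05} the induction on the support peels off a boundary component of the $A_n$-chain, but our cyclic $\tilde A_{n-1}$-configuration has no boundary, so this induction must be replaced. I expect that exploiting the $\Z[F]$-periodicity of real roots together with additional twist functors to unwind the support onto a single component will be required; alternatively, once $c_1(\beta)=[C_b]$ is fixed, the extra rigidity provided by sphericalness and the explicit Ext-computations \eqref{ali:ext^1} should already localize $\beta$ to $C_b$ directly, since any hypothetical extension onto $C_{b\pm 1}$ would contribute nontrivially to $\Ext^1(\beta,\beta)$ and violate rigidity.
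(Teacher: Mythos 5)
Your first stage is sound as lattice combinatorics (though note that for $n=2$ the affine Weyl group of $\tilde A_1$ is \emph{not} transitive on real roots --- there are two orbits --- so the correct formulation is that every real root is $W$-conjugate to $\pm$ a simple root, which is all you need). The genuine gap is in the second stage: it is false that a spherical $\beta\in D_{Z_0}(S)$ with $r(\beta)=0$ and $c_1(\beta)=[C_b]$ must be a shift of a line bundle on $C_b$. For $n\ge 3$ consider $\beta:=T_{\mc{O}_{C_{b+1}}}\circ T_{\mc{O}_{C_{b+1}}(-1)}(\mc{O}_{C_b})$. It is spherical, being the image of a spherical object under autoequivalences, and by \eqref{eqn:twist_Grothendieck} its class is $[\mc{O}_{C_b}]+[\mc{O}_{C_{b+1}}(-1)]-[\mc{O}_{C_{b+1}}]=[\mc{O}_{C_b}(-1)]$, so indeed $c_1(\beta)=[C_b]$. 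However, $L:=T_{\mc{O}_{C_{b+1}}(-1)}(\mc{O}_{C_b})$ is the nontrivial extension $0\to\mc{O}_{C_b}\to L\to\mc{O}_{C_{b+1}}(-1)\to 0$, i.e.\ the line bundle of bidegree $(1,-1)$ on the nodal curve $C_b\cup C_{b+1}$, and one computes $\Hom(\mc{O}_{C_{b+1}},L)=0$, $\Ext^2(\mc{O}_{C_{b+1}},L)\cong\Hom(\mc{O}_{C_{b+1}}(-1),\mc{O}_{C_{b+1}})^\vee\cong\C^2$, hence $\Ext^1(\mc{O}_{C_{b+1}},L)\cong\C$ from $\chi=1$. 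The triangle \eqref{eqn:twist_triangle} then gives $H^1(\beta)\cong\mc{O}_{C_{b+1}}^{\oplus 2}\ne 0$ and $\Supp\beta=C_b\cup C_{b+1}$. So sphericalness plus $c_1(\beta)=[C_b]$ does not localize $\beta$ to $C_b$: the vanishing of $\Ext^1(\beta,\beta)$ constrains the cohomology sheaves only through a spectral sequence in which the would-be obstruction classes cancel, and your closing heuristic fails. Your fallback --- ``unwind the support with additional twist functors'' --- is precisely the entire content of the proposition and is left unproved; the K-theoretic normalization of stage one buys essentially nothing because the fibres of the map $\alpha\mapsto c_1(\alpha)$ over a simple root still contain spherical objects of arbitrarily large support.

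For comparison, the paper never normalizes the K-theory class. It inducts on the length $l(\alpha)=\sum_i l_i(\alpha)$ of the cohomology at the generic points of the $C_i$, reducing the case $\Supp\alpha\ne Z_0$ to the $A_n$-result of \cite{IU05} and otherwise producing some $\Psi\in B_0$ with $l(\Psi(\alpha))<l(\alpha)$. The inputs are: the classification (Proposition \ref{prop:BBDG}) showing that every indecomposable summand of $\bigoplus_pH^p(\alpha)$ is a pushforward $\mc{S}_s(a_s,\dots,a_t)$ of a line bundle from a chain $\mb{I}_k$; the degree constraints of Lemmas \ref{lem:degree_restriction_n=2} and \ref{lem:degree_restriction} forced by rigidity of $\bigoplus_pH^p(\alpha)$; and Lemmas A and B, which locate a plateau-maximum of $i\mapsto l_i(\alpha)$ and exhibit an explicit length-decreasing twist supported there. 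Any repair of your plan would have to reprove a statement of this strength in stage two.
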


Proposition \ref{proposition:step -1 of A_n} is proved in \S\ref{sec:-1}.


\subsection{The cohomology sheaves of spherical objects}

\paragraph{Torsion free sheaves on a chain of projective lines}
We consider the cycle of lines
$
Z_0=C_1\cup\cdots \cup C_n
$
as an abstract variety and 
denote it by $\tilde{\mb{I}}_n$.
The curves $C_i$'s are labelled as in \eqref{eqn:label} when $n>2$. 

We denote by $\mb{I}_n$ a chain of $n$ projective lines.
We put 
$\mb{I}_n=C'_1\cup \cdots \cup C'_n$ 
such that 
each $C'_i$ is a projective line, and they satisfy
$$
C'_l\cdot C'_m = 
\begin{cases}
                1 & |l-m|=1\\
                0 & \text{ otherwise}. 
\end{cases}
$$

For a coherent sheaf
$\mc{R}$ on $\mb{I}_n$ or $\tilde{\mb{I}}_n$, 
we denote by $\deg_{C}\mc{R}$
the degree of the restriction $\mc{R}|_{C}$ to the component  $C$ of $\mb{I}_n$ or $\tilde{\mb{I}}_n$.
It is known that a line bundle $\mc{L}$ on  $\mb{I}_n$ is determined by the degree $\mc{L}|_{C}$ on all
the components $C$, that is,
$$
\Pic \mb{I}_n\cong \Z ^n.
$$ 
The line bundle corresponding to the vector  
$(a_1, \dots, a_n)\in \Z^n$ is denoted by 
$$
\mc{O}_{\mb{I}_n}(a_1, \dots, a_n).
$$
When we write $*$ instead of $a_l$, we do not specify the degree at $C'_l$.
For instance, when we write 
$$
\mathcal{R}_1 = \owe_{\mb{I}_3}(a, b, *),
$$
this means that $\mc{R}_1$ is a line bundle on $
\mb{I}_3$
such that $\deg_{C'_1}\mc{R}_1=a$, $\deg_{C'_2}\mc{R}_1=b$ and $\deg_{C'_3}\mc{R}_1$ is arbitrary.
The expression
$$
\mc{R}_2=\mc{O}_{C'_1\cup \cdots}(a,*)
$$
means that  $\mc{R}_2= \owe_{\mb{I}_k}(a,*, \dots, *)$ for some (not further specified) $k \ge 2$.
Note that the support of $\mc{R}_2$ is strictly larger than $C'_1$.
We often use figures
\begin{align}\label{ali:circle}
\xymatrix@R=1ex@M=0ex{  & C'_1 & C'_2 & C'_3 \\
   \mc{R}_1:  & 
\setlength{\unitlength}{1ex}
\begin{picture}(2, 2)(-1, -1)
\put(0,0){\circle{2}}
\put(0,0){\makebox(0,0){\tiny$a$}}
\end{picture} 
\ar@{-}[r] &
\setlength{\unitlength}{1ex}
\begin{picture}(2, 2)(-1, -1)
\put(0,0){\circle{2}}
\put(0,0){\makebox(0,0){\tiny$b$}}
\end{picture}
\ar@{-}[r] &{\no} & \\
\mc{R}_2:  &
\setlength{\unitlength}{1ex}
\begin{picture}(2, 2)(-1, -1)
\put(0,0){\circle{2}}
\put(0,0){\makebox(0,0){\tiny$a$}}
\end{picture}
\ar@{-}[r] & & & }
\end{align}
\noindent
to define $\mc{R}_1, \mc{R}_2$ above.
We use a dotted line
\begin{align}\label{ali:circle2} 
\xymatrix@R=1ex@M=0ex{  &C'_1  & \\
          \mc{R}_3:    &\setlength{\unitlength}{1ex}
\begin{picture}(2, 2)(-1, -1)
\put(0,0){\circle{2}}
\put(0,0){\makebox(0,0){\tiny$a$}}
\end{picture}
\ar@{--}[r]& &}
\end{align}
to indicate that $\mc{R}_3$ is either $\mc{O}_{C'_1}(a)$ or $\mc{O}_{C'_1\cup\cdots}(a,*)$.

\paragraph{Torsion free, but not locally free sheaves on $\tilde{\mb{I}}_n$}
For any $m\in \Z$ satisfying $m- i \in n\Z$ with some $1\le i\le n$, we define  
$$C_m:=C_i.$$


\begin{prop}[Theorem 19 in \cite{BBDG}]\label{prop:BBDG}
If an indecomposable torsion free $\mc{O}_{\tilde{\mb{I}}_n}$-module 
$\mc{S}$ is not locally free, then there is a finite 
surjective morphism 
$$
p_k\colon \mathbb{I}_k \to \tilde{\mb{I}}_n,
$$ 
some integer $s$, and a line bundle $\mc{L}$ on $\mathbb{I}_k$ such that 
$
p_k(C'_l)=C_{l+s-1}
$
with 
$l=1,\ldots, k$, and
$$\mc{S}\cong p_{k*}\mc{L}.$$
\end{prop}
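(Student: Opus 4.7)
The plan is to prove the statement by a local--global analysis of torsion-free sheaves on the nodal curve $\tilde{\mb{I}}_n$, followed by a partial normalization that converts such a sheaf into a line bundle on a chain of projective lines.

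First, I would record the local classification at the nodes. Let $p$ be a node of $\tilde{\mb{I}}_n$ and consider the complete local ring $\widehat{\mc{O}}_{\tilde{\mb{I}}_n,p}\cong\C[[x,y]]/(xy)$. This is an $A_1$-curve singularity, and it is classical that the only indecomposable finitely generated torsion-free modules over it are the free module of rank one and the maximal ideal $\mk{m}=(x,y)$; moreover $\mk{m}$ is isomorphic to the pushforward of the structure sheaf from the normalization of $\Spec\widehat{\mc{O}}_{\tilde{\mb{I}}_n,p}$. Consequently, if $\mc{S}$ is an indecomposable torsion-free sheaf on $\tilde{\mb{I}}_n$, then at each node $p$ either $\mc{S}$ is locally free or its completed stalk is isomorphic to $\mk{m}$.

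Second, I would carry out a partial normalization. Let $N\subset\tilde{\mb{I}}_n$ be the (non-empty, by the hypothesis that $\mc{S}$ is not locally free) set of nodes at which $\mc{S}$ fails to be locally free, and let $\nu\colon Y\to\tilde{\mb{I}}_n$ denote the partial normalization at exactly the points of $N$, so that $Y$ is a disjoint union of chains of projective lines. Using the local description above, one verifies that $\tilde{\mc{S}}:=\nu^*\mc{S}/(\text{torsion})$ is locally free on $Y$ and that the adjunction morphism induces an isomorphism $\nu_*\tilde{\mc{S}}\cong \mc{S}$. The indecomposability of $\mc{S}$ forces $\tilde{\mc{S}}$ to be supported on a single connected component $\mb{I}_k$ of $Y$ and to be indecomposable there, hence to be a single line bundle $\mc{L}$, since the indecomposable locally free sheaves on a chain $\mb{I}_k$ are precisely line bundles. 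Setting $p_k:=\nu|_{\mb{I}_k}$ and letting $s$ record the starting component gives the stated conclusion in the case that the generic rank of $\mc{S}$ is one.

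Finally, I would address indecomposable torsion-free sheaves of higher generic rank, which is what allows $k$ in the statement to exceed $n$. Here the naive partial normalization does not yield a rank-one object; one must first factor through a finite cyclic cover $\tilde{\mb{I}}_{mn}\to\tilde{\mb{I}}_n$ winding $m$ times around the cycle, and only afterwards normalize. The indecomposable torsion-free sheaves on $\tilde{\mb{I}}_n$ then appear as pushforwards of line bundles along compositions of such covers with partial normalizations. The main obstacle is precisely establishing that every indecomposable example is obtained in this way: this is the ``band module'' phenomenon from the tame representation theory of cyclic nodal curves, and the matrix-problem analysis carried out in \cite{BBDG} is what produces the exhaustive description (and explains why the integer $k$ in the conclusion need not be bounded by $n$).
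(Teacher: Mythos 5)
First, a point of comparison: the paper offers no proof of this statement at all --- Proposition 2.12 is quoted verbatim as Theorem 19 of \cite{BBDG}, so there is no in-paper argument to measure your proposal against. Judged on its own terms, your partial-normalization argument is the standard and correct way to see the ``string'' sheaves, but only in the case of generic rank one, and it contains a local misstatement. Over $R=\C[[x,y]]/(xy)$ the maximal ideal $\mk{m}=(x,y)$ is \emph{not} indecomposable: $\mk{m}=Rx\oplus Ry\cong \C[[x]]\oplus\C[[y]]$, which is indeed the pushforward of the structure sheaf of the normalization; the indecomposable torsion-free modules are $R$, $\C[[x]]$ and $\C[[y]]$. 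Consequently, global indecomposability of $\mc{S}$ does not control its completed stalks node by node (those stalks are in general of the form $R^a\oplus\C[[x]]^b\oplus\C[[y]]^c$), and your dichotomy ``locally free or stalk $\cong\mk{m}$'' is valid only after you have already restricted to sheaves of generic rank one on both branches at the node. With that restriction in place, the second paragraph (partial normalization at the non-locally-free nodes, $\nu_*\tilde{\mc{S}}\cong\mc{S}$, indecomposability forcing a single connected chain) is fine.

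The genuine gap is the higher-rank case, which is precisely the case $k>n$ of the statement and which the paper actually uses: for instance $\mc{S}_{C_1\cup C_2\cup C_3\cup C_1}(0,0,0,0)$ on a cycle of length $3$ appears in \S 5.1, and Lemma 4.9 places no bound on $t-s$. For these sheaves your third paragraph does not give an argument; it describes the expected shape of the answer (winding chains, the band/string dichotomy) and then explicitly defers the exhaustiveness --- the entire content of the theorem --- to the matrix-problem analysis of \cite{BBDG}, i.e.\ to the very result being proved. So as a self-contained proof the proposal is incomplete; as an explanation of why the statement is plausible and where the real difficulty sits, it is accurate, and deferring to \cite{BBDG} is in the end no less than what the paper itself does.
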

In the situation of Proposition \ref{prop:BBDG},
assume that 
$$
\mc{L}\cong\mc{O}_{\mb{I}_k}(a_s,\ldots , a_{s+k-1}).
$$
In this case, we set
$$
\mc{S}_s(a_s,\ldots , a_{s+k-1}):=\mc {S},\quad \text{ or }\quad
\mc{S}_{C_s\cup\cdots\cup C_{s+k-1}}(a_s,\ldots , a_{s+k-1}):=\mc {S}.
$$
We can see that
\begin{equation}\label{eqn:restriction_S_i}
\mc{S}_s(a_s,\ldots , a_{s+k-1})|_{C_m}\cong 
\bigoplus_{l\in m+n\Z,s\le l\le s+k-1}\mc{O}_{C_m}(a_{l}).
\end{equation}
Notice that, for $k<n$, we have
$$
\mc{S}_{C_1\cup\cdots \cup C_{k}}(0,\ldots,0)=\mc{S}_1(\overbrace{0,\ldots,0}^{k})\cong \mc{O}_{C_1\cup\cdots\cup C_{k}},
$$ 
but in contrast
$$
\mc{S}_{C_1\cup\cdots \cup C_n}(0,\ldots,0)=\mc{S}_1(\overbrace{0,\ldots,0}^{n})\not\cong \mc{O}_{\tilde{\mb{I}}_n}.
$$

\paragraph{The cohomology sheaves of spherical objects}

Henceforth, we freely use the notations and results on 
$\tilde{\mb{I}}_n$ mentioned above. 

For a complex analytic open subset $U$ of $S$ and a spherical object $\alpha\in D(S)$, 
let 
$$
\Sigma (\alpha)_{U}
$$
be the set of all indecomposable summands of the coherent $\mc{O}_U$-module $\bigoplus_p H^p(\alpha)|_{U}$. If $U=S$, we just denote it by 
$$\Sigma (\alpha).$$ 
We frequently use the following.


\begin{lem}\label{lem:cohom_spherical}
\begin{enumerate}
\item
For a spherical object $\alpha\in D_{Z_0}(S)$, the  direct sum 
$\bigoplus_p H^p(\alpha)$ of its cohomology sheaves is rigid as an $\mc{O}_S$-module, 
and a torsion free $\mathcal{O}_{Z_0}$-module. 
\item
Any $\mc{R}\in \Sigma(\alpha)$ cannot be a locally free 
$\mc O_{Z_0}$-module, and
it is of the form $\mc{S}_s(a_s,a_{s+1},\ldots , a_{t})$ for some integers 
$i,j$ with $C_{s-1}\ne C_t$.
\end{enumerate}
\end{lem}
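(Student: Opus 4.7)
The plan is to derive both assertions from sphericity of $\alpha$ combined with the Euler-form calculations of \S\ref{subsec:euler_form} and Serre duality. The geometric linchpin is the triviality $\omega_S|_{Z_0}\cong\mc{O}_{Z_0}$: since $Z_0$ is a fiber of $\pi$ with components that are $(-2)$-curves, adjunction together with $\mc{O}_S(Z_0)|_{Z_0}\cong\mc{O}_{Z_0}$ (because $Z_0$ is numerically equivalent to a smooth fiber) gives $\omega_{Z_0}\cong\omega_S|_{Z_0}$, while $\omega_{Z_0}\cong\mc{O}_{Z_0}$ by the arithmetic-genus-$1$ character of a cycle of $\PP^1$'s. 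Consequently, for any $\mc{F},\mc{G}\in D_{Z_0}(S)$ one has Serre duality $\Ext^k_S(\mc{F},\mc{G})\cong\Ext^{2-k}_S(\mc{G},\mc{F})^\vee$.

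For part (i), the rigidity of $\mc{F}:=\bigoplus_p H^p(\alpha)$ is the main technical step. I would run the spectral sequence
\[
E_2^{p,q}=\bigoplus_i\Ext^p_S(H^i(\alpha),H^{i+q}(\alpha))\;\Longrightarrow\;\Ext^{p+q}_S(\alpha,\alpha),
\]
combine the vanishing $\Ext^1_S(\alpha,\alpha)=0$ with the Serre-induced symmetry $E_2^{p,q}\cong(E_2^{2-p,-q})^\vee$, and exploit simplicity $\End(\alpha)=\C$ to track the $d_2$-differentials; following the template of the analogous $A_n$-argument in \cite{IU05}, this forces $\Ext^1_S(H^i,H^j)=0$ for all pairs $i,j$. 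Torsion-freeness then follows from the fact that a rigid coherent sheaf on a smooth surface admits no $0$-dimensional subsheaves: any $0$-dimensional $\mc{T}\subset\mc{F}$ would, via the exact sequence $0\to\mc{T}\to\mc{F}\to\mc{F}/\mc{T}\to 0$ and the non-vanishing $\Ext^1_S(\mc{T},\mc{T})\ne 0$, produce a nonzero class in $\Ext^1_S(\mc{F},\mc{F})$.

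For part (ii), suppose first that $\mc{R}\in\Sigma(\alpha)$ is locally free of rank $r\ge 1$ on $Z_0$. Then $\mc{R}$ is supported on all of $Z_0$ with constant generic rank $r$, so $c_1(\mc{R})=r[Z_0]\in H^2(S,\Z)$. Since $Z_0$ is a fiber, $[Z_0]^2=0$, so formula \eqref{eqn:euler} gives $\chi_S(\mc{R},\mc{R})=0$. Combined with $\Hom_S(\mc{R},\mc{R})\supset\C$ and Serre duality $\Ext^2_S(\mc{R},\mc{R})\cong\Hom_S(\mc{R},\mc{R})^\vee$, this yields $\dim\Ext^1_S(\mc{R},\mc{R})\ge 2$, contradicting the rigidity of $\mc{F}$ (which restricts to its direct summand $\mc{R}$). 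Hence $\mc{R}$ is not locally free, and part (i) combined with Proposition \ref{prop:BBDG} gives $\mc{R}\cong p_{k*}\mc{L}=\mc{S}_s(a_s,\ldots,a_{s+k-1})$ for some finite surjection $p_k\colon\mb{I}_k\to\tilde{\mb{I}}_n$ and line bundle $\mc{L}$ on $\mb{I}_k$.

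The condition $C_{s-1}\ne C_{s+k-1}$, equivalently $k\not\in n\Z$, is checked by contradiction: if $k=dn$ for some $d\ge 1$, then $p_k$ covers each component $C_i$ exactly $d$ times, so $c_1(\mc{R})=d[Z_0]$ and again $c_1(\mc{R})^2=0$ and $\chi_S(\mc{R},\mc{R})=0$. The same Serre-duality argument (using $\Hom_S(\mc{R},\mc{R})\supset\C$ since $\End(\mc{R})$ is local) forces $\Ext^1_S(\mc{R},\mc{R})\ge 2$, a contradiction. The main obstacle is the rigidity step of (i): isolating $\Ext^1_S(H^i,H^j)=0$ for every pair $i,j$ from the single hypothesis $\Ext^1_S(\alpha,\alpha)=0$ requires careful tracking of the $d_2$-differentials on the $E_2$-page via the Serre-duality symmetry, and is the step most likely to require adapting the techniques of \cite{IU05} in a non-trivial way to the cyclic $\tilde{A}_n$ setting.
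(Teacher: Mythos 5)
Your part (ii) is essentially the paper's own argument: the paper disposes of both the locally free case and the case $C_{s-1}=C_t$ in one stroke by observing that a torsion-free sheaf $\mc{E}$ on $Z_0$ with $c_1(\mc{E})$ a multiple of $[Z_0]$ has $\chi_S(\mc{E},\mc{E})=-c_1(\mc{E})^2=0$ and hence cannot be rigid; your two separate Chern-class computations (rank $r$ locally free, and $k\in n\Z$ for the pushforward) are exactly this, and the extra appeal to Serre duality is harmless but unnecessary, since $\chi=0$ together with $\hom\ge 1$ and $\ext^2\ge 0$ already forces $\ext^1\ge 1$.

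For part (i) there is a genuine omission. The statement asserts that $\bigoplus_p H^p(\alpha)$ is a torsion-free $\mc{O}_{Z_0}$-\emph{module}, and the nontrivial content is twofold: that the cohomology sheaves are actually annihilated by the ideal sheaf of the reduced fiber $Z_0$ (not merely set-theoretically supported on it), and that they then have no $0$-dimensional torsion. Your argument only addresses the second point. The first is where \cite[Lemma 4.8]{IU05} and the hypothesis that the fiber is non-multiple enter: for a multiple fiber $mZ_0$ one can only conclude that the cohomology sheaves are $\mc{O}_{mZ_0}$-modules, and the lemma fails --- this is precisely the content of Remark \ref{rem:multiple} and the reason for the non-multiplicity assumption in Theorem \ref{thm:main}. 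Your proposal never uses non-multiplicity, so it cannot be complete as written. Separately, the rigidity of the cohomology sheaves, which you flag as the main obstacle and propose to re-derive by tracking $d_2$-differentials, is obtained in the paper as a direct citation of \cite[Proposition 4.5, Lemmas 4.8, 4.9]{IU05}; those results are local statements about simple rigid objects and do not require adaptation to the cyclic configuration, so the difficulty you anticipate there is not where the real work lies.
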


\begin{proof}
(i) This is a direct consequence of  \cite[Proposition 4.5, Lemmas 4.8, 4.9]{IU05}.

(ii) 
For a torsion free sheaf $\mc{E}$ on $Z_0$ such that
 $c_1(\mc{E})$ is some multiple of $[Z_0]$,
we can see that $\chi _S(\mc{E},\mc{E})=-c_1(\mc{E})^2=0$, which implies that 
$\mc{E}$ is not rigid. Hence, the torsion freeness and the rigidity of $\mc{R}$ by (i) imply the result.
\end{proof}

\begin{rem}\label{rem:multiple}
Suppose that $S$ has a multiple fiber $mZ_0$, i.e. of type $_m\mathrm{I}_n$ for some $m>1, n>0$.
Then \cite[Lemma 4.8]{IU05} implies that $\bigoplus_p H^p(\alpha)$ is an $\mathcal{O}_{mZ_0}$-module.
In particular, we cannot conclude Lemma \ref{lem:cohom_spherical} (i).
This is the reason why we assume that 
each reducible fiber is non-multiple in Theorem \ref{thm:main}. 
\end{rem}

For a connected union of $(-2)$-curves 
$$
Z':=C_s\cup C_{s+1}\cup\cdots\cup C_t 
$$
contained in $Z_0$, take a sufficiently small complex analytic neighbourhood of $Z'$, and we denote it by 
$$
U_{s,\ldots,t}.
$$
Here, sufficiently small means that 
$$
Z_0\cap  U_{s,\ldots,t}\cong D_{s-1} \cup C_s\cup \cdots \cup C_t\cup D_{t+1}
$$
where $D_i$ is 
a one-dimensional complex disk.


For a given $\beta\in D_{Z_0}(S)$, define
$$
l_i(\beta):= \sum_p \length_{\mc{O}_{S, \eta_i}} H^p(\beta)_{\eta_i}
$$
for each curve $C_i$, 
where $\owe_{S,\eta _i}$ is the local ring of $S$ at the generic point $\eta _i$ of $C_i$,
$H^p (\beta)_{\eta_i}$ is the stalk over $\eta_i$
and $\length _{\owe_{S,\eta _i}}$ measures the length over $\owe_{S,\eta _i}$.
Furthermore we define 
$$
l(\beta):=\sum _{i=1}^nl_i(\beta),
$$
and also define 
$$
l(\beta|_U):=\sum _{i\colon U\cap C_i\ne \emptyset}l_i(\beta)
$$
for a complex analytic open subset $U$ of $S$.
These invariants play important roles in the induction step in the proofs
 of Propositions \ref{proposition:step -2 of A_n} and \ref{proposition:step -1 of A_n}.

Using Lemma \ref{lem:cohom_spherical}, we can deduce strong
conditions on the elements of $\Sigma(\alpha)$
 as Lemma \ref{lem:degree_restriction} below.
Before giving a general statement in Lemma \ref{lem:degree_restriction},
we first give a special one for the case $n=2$, since 
the proof is slightly different from the one in the case $n>2$.


\begin{lem}\label{lem:degree_restriction_n=2}
Let us consider the case $n=2$, i.e. $Z_0\cong \tilde{\mb{I}}_2$.
and let $\alpha$ be a spherical object in $D_{Z_0}(S)$.
Assume that there is an element
$$
\mc{S}:=\mc{S}_{s}(a_{s},a_{s+1},\ldots , a_{t})
\in \Sigma(\alpha)
$$ 
with $s\ne t$, which means that $l(\mc{S})>1$. 
Then there are integers 
$a$ and $i=1$ or $2$ satisfying the following conditions:
\begin{enumerate}
\item
We have $C_i=C_s=C_t$ and $a=a_s=a_t$.
The integer $i$ (resp. The integer $a$) does not depend 
on the choice of $\mc{S}\in \Sigma (\alpha)$ (resp.  $\mc{S}\in \Sigma (\alpha)$ with $l(\mc{S})>1$).
\item
Assume that there is an element
$
\mc{S}'\in \Sigma(\alpha)
$ 
with $l(\mc{S}')=1$. Then 
$\mc{S}'=\mc{O}_{C_i}(b)$ with $b=a$ or $a-1$.
\item
In the situation of (ii), assume furthermore that the above $\mc{S}$ satisfies $l(\mc{S})>3$. Then 
we have 
\begin{equation*}\label{eqn:==}
a_{s+2}=a_{s+4}=\cdots=a_{t-2}=b+1.
\end{equation*}
%
\end{enumerate}
\end{lem}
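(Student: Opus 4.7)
The plan is to exploit the rigidity of $\bigoplus_p H^p(\alpha)$ from Lemma~\ref{lem:cohom_spherical}(i), which gives $\Ext^1_S(\mc{R}_1, \mc{R}_2) = 0$ for all pairs of summands $\mc{R}_1, \mc{R}_2 \in \Sigma(\alpha)$. Since $Z_0$ is a fiber of the elliptic fibration, adjunction yields $\omega_S|_{Z_0} \cong \mc{O}_{Z_0}$, so Serre duality gives $\Ext^2_S(\mc{R}_1, \mc{R}_2) \cong \Hom_S(\mc{R}_2, \mc{R}_1)^\vee$. Combined with the Euler form \eqref{eqn:euler}, these produce the numerical identity
\begin{equation*}
\dim \Hom_S(\mc{R}_1, \mc{R}_2) + \dim \Hom_S(\mc{R}_2, \mc{R}_1) = -c_1(\mc{R}_1)\cdot c_1(\mc{R}_2),
\end{equation*}
which is the workhorse of the proof.

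For part (i), write $c_1(\mc{S}) = r_1 C_1 + r_2 C_2$ where $r_m$ is the generic length of $\mc{S}$ along $C_m$, read off from \eqref{eqn:restriction_S_i}. Using $C_i^2 = -2$ and $C_1 \cdot C_2 = 2$ (special to $n = 2$), one computes $c_1(\mc{S})^2 = -2(r_1 - r_2)^2$. Since each indecomposable torsion free non-locally-free sheaf on $\tilde{\mb{I}}_2$ has endomorphism ring $\mb{C}$, the identity with $\mc{R}_1 = \mc{R}_2 = \mc{S}$ forces $(r_1 - r_2)^2 = 1$, equivalently $k = t - s + 1$ is odd, that is $C_s = C_t$. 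To extract the equality $a_s = a_t$, the plan is to apply $\RHom_S(\mc{S}, -)$ to the two short exact sequences
\begin{equation*}
0 \to \mc{O}_{C_s}(a_s - 1) \to \mc{S} \to \mc{S}_{s+1}(a_{s+1}, \dots, a_t) \to 0
\end{equation*}
and its symmetric counterpart peeling off $C_t$ from the right, and to use that the connecting $\Ext^1$ terms must vanish whenever the quotients or subsheaves appear in $\Sigma(\alpha)$; a parity comparison of the two endpoints then forces the equality. The independence of $i$ and $a$ from the choice of $\mc{S} \in \Sigma(\alpha)$ with $l(\mc{S}) > 1$ follows by applying the key identity to two such summands simultaneously.

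For part (ii), substituting $\mc{R}_1 = \mc{S}$ and $\mc{R}_2 = \mc{O}_{C_j}(b)$ gives right-hand side $2(r_i - r_{i+1}) = 2$ when $j = i$ and $-2$ when $j = i+1$, using $r_i > r_{i+1}$ and $C_s = C_t = C_i$. Since the left-hand side is non-negative, $j \ne i$ is ruled out. With $j = i$, the restriction $\mc{S}|_{C_i}$ from \eqref{eqn:restriction_S_i} contains $\mc{O}_{C_i}(a_s) = \mc{O}_{C_i}(a) = \mc{O}_{C_i}(a_t)$ as direct summands; vanishing of $\Ext^1_S(\mc{O}_{C_i}(a), \mc{O}_{C_i}(b))$ and $\Ext^1_S(\mc{O}_{C_i}(b), \mc{O}_{C_i}(a))$ combined with \eqref{ali:ext^1} forces $|a - b| \le 1$, and an asymmetric argument using the exact sequences of part (i) rules out $b = a + 1$. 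Part (iii) proceeds analogously: each intermediate summand $\mc{O}_{C_i}(a_{s+2j})$ of $\mc{S}|_{C_i}$ must satisfy $|a_{s+2j} - b| \le 1$ by the same Ext vanishing, and the indecomposable structure of $\mc{S}$ at the interior nodes (which locally glues $\mc{O}_{C_i}(a_{s+2j})$ to neighbouring summands $\mc{O}_{C_{i+1}}(a_{s+2j\pm 1})$) selects the value $a_{s+2j} = b + 1$ rather than $b$ or $b - 1$.

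The main obstacle will be the endpoint/parity analysis in part (i) and the asymmetric argument in part (ii), since the two nodes of $\tilde{\mb{I}}_2$ each contribute separately to local Ext groups and one must carefully identify which of them lies in the interior of $\Supp \mc{S}$ versus on its boundary. The cleanest route is through the adjunction $\RHom_S(p_{k*}\mc{L}, -) \cong \RHom_{\mb{I}_k}(\mc{L}, p_k^!(-))$, which reduces global Ext computations on $S$ to computations on the chain $\mb{I}_k$ where the degree data $(a_s, \dots, a_{s+k-1})$ appears explicitly; this device is precisely what requires the separate treatment of $n = 2$ versus $n > 2$, due to the presence of two gluing nodes rather than one.
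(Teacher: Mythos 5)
Your overall strategy is the paper's: the workhorse identity $\dim\Hom_S(\mc{R}_1,\mc{R}_2)+\dim\Hom_S(\mc{R}_2,\mc{R}_1)=-c_1(\mc{R}_1)\cdot c_1(\mc{R}_2)$ is exactly the paper's equation \eqref{eqn:S_1_S_2}, obtained the same way from rigidity, Serre duality and \eqref{eqn:euler}. One genuinely different (and valid) ingredient is your derivation of $C_s=C_t$: you compute $c_1(\mc{S})^2=-2(r_1-r_2)^2$ and conclude from $\chi(\mc{S},\mc{S})=2\dim\End(\mc{S})>0$ that $|r_1-r_2|=1$, whereas the paper exhibits an explicit non-split extension of $\mc{S}_1$ by $\mc{S}_2$ contradicting rigidity when $C_{s_1}=C_{t_2+1}$. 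Your route is fine, but note that your stated justification --- that \emph{every} indecomposable torsion-free non-locally-free sheaf on $\tilde{\mb{I}}_2$ has endomorphism ring $\C$ --- is an unproved (and, as a blanket statement, false) claim; what you actually need is only $\dim\End(\mc{S})\ge 1$ together with $(r_1-r_2)^2\le 1$, after which the identity forces both $\dim\End(\mc{S})=1$ and $(r_1-r_2)^2=1$. The cross-statements (two summands have the same distinguished component $C_i$, and $\Supp\mc{S}'=C_i$ in (ii)) do follow from the sign of $-c_1(\mc{R}_1)\cdot c_1(\mc{R}_2)$ as you indicate.

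The genuine gaps are in the steps you defer to sketches. First, for $a_s=a_t$ you propose to apply $\RHom_S(\mc{S},-)$ to $0\to\mc{O}_{C_s}(a_s-1)\to\mc{S}\to\mc{S}_{s+1}(a_{s+1},\dots,a_t)\to 0$ and use vanishing of connecting $\Ext^1$ terms ``whenever the quotients or subsheaves appear in $\Sigma(\alpha)$''; but these quotients and subsheaves are in general \emph{not} summands of $\bigoplus_p H^p(\alpha)$, so rigidity tells you nothing about them, and the ``parity comparison'' is never specified. The paper's argument is direct: if $a_s>a_t$, the composite $\mc{S}\twoheadrightarrow\mc{O}_{C_t}(a_t)=\mc{O}_{C_s}(a_t)\hookrightarrow\mc{O}_{C_s}(a_s-1)\hookrightarrow\mc{S}$ is a non-scalar endomorphism, contradicting $\dim\End(\mc{S})=1$ (and symmetrically for $a_s<a_t$). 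Second, in (ii) you rule out $b=a+1$ by an unspecified ``asymmetric argument''; the actual point is that each of the \emph{two} end quotients $\mc{S}\twoheadrightarrow\mc{O}_{C_i}(a)$ contributes $\dim\Hom(\mc{O}_{C_i}(a),\mc{O}_{C_i}(a+1))=2$, so $\dim\Hom_S(\mc{S},\mc{S}')\ge 4>2$. Third, in (iii) the selection of $a_{s+2j}=b+1$ is attributed to ``the indecomposable structure at the interior nodes,'' but what is needed is the Hom count forced by the identity: in the case $b=a$ one gets $\dim\Hom_S(\mc{S},\mc{S}')=2$ and $\dim\Hom_S(\mc{S}',\mc{S})=0$ (and $(0,2)$ when $b=a-1$), and since the two end components already account for the entire nonzero Hom space, every interior summand $\mc{O}_{C_i}(a_{s+2j})$ of $\mc{S}|_{C_i}$ must contribute zero in one direction and its interior subsheaf $\mc{O}_{C_i}(a_{s+2j}-2)$ zero in the other, which pins down $a_{s+2j}=b+1$. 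Without these explicit dimension counts, parts (i)--(iii) are not actually established.
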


\begin{proof}
Take elements
$$
\mc{S}_1:=\mc{S}_{s_1}(a_{s_1},\ldots , a_{t_1}),\quad \mc{S}_{2}:=\mc{S}_{s_2}(b_{s_2},\ldots , b_{t_2})
\in \Sigma(\alpha).
$$
First, we show  $C_{s_1}=C_{t_2}$. To the contrary, suppose that $C_{s_1}\ne C_{t_2}$, namely  $C_{s_1}=C_{t_2+1}$ holds. 
Then there is a non-split exact sequence
$$0\to \mc{S}_2\to \mc{S}_{s_2}(b_{s_2},\ldots ,b_{t_2-1}, b_{t_2}+1,a_{s_1},\ldots , a_{t_1})\to \mc{S}_1\to 0,$$
which gives a contradiction with the rigidity of 
$\mc{S}_1\oplus \mc{S}_2$ in Lemma \ref{lem:cohom_spherical} (i). 
Thus, we obtain  $C_{s_1}=C_{t_2}$.
 
 If we replace the role of $\mc{S}_1$ and $\mc{S}_2$ in the above argument, then  
 we obtain  $C_{s_2}=C_{t_1}$. Furthermore, consider the special case $\mc{S}_1=\mc{S}_2$. 
 Then  this particularly implies the equalities 
\begin{equation}\label{eqn:C_s=C_t}
C_{s_1}=C_{t_1}=C_{s_2}=C_{t_2}.
\end{equation}
Hence, we obtain the assertion $C_i=C_s=C_t$ in (i), and 
the assertion $\Supp \mc{S}'=C_i$ in (iii).
The equalities \eqref{eqn:C_s=C_t} and the rigidity of $\mc{S}_1\oplus \mc{S}_2$
also imply that 
\begin{equation}\label{eqn:S_1_S_2}
2=-c_1(\mc{S}_1)c_1(\mc{S}_2)=\chi_S(\mc{S}_1,\mc{S}_2)= 
\dim\Hom_S (\mc{S}_1,\mc{S}_2)+\dim\Ext^2_S(\mc{S}_1,\mc{S}_2).
\end{equation}
Note that $\Ext^2_S(\mc{S}_1,\mc{S}_2)\cong \Hom_S (\mc{S}_2,\mc{S}_1)^\vee$ by the Grothendieck--Serre duality.

(i)
Now suppose that $l(\mc{S}_1)>1$ and $l(\mc{S}_2)>1$. 
If $a_{s_1}> a_{t_1}$,
there is a morphism
$$
\mc{S}_1\twoheadrightarrow \mc{O}_{C_{t_1}}(a_{t_1})= \mc{O}_{C_{s_1}}(a_{t_1})\hookrightarrow\mc{S}_{1}.
$$
And hence, we have $\dim \Hom_S(\mc{S}_1,\mc{S}_1)\ge 2$, which contradicts \eqref{eqn:S_1_S_2} in the case $\mc{S}_1=\mc{S}_2$.
Then, we conclude that $a_{s_1}=a_{t_1}$ and
$b_{s_2}=b_{t_2}$. 

Furthermore, if $a_{s_1}\ne b_{s_2}$, we see that
$$
\dim\Hom_S (\mc{S}_1,\mc{S}_2)\ge 4 \mbox{ or } 
\dim\Ext^2_S(\mc{S}_1,\mc{S}_2)\ge 4,
$$ 
which gives a contradiction
with \eqref{eqn:S_1_S_2}.

(ii) Assume that $b>a$.  Then $\dim\Hom_S(\mc{S},\mc{S}')\ge 4$. Similarly, if $b<a-1$,
we have  $\dim\Hom_S(\mc{S}',\mc{S})\ge 4$. Hence, both possibilities contradict \eqref{eqn:S_1_S_2}, and we conclude that $b=a$ or $a-1$ as asserted.


(iii) Suppose that $b=a$ in (ii). Then  by  \eqref{eqn:S_1_S_2}  we see
$$
\dim \Hom_S(\mc{S},\mc{S}')=2 \text{ and } \dim \Hom_S(\mc{S}',\mc{S})=0.
$$ This impies the conclusion.
Next, suppose that $b=a-1$. Then, 
 \eqref{eqn:S_1_S_2} implies 
$$
\dim \Hom_S(\mc{S},\mc{S}')=0 \text{ and } \dim \Hom_S(\mc{S}',\mc{S})=2.
$$
This implies the conclusion.
\end{proof}

Let us proceed a general statement for any $n$.


\begin{lem}\label{lem:degree_restriction}
Let $\alpha$ be a spherical object in $D_{Z_0}(S)$.
Take elements
$$
\mc{S}_1:=\mc{S}_{s_1}(a_{s_1},\ldots , a_{t_1}),\quad \mc{S}_{2}:=\mc{S}_{s_2}(b_{s_2},\ldots , b_{t_2})
\in \Sigma(\alpha).
$$ 
\begin{enumerate}
\item
Let us take a reduced closed subscheme $Z'=C_i\cup C_{i+1}\cup\cdots \cup C_j$ of $Z_0$ for some $i,j\in\Z$ with 
$0\le j-i\le n-1$.
Then
$(\mc{S}_1\oplus 
\mc{S}_2)|_{Z'}$ is a rigid  $\mc{O}_S$-module. 
\item
We have $C_{t_1}\ne C_{s_2-1}$.
\item
For integers
$l,m$ satisfying
 $s_1\le l\le t_1$ and $s_2\le m\le t_2$ such that 
$C_l=C_m$, we have the following.
\begin{enumerate}
\item
$
|a_l-b_m|\le 1.
$
\item
Suppose that $s_1< l\le t_1$ and $s_2= m\le t_2$:
\[ 
\xymatrix@R=1ex@M=0ex{  & C_{l-1} & C_l & C_{l+1} \\
\text{A part of }   \mc{S}_1:  & 
\setlength{\unitlength}{1ex}
{}
\ar@{-}[r] &
\setlength{\unitlength}{1ex}
\begin{picture}(2, 2)(-1, -1)
\put(0,0){\circle{2}}
\put(0,0){\makebox(0,0){\tiny$a_l$}}
\end{picture}
\ar@{--}[r] & & \\
\text{The beginning of } \mc{S}_2:  &
&
\setlength{\unitlength}{1ex}
\begin{picture}(2, 2)(-1, -1)
\put(0,0){\circle{2}}
\put(0,0){\makebox(0,0){\tiny$b_m$}}
\end{picture}
\ar@{--}[r] & &  }
\]

Then we have
$a_l\ge b_m$.
\item
Suppose that $s_1< l< t_1$ and $s_2= m= t_2$: 
\[ 
\xymatrix@R=1ex@M=0ex{  & C_{l-1} & C_l & C_{l+1} \\
\text{A part of }   \mc{S}_1:  & 
\setlength{\unitlength}{1ex}
{}
\ar@{-}[r] &
\setlength{\unitlength}{1ex}
\begin{picture}(2, 2)(-1, -1)
\put(0,0){\circle{2}}
\put(0,0){\makebox(0,0){\tiny$a_l$}}
\end{picture}
\ar@{-}[r] & & \\
 \mc{S}_2:  &
&
\setlength{\unitlength}{1ex}
\begin{picture}(2, 2)(-1, -1)
\put(0,0){\circle{2}}
\put(0,0){\makebox(0,0){\tiny$b_m$}}
\end{picture}
& &  }
\]
Then we have
$a_l= b_m+1$.
\item
Suppose that $s_1< l= t_1$ and $s_2= m< t_2$.
\[ 
\xymatrix@R=1ex@M=0ex{  & C_{l-1} & C_l & C_{l+1} \\
\text{The end of }   \mc{S}_1:  & 
\setlength{\unitlength}{1ex}
{}
\ar@{-}[r] &
\setlength{\unitlength}{1ex}
\begin{picture}(2, 2)(-1, -1)
\put(0,0){\circle{2}}
\put(0,0){\makebox(0,0){\tiny$a_l$}}
\end{picture}
 & & \\
\text{The beginning of }  \mc{S}_2:  &
&
\setlength{\unitlength}{1ex}
\begin{picture}(2, 2)(-1, -1)
\put(0,0){\circle{2}}
\put(0,0){\makebox(0,0){\tiny$b_m$}}
\end{picture}\ar@{-}[r]
& &  }
\]
Then we have
$a_l= b_m$.
\end{enumerate}
\end{enumerate}
\end{lem}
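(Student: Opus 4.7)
My plan is to establish (i) first using the rigidity statement from Lemma \ref{lem:cohom_spherical} (i), and then to deduce (ii) and (iii) from (i) by explicit $\Ext$ computations on chains of projective lines, following the template of Lemma \ref{lem:degree_restriction_n=2}.

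For (i), the hypothesis $j - i \le n-1$ is what allows $Z'$ to be viewed as an honest chain (not a cycle) of $(-2)$-curves. I would work in the analytic open neighborhood $U_{i,\ldots,j}$ of $Z'$, which is biholomorphic to a neighborhood of a chain $\mb{I}_{j-i+3}$ (including the two boundary disks $D_{i-1}$ and $D_{j+1}$). By Lemma \ref{lem:cohom_spherical} (i), $\bigoplus_p H^p(\alpha)$ is a rigid $\mc{O}_S$-module, and rigidity is preserved by restriction to the open subset $U_{i,\ldots,j}$. On $U_{i,\ldots,j}$ each summand $\mc{S}_s(a_s,\ldots,a_t)$ may break into several pieces, each of the form $\mc{S}_{C_s\cup\cdots\cup C_t}(a_s,\ldots,a_t)$ truncated against the boundary of $U_{i,\ldots,j}$ (as in \eqref{ali:circle2}). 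Then, repeating the argument used in \cite[\S 4]{IU05} for $A_n$-configurations inside smooth surfaces, the vanishing of self-$\Ext^1$ on $S$ forces the vanishing of self-$\Ext^1$ for the further restriction to $Z'$, which gives the desired rigidity.

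For (ii), I argue by contradiction: if $C_{t_1} = C_{s_2-1}$, then there is a non-split extension
$$0 \to \mc{S}_2 \to \mc{S}_{s_1}(a_{s_1},\ldots,a_{t_1},b_{s_2},\ldots,b_{t_2}) \to \mc{S}_1 \to 0,$$
which yields a nonzero class in $\Ext^1_S(\mc{S}_1,\mc{S}_2)$, contradicting the rigidity of $\mc{S}_1 \oplus \mc{S}_2$. This is exactly the mechanism used at the start of Lemma \ref{lem:degree_restriction_n=2}. For (iii), taking $Z' = C_l$ in part (i) shows that $(\mc{S}_1 \oplus \mc{S}_2)|_{C_l}$ is rigid; by formula \eqref{eqn:restriction_S_i} this restriction is a direct sum of line bundles on $C_l \cong \PP^1$ containing $\mc{O}_{C_l}(a_l)$ and $\mc{O}_{C_l}(b_m)$ as summands. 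The classical dimension count \eqref{ali:ext^1} shows $\Ext^1_S(\mc{O}_{C_l}(a_l),\mc{O}_{C_l}(b_m)) \ne 0$ whenever $|a_l - b_m| \ge 2$, which gives (a). For (b), (c), (d) I would choose a slightly larger sub-chain $Z'$ adapted to which side of $C_l$ each support continues, and then examine the local $\Ext^1$ between the relevant truncations. A non-trivial extension corresponding to gluing $\mc{S}_1$ and $\mc{S}_2$ at $C_l$ exists exactly when the stated degree relation fails; imposing the vanishing dictated by (i) yields $a_l \ge b_m$ in case (b), $a_l = b_m + 1$ in case (c), and $a_l = b_m$ in case (d).

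The main obstacle will be properly executing (i) in the case where $\mc{S}_1$ or $\mc{S}_2$ wraps around the entire cycle $Z_0$, so that its restriction to $U_{i,\ldots,j}$ breaks into multiple summands meeting the boundary of $Z'$. One must keep track of how each piece contributes to the $\Ext^1$ between the restrictions, and verify that the $\mc{O}_S$-rigidity that we have on the whole $S$ really does descend to $\mc{O}_S$-rigidity of the restriction to $Z'$ (not merely some weaker rigidity over the thickened neighborhood). The case-by-case Ext computations in (iii) (b)-(d) then require careful bookkeeping of the endpoint behaviour of the restricted sheaves, but the pattern is already present in the proof of Lemma \ref{lem:degree_restriction_n=2} and should generalize without further conceptual difficulty.
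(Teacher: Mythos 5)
Your treatment of (ii) and (iii) follows the paper's own route: (ii) is proved there by exactly the non-split-extension argument you describe (borrowed from the $C_{s}=C_{t}$ step of Lemma \ref{lem:degree_restriction_n=2}), and (iii)(1) is obtained by taking $Z'=C_l$ in (i) and invoking \eqref{ali:ext^1}, while (2)--(4) use $Z'=C_{l-1}\cup C_l$ and a surjection $\Hom_S(\mc{O}_{C_l}(b_m),\mc{O}_{C_l}(a_l))\to\Ext^1_S(\mc{O}_{C_l}(b_m),\mc{O}_{C_{l-1}}(a_{l-1}-1))$ coming from the restriction sequence of $\mc{O}_{C_{l-1}\cup C_l}(a_{l-1},a_l)$; your sketch of (b)--(d) is vaguer but points at the same computation. (A cosmetic slip: the middle term of your extension in (ii) should carry a degree shift by $+1$ at the gluing component, as in the displayed sequence in the proof of Lemma \ref{lem:degree_restriction_n=2}; otherwise its subsheaf of sections vanishing on the $\mc{S}_1$-part is $\mc{S}_2$ twisted down at the new node, not $\mc{S}_2$ itself.)

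The genuine gap is in (i), and you have in fact named it yourself without closing it. Passing to the analytic neighbourhood $U_{i,\ldots,j}$ does not help here: $\Supp(\mc{S}_1\oplus\mc{S}_2)$ is all of $Z_0$, which is not contained in $U_{i,\ldots,j}$, and restriction to a non-compact open set neither injects nor surjects on $\Ext^1$ in any obvious way (the $H^0(\mc{E}xt^1)$ and $H^1(\mc{H}om)$ pieces of the local-to-global spectral sequence both change). "Repeating the argument of \cite[\S 4]{IU05}" is a citation of the conclusion you want, not a proof of the descent of $\mc{O}_S$-rigidity from $S$ to $Z'$. The paper's actual argument is short and entirely global: consider the restriction map $\mc{S}_1\oplus\mc{S}_2\to(\mc{S}_1\oplus\mc{S}_2)|_{Z'}$ as a surjection of sheaves on $S$; its kernel is a pure one-dimensional sheaf whose support meets $Z'$ in only finitely many points, so there are no nonzero homomorphisms from the kernel to $(\mc{S}_1\oplus\mc{S}_2)|_{Z'}$, and Mukai's lemma (\cite[Lemma 2.2 (2)]{KO95}) then transfers the rigidity of $\mc{S}_1\oplus\mc{S}_2$ (Lemma \ref{lem:cohom_spherical} (i)) to the quotient $(\mc{S}_1\oplus\mc{S}_2)|_{Z'}$. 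This is the missing idea; with it, the hypothesis $j-i\le n-1$ is only needed so that $Z'$ is a proper closed subscheme (the case $Z'=Z_0$ being trivial), not to arrange any chain-versus-cycle dichotomy.
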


\begin{proof}
(i)
We may assume that $Z'\ne Z$.
Let us consider the restriction map
\begin{equation*}
{S}_1\oplus\mc{S}_2\to (\mc{S}_1\oplus 
\mc{S}_2)|_{Z'}.
\end{equation*}
Note that there are no homomorphism from its kernel to $(\mc{S}_1\oplus \mc{S}_2)|_{Z'}$, since their supports intersect at finitely many points.
Then we can apply Mukai's lemma (see \cite[Lemma 2.2 (2)]{KO95}), and hence
the rigidity of $\mc{S}_1\oplus 
\mc{S}_2$ implies the rigidity of  $(\mc{S}_1\oplus 
\mc{S}_2)|_{Z'}$ as an $\mc{O}_S$-module.

(ii)  To the contrary, assume that $C_{t_1}=C_{s_2-1}$.
Then we can actually deduce a contradiction in a completely similar way to 
the one in the proof of $C_s=C_t$ in Lemma \ref{lem:degree_restriction_n=2} (i). 

(iii)
Take $Z'=C_l$ in (i) for (1), and $Z'=C_{l-1}\cup C_l$ in (i) for (2) in the case $n>2$.   
Then (1) follows immediately from the equation \eqref{ali:ext^1} in \S \ref{subsec:euler_form} and the rigidity of 
$$
\mc{O}_{C_l}(a_l)\oplus \mc{O}_{C_m}(b_m)=\mc{O}_{C_l}(a_l)\oplus \mc{O}_{C_l}(b_m),
$$ 
which is a direct summand of the rigid sheaf $(\mc{S}_1\oplus \mc{S}_2)|_{C_l}$.
In the situation of (2) in the case $n>2$, we know the rigidity of 
$$
\mc{O}_{C_{l-1}\cup C_{l}}(a_{l-1},a_{l})\oplus \mc{O}_{C_{m}}(b_m)=\mc{O}_{C_{l-1}\cup C_{l}}(a_{l-1},a_{l})\oplus \mc{O}_{C_{l}}(b_m).
$$ 
Then, there is a surjection 
$$\Hom _S(\mc{O}_{C_l}(b_m),\mc{O}_{C_l}(a_l))\to\Ext^1 _S(\mc{O}_{C_l}(b_m),\mc{O}_{C_{l-1}}(a_{l-1}-1)).$$
The non-vanishing of the latter space forces the result.
The statement of (2) in the case $n=2$ is proved in Lemma \ref{lem:degree_restriction_n=2}.

We leave it to the reader to show (3) since all the ideas have already appeared. 
The statement (4) is a direct consequence of (2).
\end{proof}


\section{The proof of Proposition \ref{proposition:step -1 of A_n}}\label{sec:-1}

Suppose that we are given a spherical object 
$\alpha \in D_{Z_0}(S)$ with $l(\alpha)=1$. Then we get $\alpha\cong\owe _{C_b}(a)[i]$ for some $a,b,i\in\Z$.
(Or, assume that $l(\alpha)<n$. Then $\Supp \alpha\ne Z_0$, and then 
\cite[Proposition 1.6]{IU05} implies Proposition \ref{proposition:step -1 of A_n}.)
If we prove that for a spherical $\alpha$ with $l(\alpha)>1$, there is
an autoequivalence $\Psi\in B_0$ such that 
\begin{equation}\label{eqn:l(alpha)}
l(\Psi (\alpha)) < l(\alpha),
\end{equation} 
then, since $\Psi(\alpha)$ is again spherical, the induction on $l(\alpha)$ yields Proposition \ref{proposition:step -1 of A_n}.
On the other hand, we can show the inequality
\begin{equation*}
l(\Psi(\alpha))\le\sum_q l(\Psi (H ^q(\alpha)))
\end{equation*}
by a similar way to 
\cite[Lemma 4.11]{IU05} for any $\Psi \in \Auteq D_{Z_0}(S)$. Thus to
get \eqref{eqn:l(alpha)}, 
it is enough to show that
\begin{equation}\label{eqn:l_cohom}
\sum_q l( \Psi (H^q(\alpha)))< \sum_q 
l(H ^q(\alpha))(=l(\alpha)).
\end{equation}
%

\subsection{Auxiliary results}

Let us begin the following lemma. 

\begin{lem}\label{lem:restriction3}
Take integers $s,t$ with $s\le t$, a complex analytic open subset
$U=U_{s,\ldots,t}$ and  
$\Psi \in \Span{T_{\owe_{C_i}(a)}\mid s\le i\le t, a\in \Z}$. 
Let $\beta$ be an object of $D_{Z_0}(S)$.
Then we have the following:
\begin{enumerate}
\item
$\Psi(\beta)|_{U}\cong \Psi(\beta|_{U})$, and
\item
$
l(\Psi(\beta))-l(\beta)
=
l(\Psi(\beta)|_{U})-l(\beta|_U).
$
\end{enumerate}
\end{lem}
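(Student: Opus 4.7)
The plan is to reduce both statements to the case of a single twist $T = T_{\mc{O}_{C_j}(a)}$ with $s\le j\le t$. The reduction for (i) is formal from functoriality: if $(T_1\beta)|_U\cong T_1(\beta|_U)$ and $(T_2\gamma)|_U\cong T_2(\gamma|_U)$ hold for all $\beta,\gamma$, then $(T_2T_1\beta)|_U\cong T_2((T_1\beta)|_U)\cong T_2T_1(\beta|_U)$. For (ii) one telescopes:
$$l(T_2T_1\beta)-l(\beta)=[l(T_2T_1\beta)-l(T_1\beta)]+[l(T_1\beta)-l(\beta)],$$
and applies the single-twist case of (ii) at each step, together with (i) to rewrite $(T_1\beta)|_U$ as $T_1(\beta|_U)$.

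For the single-twist case of (i), I would apply the exact restriction functor $\mb L j^{*}$, where $j\colon U\hookrightarrow S$ denotes the open immersion, to the defining triangle
$$
\RHom_{S}(\mc{O}_{C_j}(a),\beta)\otimes_{\C}\mc{O}_{C_j}(a)\to \beta\to T(\beta).
$$
Since $\mc{O}_{C_j}(a)$ is supported on $C_j\subset U$, its restriction to $U$ is itself, and the complex $\mcRHom_S(\mc{O}_{C_j}(a),\beta)$ is a complex of sheaves supported on $C_j$, so taking global sections on $S$ or on any open neighborhood of $C_j$ gives the same answer. Consequently
$$
\RHom_S(\mc{O}_{C_j}(a),\beta)\;\cong\;\RHom_U(\mc{O}_{C_j}(a)|_U,\beta|_U),
$$
and the restricted triangle coincides with the defining triangle of the twist functor along $\mc{O}_{C_j}(a)|_U$ on the complex manifold $U$ (cf.\ Definition-Proposition \ref{def-prop:twist_functor}). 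By uniqueness of cones this yields $T(\beta)|_U\cong T(\beta|_U)$.

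For the single-twist case of (ii), I would exploit the cycle structure \eqref{eqn:label}: for every $i\notin\{s,s+1,\dots,t\}$, the curve $C_i$ is distinct from $C_j$, so the generic point $\eta_i$ of $C_i$ does not lie on $C_j$ and the stalk $\mc{O}_{C_j}(a)_{\eta_i}$ vanishes. Localizing the twist triangle at $\eta_i$ therefore gives $H^p(\beta)_{\eta_i}\cong H^p(T\beta)_{\eta_i}$ for every $p$, whence $l_i(T\beta)=l_i(\beta)$. In particular this equality holds at the boundary indices $i=s-1$ and $i=t+1$, which are the indices with $U\cap C_i\ne\emptyset$ but $i\notin\{s,\dots,t\}$. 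Both sides of the claimed equality therefore collapse to $\sum_{i=s}^{t}[l_i(T\beta)-l_i(\beta)]$, proving (ii).

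The only genuinely delicate point is the locality identification of $\RHom$ in step (i); everything else is bookkeeping on stalks. I expect the main obstacle to be keeping the roles of $D(S)$ and $D_{\Coh}(\mc{O}_U\text{-}\module)$ straight — in particular, verifying that the spherical object $\mc{O}_{C_j}(a)|_U$ really is spherical as an object on the complex manifold $U$ (so that its twist functor is defined in the sense of Definition-Proposition \ref{def-prop:twist_functor}), and that the restricted triangle is in fact the defining triangle of that twist.
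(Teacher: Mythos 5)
Your argument is correct and follows essentially the same route as the paper: reduce to a single twist $T_{\mc{O}_{C_j}(a)}$, prove (i) by restricting the defining triangle \eqref{eqn:twist_triangle} to $U$ and identifying $\RHom_{D(S)}(\mc{O}_{C_j}(a),\beta)$ with $\RHom_{D(U)}(\mc{O}_{C_j}(a),\beta|_U)$ via the support condition $\Supp\mc{O}_{C_j}(a)=C_j\subset U$, and prove (ii) by observing that the twist leaves the stalks at the generic points $\eta_i$ with $C_i\ne C_j$ unchanged, so both sides of the equality reduce to the sum over $i$ with $s\le i\le t$. The only cosmetic difference is that the paper organizes (ii) by summing over the indices with $U\cap C_i=\emptyset$, whereas you treat the boundary indices $s-1$, $t+1$ explicitly; the content is identical.
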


\begin{proof}
It is enough to consider the case $\Psi=T_{\owe_{C}(a)}$ for some $a$ and some $C=C_i$ with $s\le i\le t$.

(i)
Since $\Supp {\mc{O}_{C}(a)}= C\subset U$, we have 
$$
\RHom _{D(S)}(\mc{O}_{C}(a),\beta)\cong \RHom _{D(U)}(\mc{O}_{C}(a),\beta|_U).
$$ 
Hence, there is the isomorphism
of exact triangles in D(U):
\[
\xymatrix{
\RHom _{D(S)}(\mc{O}_{C}(a),\beta)\otimes_{\mb{C}} \mc{O}_{C}(a) \ar[r]\ar[d]^{\cong} 
& \beta|_U \ar[r]\ar@{=}[d] 
& T_{\mc{O}_{C}(a)}(\beta)|_U\ar[d]^{\cong} \\
\RHom _{D(U)}(\mc{O}_{C}(a),\beta|_U)\otimes_{\mb{C}} \mc{O}_{C}(a)\ar[r]
& \beta|_U \ar[r]
& T_{\mc{O}_{C}(a)}(\beta|_U)
}
\]

(ii) From the exact triangle \eqref{eqn:twist_triangle}, 
 it is easy to see the equality
$$
(T_{\mc{O}_{C}(a)}(\beta))|_{S\backslash C}=
\beta|_{S\backslash C}.
$$
Hence we have
$$
\sum _{i\colon U\cap C_i= \emptyset}l_i(T_{\mc{O}_{C}(a)}(\beta))=
\sum _{i\colon U\cap C_i= \emptyset}l_i(\beta).
$$
Since by definition of $l(\beta|_U)$ we have 
$$
l(\beta)=l(\beta|_U)+
\sum _{i\colon U\cap C_i= \emptyset}l_i(\beta),
$$
we obtain 
$$
l(T_{\owe_{C}(a)}(\beta))-l(\beta)
=
l((T_{\mc{O}_{C}(a)}(\beta))|_{U})-l(\beta|_U).
$$
as desired
\end{proof}

By Lemma \ref{lem:restriction3} (i), 
we can use the computation in \cite[Lemma 4.15]{IU05} in our setting.
For example, when $n=3$, taking $U=U_{0,1}$ and $U_{1,2}$ in Lemma \ref{lem:restriction3} (i), we have
$
T_{\mc{O}_{C_1}(-1)}(\mc{S}_{C_1\cup C_2\cup C_3\cup C_1}(0,0,0,0))=\mc{O}_{C_2\cup C_3}.
$

Lemma \ref{lem:restriction3} (ii) is useful to prove \eqref{eqn:l_cohom} as it allows to apply many results from the 
$A_n$-case of  \cite{IU05} to our $\tilde{A}_n$-case. Namely, let $\mc{S}\in\Sigma(\alpha)$ and $U=U_{s,\ldots,t}$ as above.
Then there is a smooth surfaces $\hat{S}$ containing an $A_{t-s+3}$-configuration $\hat{Z}_0$ and an open subset $\hat{U}\cong U$ 
such that $\hat{U}\cap\hat{Z}_0\cong U\cap Z$ together with a sheaf $\hat{\mc{S}}$ on $\hat{S}$ such that 
$\hat{\mc{S}}|_{\hat{U}}\cong \mc{S}|_U$. Now applying 
Lemma \ref{lem:restriction3} (ii) twice, we get the equality
$$
l(\Psi(\mc{S}))-l(\mc{S})=l(\Psi(\hat{\mc{S}}))-l(\hat{\mc{S}}).
$$
The right hand side of this equation is computed in many cases in \cite{IU05}, see in particular \cite[Lemma 4.15]{IU05}. In the following,
we will often refer the reader to statements in \cite{IU05} and claim that the proof in our case is analogous. Note that in many steps of 
the proof in \cite{IU05}, 
we referred to \cite[(6.2)]{IU05} and left the details to the reader. 
Many of the details (or rather their analogues in the $\tilde{A}_n$-case) are stated explicitly 
in Lemmas  \ref{lem:degree_restriction_n=2} and \ref{lem:degree_restriction} of the present paper.

To prove Lemma A below, we need local versions of \cite[Lemmas 6.3, 6.6]{IU05};


\begin{lem}[cf.~Lemma 6.3 in \cite{IU05}]\label{lemma:fundamental}
Let $\alpha \in D_{Z_0}(S)$ be a spherical object and $C=C_s \subset Z_0$ a $(-2)$-curve.
Take a sufficiently small complex analytic 
neighbourhood $U=U_s$ 
of $C$. 
Assume that for every $p$ we have a decomposition
$$
H^p(\alpha)|_U=\bigoplus_j^{r_1^p}\mathcal{R}_{1,j}^p
\oplus\bigoplus_j^{r_2^p}\mathcal{R}_{2,j}^p
\oplus\bigoplus_j^{r_3^p}\mathcal{R}_{3,j}^p
\oplus\bigoplus_j^{r_4^p}\mathcal{R}_{4,j}^p,
$$
where $\mathcal{R}^p_{k,j}$'s are sheaves of the forms:
\[ 
\xymatrix@R=1ex@M=.3ex{
                           & & C &  \\ 
          \mathcal{R}_{1,j}^p:& \ar@{-}[r]&{\zero}\ar@{-}[r]&\\
          \mathcal{R}_{2,j}^p:& &{\zero}\ar@{-}[r] &\\
          \mathcal{R}_{3,j}^p:& &{\mone}\ar@{-}[r] &\\
          \mathcal{R}_{4,j}^p:& &{\mone} &
         }
\]
In this situation, we have the following:
\begin{enumerate}
\item If $\sum_p r_2^p > \sum_p r_3^p$,
then $l(T_{\owe_C(-1)}(\alpha)) < l(\alpha)$.
\item If $\sum_p r_2^p < \sum_p r_3^p$,
then $l(T_{\owe_C(-2)}(\alpha)) < l(\alpha)$.
\end{enumerate}
\end{lem}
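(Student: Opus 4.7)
The plan is to reduce the global computation to a local one on $U$ and then carry out a case-by-case sheaf-level analysis, in the spirit of the proof of Lemma~6.3 in \cite{IU05}, now in the simpler setting where only a single $(-2)$-curve $C = C_s$ lies inside $U$.

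First I would apply Lemma \ref{lem:restriction3}: part~(i) gives the compatibility $T_{\mc{O}_C(a)}(\alpha)|_U \cong T_{\mc{O}_C(a)}(\alpha|_U)$, and part~(ii) reduces the desired inequality $l(T_{\mc{O}_C(a)}(\alpha)) < l(\alpha)$ to $l(T_{\mc{O}_C(a)}(\alpha)|_U) < l(\alpha|_U)$. Thus the entire argument takes place inside $U$, where $\alpha|_U$ has the prescribed decomposition into the four types of summands.

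Next, using the hypercohomology spectral sequence (equivalently, iterated mapping-cone triangles) I would pass from $\alpha|_U$ to the direct sum of its cohomology sheaves, obtaining
$$ l(T_{\mc{O}_C(a)}(\alpha)|_U) \le \sum_p \sum_{k,j} l\bigl(T_{\mc{O}_C(a)}(\mathcal{R}^p_{k,j})\bigr). $$
Then I would compute $T_{\mc{O}_C(-1)}(\mathcal{R}^p_{k,j})$ for each of the four types from the defining triangle \eqref{eqn:twist_triangle}, controlling the relevant $\Hom$-spaces by \eqref{eqn:euler} together with the restriction data of $\mathcal{R}^p_{k,j}$ to $C$. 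The expected outcome, in line with the $A_n$-case of \cite[Lemma~6.3]{IU05}, is that the twist preserves length on types~$1$ and~$4$, decreases it on each type-$2$ summand, and increases it on each type-$3$ summand, so that the sign of the net change on $U$ is governed by $\sum_p r_3^p - \sum_p r_2^p$. When $\sum_p r_2^p > \sum_p r_3^p$ this forces a strict length decrease, proving~(i); assertion~(ii) is entirely analogous with $\mc{O}_C(-2)$ in place of $\mc{O}_C(-1)$, the roles of types~$2$ and~$3$ being swapped in the local computation.

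I expect the main obstacle to be the sheaf-by-sheaf verification: for each type one must check that the evaluation map in the twist triangle is injective or surjective as required, so that the cone is a sheaf (up to a single non-trivial cohomology in a controlled degree) and no extraneous summands appear in the reassembly over the indices $(p,k,j)$. Here the rigidity and torsion-freeness of $\bigoplus_p H^p(\alpha)$ from Lemma \ref{lem:cohom_spherical}, combined with the combinatorial restrictions on $\Sigma(\alpha)$ recorded in Lemma \ref{lem:degree_restriction}, should play the same role that \cite[(6.2)]{IU05} plays in the $A_n$-case, keeping the bookkeeping sharp enough to derive the strict inequality.
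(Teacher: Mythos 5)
Your proposal is correct and follows essentially the same route as the paper: localize via Lemma \ref{lem:restriction3}, pass to cohomology sheaves via the inequality $l(\Psi(\alpha))\le\sum_q l(\Psi(H^q(\alpha)))$, and then compute the effect of the twist on each of the four summand types (the paper simply cites \cite[Lemma 4.15]{IU05} for this last step, whose content matches your predicted outcome: length preserved on types 1 and 4, decreased on type 2, increased on type 3 for $T_{\owe_C(-1)}$, with the roles swapped for $T_{\owe_C(-2)}$). No substantive difference.
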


\begin{proof}
The assumption in (i) and \cite[Lemma 4.15]{IU05}
imply the inequality
$$
\sum_p l( T_{\owe_C(-1)} (H^p(\alpha)|_U))< \sum_p l(H ^p(\alpha)|_U)(=l(\alpha|_U)).
$$ 
Combining this with Lemma \ref{lem:restriction3},
we obtain the inequality \eqref{eqn:l_cohom} for $\Psi=T_{\owe_C(-1)}$.
Then the desired result follows as explained above.

The proof of  (ii) is similar.
\end{proof}


\begin{lem}[cf.~Lemma 6.6 in \cite{IU05}]\label{lemma:r_4=0}
Under the assumptions of Lemma \ref{lemma:fundamental},
assume that  $\sum_p r_2^p = \sum_p r_3^p \ne 0$ holds.
Then $r_4^p=0$ for all $p$.
\end{lem}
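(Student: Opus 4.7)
My plan is to argue by contradiction, following the strategy of \cite[Lemma 6.6]{IU05} and using Lemma \ref{lem:restriction3} to reduce the computation to the local picture on $U=U_s$ near $C=C_s$. Assume for a contradiction that $r_4^{p_0}>0$ for some $p_0$, so that $\mc{O}_C(-1)$ appears as a direct summand of $H^{p_0}(\alpha)|_U$; the hypothesis $\sum_p r_2^p=\sum_p r_3^p\ne 0$ simultaneously provides at least one summand of each of the types $\mc{R}_2,\mc{R}_3$ in $\bigoplus_p H^p(\alpha)|_U$. Together with an $\mc{R}_4=\mc{O}_C(-1)$ summand, this configuration will be shown to violate rigidity.

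First I would promote rigidity from the global direct sum to the restricted one: by Lemma \ref{lem:cohom_spherical}(i) the sheaf $\bigoplus_p H^p(\alpha)$ is rigid as an $\mc{O}_S$-module, and the restriction map to $\bigoplus_p H^p(\alpha)|_U$ has kernel supported away from $U$, so $\mathrm{Hom}$ from the kernel to the image vanishes; Mukai's lemma (applied just as in the proof of Lemma \ref{lem:degree_restriction}(i)) then transports rigidity to $\bigoplus_p H^p(\alpha)|_U$ as an $\mc{O}_S$-module. In particular $\mc{R}_4\oplus\mc{R}_2$, $\mc{R}_4\oplus\mc{R}_3$, and $\mc{R}_4\oplus\mc{R}_2\oplus\mc{R}_3$ must all be rigid.

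Next I would compute the relevant $\mathrm{Ext}^1$-groups between $\mc{R}_4$ and the extending summands. Writing $\mc{R}_2=\mc{S}_s(0,a_{s+1},\dots)$ and $\mc{R}_3=\mc{S}_s(-1,b_{s+1},\dots)$, the restriction short exact sequences $0\to\mc{O}_{C_{s+1}\cup\cdots}(a_{s+1}-1,\dots)\to\mc{R}_k\to\mc{O}_C(a_s)\to 0$ reduce everything to $\mathrm{Ext}^\bullet$ between line bundles on single $(-2)$-curves. Using the Euler form \eqref{eqn:euler}, formula \eqref{ali:ext^1}, Serre duality (the restriction $\omega_S|_{Z_0}$ is trivial because $Z_0$ is a non-multiple fiber of type $\mathrm{I}_n$), and the elementary computation $\dim\mathrm{Ext}^1_S(\mc{O}_C(-1),\mc{O}_{C_{s+1}}(c))=1$, one sees that the connecting maps in $\mathrm{Ext}^\bullet(\mc{R}_4,\mc{R}_k)$ and $\mathrm{Ext}^\bullet(\mc{R}_k,\mc{R}_4)$ are Yoneda-products with the extension classes defining the indecomposable $\mc{R}_k$.

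The combinatorial heart of the argument is then: the rigidity of $\mc{R}_2\oplus\mc{R}_3$ alone, combined with the balance $\sum r_2^p=\sum r_3^p$, already pins down the signs of all these Yoneda products through the constraints of Lemma \ref{lem:degree_restriction}(iii). Inserting an additional $\mc{R}_4$-summand is then incompatible: at least one of $\mathrm{Ext}^1(\mc{R}_4,\mc{R}_3)$ or $\mathrm{Ext}^1(\mc{R}_2,\mc{R}_4)$ must be non-zero, violating the rigidity established in the previous step. The hardest part will be the precise bookkeeping of these connecting maps; however, by Lemma \ref{lem:restriction3}(ii) the entire argument is local on $U$, and the local picture is indistinguishable from the $A_n$-configuration handled in \cite[Lemma 6.6]{IU05}, so the combinatorial check proceeds in parallel.
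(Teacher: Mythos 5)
Your overall strategy cannot succeed, because the sheaf-level obstruction on which your contradiction rests is not actually there. Your final step asserts that at least one of $\Ext^1_S(\mc{R}_4,\mc{R}_3)$ or $\Ext^1_S(\mc{R}_2,\mc{R}_4)$ must be non-zero; in fact all four groups $\Ext^1_S(\mc{R}_4,\mc{R}_k)$ and $\Ext^1_S(\mc{R}_k,\mc{R}_4)$ for $k=2,3$ vanish. For instance, with $\mc{R}_4=\owe_C(-1)$ and $\mc{R}_2=\owe_{C\cup C'}(0,*)$ one has $\chi(\mc{R}_2,\mc{R}_4)=-(C+C')\cdot C=1$; moreover $\Hom_S(\mc{R}_2,\mc{R}_4)=0$ (any such map factors through $\mc{R}_2\otimes\owe_C\cong\owe_C$, and $\Hom(\owe_C,\owe_C(-1))=0$), while $\Ext^2_S(\mc{R}_2,\mc{R}_4)\cong\Hom_S(\owe_C(-1),\mc{R}_2)^\vee\cong\C$ by Grothendieck--Serre duality ($\omega_S$ is trivial on the non-multiple fiber $Z_0$, and the subsheaf of $\mc{R}_2$ of sections supported on $C$ is $\owe_C(-1)$); hence $\dim\Ext^1_S(\mc{R}_2,\mc{R}_4)=0+1-1=0$. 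The same count disposes of the other three groups. Consequently a direct sum containing summands of all four types $\mc{R}_1,\dots,\mc{R}_4$ is perfectly rigid as a sheaf, so no bookkeeping of $\Ext^1$-groups between the cohomology summands --- however the "signs of the Yoneda products" are arranged --- can rule out $r_4^p\ne 0$.

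The missing idea is that the lemma is not a statement about $\bigoplus_p H^p(\alpha)$ as a sheaf but about how the cohomology sheaves are glued into the complex $\alpha$: one must exploit the simplicity of $\alpha$ and the vanishing of its other self-Homs through the connecting maps $\eta^p\colon H^p(\alpha)\to H^{p-1}(\alpha)[2]$ of its Postnikov system. This is exactly what the paper does. It first lifts the local decomposition on $U$ to a global decomposition of each $H^p(\alpha)$ into summands $\widetilde{\mc{R}^p_{k,j}}\in\Sigma(\alpha)$ (your argument stays entirely on $U$, where these maps are not available in the needed form), then builds the induced maps $\bar\eta^p\colon\owe_C(-1)^{\oplus r_2^p}\to\owe_C(-1)^{\oplus r_3^{p-1}}[2]$ and runs the argument of \cite[Lemma~6.6]{IU05}, which rests on \cite[Lemmas~6.4, 6.5]{IU05}: under the balance $\sum_p r_2^p=\sum_p r_3^p$ these maps are forced to be isomorphisms, so an extra $\owe_C(-1)$-summand could not be attached to the rest of the complex and would split off, contradicting the simplicity of $\alpha$. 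Since your proposal never brings $\Hom_{D(S)}(\alpha,\alpha)\cong\C$ or the differentials of the complex into play, it cannot be repaired without changing the approach to this homological one.
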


\begin{proof}
According to the decomposition of $H^p(\alpha)|_U$
in Lemma \ref{lemma:fundamental}, 
we can decompose $H^p(\alpha)$ as 
$$
H^p(\alpha)=\bigoplus_{j'}\widetilde{\mathcal{R}_{1,j'}^p}
\oplus\bigoplus_j^{r_2^p}\widetilde{\mathcal{R}_{2,j}^p}
\oplus\bigoplus_j^{r_3^p}\widetilde{\mathcal{R}_{3,j}^p}
\oplus\bigoplus_j^{r_4^p}\mathcal{R}_{4,j}^p,
$$
where $\widetilde{\mathcal{R}_{1,j'}^p}$ satisfies $s(\widetilde{\mathcal{R}_{1,j'}^p})\ne s$, and
$\widetilde{\mathcal{R}^p_{k,j}}$ ($k=2,3$) is a sheaf in $\Sigma(\alpha)$ such that $\mc{R}_{k,j}^p$ is a direct summand of $\widetilde{\mc{R}_{k,j}^p}|_U$.
Here note that $\widetilde{\mathcal{R}_{2,j}^p}|_U$ and
$\widetilde{\mathcal{R}_{3,j}^p}|_U$ may possibly
contain several direct summands of the form $\mathcal{R}_{1,j}^p$.

Apply the proof of
\cite[Lemma 6.6]{IU05} for this decomposition,
then we obtain the assertion.
Note that in the proof we use  
\cite[Lemmas 6.4, 6.5]{IU05}.
However,  our setting requires some slight changes. 
For example, we should replace the notations $X,Z$ with $S,Z_0$ respectively, and 
the vertical arrows of the diagram \cite[(6.3)]{IU05} are not isomorphism any more, but injective. 
Hence, we define $\bar{\eta}^p$ to be the following composite:
\begin{equation*}
\xymatrix{
{\owe_C(-1)^{\oplus r_2^p}} \ar[rrr]^{\bar\eta^p} \ar@{^{(}->}[d]
&&&
{\owe_C(-1)^{\oplus r_3^{p-1}}[2]} 
\\
{\mc{H}om_X(\owe_C, \bigoplus_j^{r_2^p}\widetilde{\mathcal{R}_{2,j}^p})} \ar@{^{(}->}[r]
&
{\bigoplus_j^{r_2^p}\widetilde{\mathcal{R}_{2,j}^p}} \ar[r]^{\eta^p}
&
\bigoplus_j^{r_3^{p-1}}\widetilde{\mathcal{R}_{3,j}^{p-1}}[2] \ar@{->>}[r]
&
(\bigoplus_j^{r_3^{p-1}}\widetilde{\mathcal{R}_{3,j}^{p-1}})|_C[2]\ar@{->>}[u]
}
\end{equation*}
Here the right vertical arrow is the projection to the direct summand.
\end{proof}


\begin{lemmaA}[cf.~Lemma A in \cite{IU05}]\label{lemmaA}
Let $\alpha \in D_{Z_0}(S)$, $U=U_s$, and $C=C_s$ be as above.
Assume that we can write
$$
\bigoplus_p\mathcal{H}^p(\alpha)|_{U}=
\bigoplus_j^{r_1}\mathcal{R}_{1,j}
\oplus\bigoplus_j^{r_2}\mathcal{R}_{2,j}
\oplus\bigoplus_j^{r_3}\mathcal{R}_{3,j}
\oplus\bigoplus_j^{r_4}\mathcal{R}_{4,j},
$$
such that $\mathcal{R}_{k,j}\in\Sigma(\alpha)_U$, and they  are of the the following form:

\[ 
\xymatrix@R=1ex@M=.3ex{     &C_{s-1} & C_s & C_{s+1} \\
          \mathcal{R}_{1,j}:& \ar@{-}[r]&{\no}\ar@{-}[r]&\\
          \mathcal{R}_{2,j}:& &{\no}\ar@{-}[r] &\\
          \mathcal{R}_{3,j}:& &{\no} &  \\ 
          \mathcal{R}_{4,j}:&  \ar@{-}[r]&{\no} &    
          }
\]
Suppose that either $r_3\ne 0$ or $r_2\cdot r_4\ne 0$ holds, 
and suppose furthermore that $\Supp \alpha\ne C$.
Then, there is an integer $a$ such that $l(T_{\mc{O}_{C}(a)} (\alpha))<l(\alpha)$.
\end{lemmaA}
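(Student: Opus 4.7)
The approach is to combine the extension-type classification of Lemma A with the degree-labeled classification of Lemma \ref{lemma:fundamental}, and then invoke the latter. First I would apply Lemma \ref{lem:degree_restriction}(iii)(1) to every pair of summands in $\Sigma(\alpha)_U$ whose support meets $C$: their $C$-degrees differ pairwise by at most one, so there exists $a_0 \in \Z$ such that every degree $\deg_C \mc{R}_{k,j}$ lies in $\{a_0, a_0 - 1\}$. I would then refine the decomposition given in the hypothesis by sorting each $\mc{R}_{k,j}$ both by its cohomological position $p$ and by its $C$-degree. This presents $\bigoplus_p H^p(\alpha)|_U$ in a form amenable to Lemma \ref{lemma:fundamental}, possibly up to the left--right symmetry of the configuration around $C$, which corresponds to trying both twists $T_{\mc{O}_C(a_0 - 1)}$ and $T_{\mc{O}_C(a_0 - 2)}$.

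With the refinement in place, I would invoke Lemmas \ref{lemma:fundamental} and \ref{lemma:r_4=0}. When the refined counts yield $\sum_p r_2^p \ne \sum_p r_3^p$ (for one of the two degree groupings), Lemma \ref{lemma:fundamental} gives $l(T_{\mc{O}_C(a)}(\alpha|_U)) < l(\alpha|_U)$ for a suitable $a \in \{a_0 - 1, a_0 - 2\}$, and Lemma \ref{lem:restriction3}(ii) promotes this local inequality to the global one $l(T_{\mc{O}_C(a)}(\alpha)) < l(\alpha)$. In the balanced case $\sum_p r_2^p = \sum_p r_3^p$, Lemma \ref{lemma:r_4=0} forces the refined type-4 multiplicity to vanish, ruling out the subcase $r_2 \cdot r_4 \ne 0$; hence we must have $r_3 \ne 0$ together with $\sum_p r_2^p = \sum_p r_3^p > 0$. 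At this point I would imitate the corresponding final step in the proof of Lemma A in \cite{IU05}: the hypothesis $\Supp \alpha \ne C$ prevents $\alpha$ from being a pure sheaf on $C$, and combining this with the rigidity of $\bigoplus_p H^p(\alpha)$ from Lemma \ref{lem:cohom_spherical}(i) and the sharper constraints of Lemma \ref{lem:degree_restriction}(iii)(2)--(4), one still produces an integer $a$ for which $T_{\mc{O}_C(a)}$ strictly decreases $l(\alpha)$.

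\textbf{Main obstacle.} The delicate step is the balanced subcase, where Lemma \ref{lemma:fundamental} does not apply directly. One must chase morphisms between the refined type-2 and type-3 summands sitting at the two different $C$-degrees $a_0$ and $a_0 - 1$, along the lines of the proof of Lemma \ref{lem:degree_restriction} (Mukai's lemma and Grothendieck--Serre duality), to guarantee that the chosen twist produces a strict decrease. The new difficulty relative to the chain case $\mb{I}_n$ of \cite{IU05} is that the cyclic topology of $\tilde{\mb{I}}_n$ forces all of this analysis to be carried out locally around $C$, which is precisely what Lemma \ref{lem:restriction3} enables.
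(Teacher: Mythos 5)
Your proposal assembles the right tools (Lemmas \ref{lem:degree_restriction}, \ref{lem:restriction3}, \ref{lemma:fundamental}, \ref{lemma:r_4=0}) and correctly identifies that the local computation on $U$ is promoted to the global inequality by Lemma \ref{lem:restriction3} (ii), but the reduction of the whole statement to Lemma \ref{lemma:fundamental} does not go through, essentially because you conflate two different labellings. In Lemma \ref{lemma:fundamental}, $\mathcal{R}^p_{4,j}$ denotes a summand supported on $C$ alone of degree $-1$, i.e.\ a sub-type of the present $\mathcal{R}_{3,j}$; the present $\mathcal{R}_{4,j}$ (supported on $C_{s-1}\cup C_s$, ending at $C$) has no slot in the hypothesis of Lemma \ref{lemma:fundamental} at all. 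Hence in the case $r_2\cdot r_4\ne 0$ the decomposition is never of the required shape, and no left--right reflection helps, because right-starting and left-ending summands occur simultaneously; moreover the dichotomy $T_{\mc{O}_C(a_0-1)}$ versus $T_{\mc{O}_C(a_0-2)}$ in Lemma \ref{lemma:fundamental} is governed by the sign of $\sum_p r_2^p-\sum_p r_3^p$, not by an orientation reversal. For the same reason, invoking Lemma \ref{lemma:r_4=0} to ``rule out $r_2\cdot r_4\ne 0$'' is not legitimate: that lemma presupposes the hypotheses of Lemma \ref{lemma:fundamental}, and its conclusion concerns point-supported summands, not left-ending ones. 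The paper treats $r_2\cdot r_4\ne 0$ by a different and more direct route: Lemma \ref{lem:degree_restriction} (iii) pins down $\deg_C\mathcal{R}_{2,h}=\deg_C\mathcal{R}_{4,k}=a$ for all $h,k$ and $\deg_C\mathcal{R}_{3,j}\in\{a,a-1\}$, and then the explicit length computation of \cite[Lemma 4.15]{IU05}, transported by Lemma \ref{lem:restriction3} (ii), shows that $\Psi=T_{\mc{O}_C(a-1)}$ already works.

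The second genuine gap is the case $r_2=r_4=0$, $r_3\ne 0$, which your case analysis never reaches: there all the counts $\sum_p r^p_2$ and $\sum_p r^p_3$ of Lemma \ref{lemma:fundamental} vanish, so you are in the balanced case with common value $0$, and Lemma \ref{lemma:r_4=0} is inapplicable as well. No twist is produced here; instead one must show the case cannot occur, which is where $\Supp\alpha\ne C$ enters: one computes $\chi(\widetilde{\mathcal{R}_{1,j}},\mathcal{R}_{3,k})=0$ and uses the rigidity of $\bigoplus_p H^p(\alpha)$ via \cite[Lemma 6.2]{IU05} to conclude $r_1=0$, whence $\Supp\alpha= C$, a contradiction. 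Your remark that ``$\Supp \alpha \ne C$ prevents $\alpha$ from being a pure sheaf on $C$'' points in this direction but does not supply the Euler-form argument that makes it work. Only in the remaining case $r_2\cdot r_3\ne 0$, $r_4=0$ does your plan essentially coincide with the paper's: depending on whether some $\mathcal{R}_{3,h}$ attains the maximal $C$-degree, one either applies \cite[Lemma 4.15]{IU05} directly or lands precisely in Lemmas \ref{lemma:fundamental} and \ref{lemma:r_4=0}.
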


\begin{proof}
The proof goes parallel to that of \cite[Lemma A]{IU05}.
Let us denote by $\widetilde{\mc{R}_{1,j}}$ an element in 
$\Sigma(\alpha)$ such that $\mc{R}_{1,j}$ is a direct summand of $\widetilde{\mc{R}_{1,j}}|_U$.

When $r_2=r_4=0$, we can see that
$$
\chi(\widetilde{\mc{R}_{1,j}},\mc{R}_{3,k})=0
$$
for any $j,k$. Note that \cite[Lemma 6.2]{IU05} is true without any changes in our situation
so that we can conclude $r_1=0$. This contradicts the asumption that $\Supp \alpha\ne C$. 
 Therefore, by the symmetry, 
we may safely assume that $r_2\ne 0$.

In the case $r_2\cdot r_4 \ne 0$, we can see from 
Lemma \ref{lem:degree_restriction} that there is an integer $a$
such that
$$
\deg_C\mc{R}_{2,h}=\deg_C\mc{R}_{4,k}=a
$$
for all $h,k$,
and that $\deg_C \mc{R}_{3,j}$ is $a$ or $a-1$.
Then by Lemma \ref{lem:restriction3} (ii) and \cite[Lemma 4.15]{IU05},
the inequality \eqref{eqn:l_cohom}
holds for $\Psi=T_{\mc{O}_C(a-1)}$. 
Hence, we obtain the desired result. 

What remains is the case
$r_2\cdot r_3\ne 0$ and $r_4=0$.
First, suppose  that 
$$
\deg _{C_s}\mc{R}_{3,h}=\max_{k,j}\deg_{C_s}\mc{R}_{k,j}(=:b)
$$ 
holds for some $h$.
Then 
$\deg _{C_s}\mc{R}_{1,j}=\deg _{C_s}\mc{R}_{2,j}=b$ for all $j$, 
and if we put  $\Psi= T_{\mc{O}_C(b-1)}$, 
again using Lemma \ref{lem:restriction3} (ii) along with \cite[Lemma 4.15]{IU05}, 
we obtain the inequality \eqref{eqn:l_cohom}.

Next, suppose that
$\deg _{C_s}\mc{R}_{3,j}=b-1$ holds for all $j$.
In this case, just apply Lemmas \ref{lemma:fundamental} and \ref{lemma:r_4=0}, which imply the assertion.
\end{proof}


\begin{lemmaB}[cf.~Lemma B in \cite{IU05}]\label{lemmaB}
Let $\alpha\in D_{Z_0}(S)$ be a spherical object and fix positive integers $s,t$ with $1\le t-s\le n-2$. 
Take a sufficiently small complex analytic neighbourhood $U=U_{s,\ldots,t}$ 
of $C_s\cup\cdots\cup C_t$ and 
assume that we can write
$$
\bigoplus_p H^p(\alpha)|_{U}
=\bigoplus_j^{r_1}\mathcal{R}_{1,j}\oplus\bigoplus_j^{r_2}\mathcal{R}_{2,j}
\oplus\bigoplus_j^{r_3}\mathcal{R}_{3,j}
\oplus\bigoplus_j^{r_4}\mathcal{R}_{4,j},
$$
where $\mathcal{R}_{k,j}\in\Sigma(\alpha)_U$, and they are of the forms 
\[ 
\xymatrix@R=1ex@M=.3ex{     &    C_{s-1}& C_s & C_{s+1} & & C_{t-1} & C_t & C_{t+1}\\
          \mathcal{R}_{1,j}:&\ar@{-}[r]&{\no}\ar@{-}[r]&{\no}\ar@{-}[r]&\cdots \ar@{-}[r]&{\no}\ar@{-}[r]&{\no}\ar@{-}[r]& \\
          \mathcal{R}_{2,j}:&          &{\no}\ar@{-}[r]&{\no}\ar@{-}[r]&\cdots \ar@{-}[r]&{\no}\ar@{-}[r]&{\no}\ar@{-}[r]& \\
          \mathcal{R}_{3,j}:&          &{\no}\ar@{-}[r]&{\no}\ar@{-}[r]&\cdots \ar@{-}[r]&{\no}\ar@{-}[r]&{\no}& \\
          \mathcal{R}_{4,j}:&\ar@{-}[r]&{\no}\ar@{-}[r]&{\no}\ar@{-}[r]&\cdots \ar@{-}[r]&{\no}\ar@{-}[r]&{\no}& }.                   
\]
Suppose that either $r_3\ne 0$ or $r_2\cdot r_4\ne 0$ holds.
Then there is  
$$
\Phi\in  \Span{T_{\owe_{C_l}(a)}\bigm|  a\in \Z, s\le l\le t }
$$
such that $ l(\Phi(\alpha))<l(\alpha)$.
\end{lemmaB}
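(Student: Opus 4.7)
The plan is to import \cite[Lemma B]{IU05} from the $A_n$-setting via Lemma \ref{lem:restriction3}. Since $1\le t-s\le n-2$, the curves $C_{s-1},C_s,\ldots,C_t,C_{t+1}$ are $t-s+3$ \emph{distinct} $(-2)$-curves, and a sufficiently small complex-analytic neighbourhood $U=U_{s,\ldots,t}$ of $C_s\cup\cdots\cup C_t$ is biholomorphic to a neighbourhood $\hat U$ of the chain $\hat C_s\cup\cdots\cup\hat C_t$ inside an $A_{t-s+3}$-configuration $\hat Z_0=\hat C_{s-1}\cup\hat C_s\cup\cdots\cup\hat C_t\cup\hat C_{t+1}$ sitting in, say, the minimal resolution $\hat S$ of the $A_{t-s+3}$-singularity. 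I would fix such an isomorphism $\phi\colon\hat U\xrightarrow{\sim}U$ identifying $\hat C_l\cap\hat U$ with $C_l\cap U$ for $s-1\le l\le t+1$. Under $\phi$, the decomposition of $\bigoplus_p H^p(\alpha)|_U$ given in the hypothesis transports term-by-term to an analogous decomposition on $\hat U$, and because each summand has compact support in $\hat U$ it extends by zero to a coherent sheaf on $\hat S$.

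Next I would mimic the inductive case analysis of \cite[Lemma B]{IU05} on $t-s$ applied to this transported data on $\hat S$, with base case provided by Lemma \ref{lemmaA} in its $\tilde A_n$-form. The ingredients required in that case analysis are: (a) the local length computation \cite[Lemma 4.15]{IU05}, now accessed through Lemma \ref{lem:restriction3}(ii); (b) degree constraints forced by rigidity, now furnished by Lemma \ref{lem:degree_restriction}, which applies because each summand $\mathcal{R}_{k,j}$ is an indecomposable direct summand of $\bigoplus_p H^p(\alpha)|_U$ whose rigidity is inherited from the global rigidity in Lemma \ref{lem:cohom_spherical}(i) via Mukai's lemma; and (c) the basic properties of twist functors from \S\ref{subsec:twist_functor}, together with the auxiliary Lemmas \ref{lemma:fundamental} and \ref{lemma:r_4=0}. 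The output is an element
$$
\hat\Phi\in\Span{T_{\mathcal{O}_{\hat C_l}(a)}\bigm| a\in\Z,\ s\le l\le t}
$$
for which the total length of the cohomology sheaves on $\hat U$ strictly decreases.

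Finally, transporting $\hat\Phi$ across $\phi$ produces
$$
\Phi\in\Span{T_{\mathcal{O}_{C_l}(a)}\bigm| a\in\Z,\ s\le l\le t},
$$
and Lemma \ref{lem:restriction3}(ii), applied to each $H^p(\alpha)$ individually, converts the local strict inequality into $\sum_p l(\Phi(H^p(\alpha)))<\sum_p l(H^p(\alpha))$. The subadditivity $l(\Phi(\alpha))\le\sum_p l(\Phi(H^p(\alpha)))$ recalled at the beginning of \S\ref{sec:-1} then yields $l(\Phi(\alpha))<l(\alpha)$, as required.

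The hard part will be faithfully reproducing the lengthy case analysis of \cite[Lemma B]{IU05}: one has to keep simultaneous track of the cohomological grading $p$, the four kinds of summands $\mathcal{R}_{1,j},\ldots,\mathcal{R}_{4,j}$, and the compatibility of successive applications of $T_{\mathcal{O}_{C_l}(a)}$ at boundary versus interior curves. The main technical risk is that a twist at an interior curve $C_l$ with $s<l<t$ could spoil the local structure near $C_s$ or $C_t$ required by the inductive hypothesis; controlling this is precisely what the degree constraints of Lemma \ref{lem:degree_restriction} are designed to do (replacing the appeals to \cite[(6.2)]{IU05} in \cite[Section 6]{IU05}), so the analysis can be run in essentially the same shape.
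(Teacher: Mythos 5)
Your proposal matches the paper's own argument: the paper likewise proves Lemma B by transporting the local data on $U_{s,\ldots,t}$ to an $A_{t-s+3}$-configuration via Lemma \ref{lem:restriction3} and then running the case analysis of \cite[Lemma B]{IU05}, with the degree constraints of Lemma \ref{lem:degree_restriction} replacing the appeals to \cite[(6.2)]{IU05}. The only cosmetic difference is that the paper explicitly flags the local analogues of \cite[Lemmas 6.7, 6.8]{IU05} as the auxiliary inputs, whereas you cite Lemmas \ref{lemma:fundamental} and \ref{lemma:r_4=0}; the overall strategy is the same.
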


\begin{proof}
The proof goes parallel to that of \cite[Lemma B]{IU05}.
Note that the straight-forward analogues of
\cite[Lemmas 6.7, 6.8]{IU05} hold for
$H^p(\alpha)|_U$, and use them.
\end{proof}


\subsection{The proof of Proposition \ref{proposition:step -1 of A_n}}\label{subsec:step -1 of A_n}

Notice that if we show the existence of an autoequivalence $\Phi\in B_0$
such that $l(\alpha)>l(\Phi(\alpha))$, under the assumption that $l(\alpha) >1$, 
then we can prove the statement by induction on $l(\alpha)$.
Recall that the proof is already done in \cite[Proposition 1.7]{IU05} 
in the case $\Supp\alpha\ne Z_0$, since in this case $\alpha$ is supported by a 
chain of projective lines, contained in $Z_0$.
Hence, we may assume $\Supp\alpha=Z_0=C_1\cup\cdots\cup C_n$.
 
For $\mc{S}=\mc{S}_{s}(a_{s},\ldots , a_{t})\in \Sigma(\alpha)$, define an integer 
$s(\mathcal{S})$ 
by the properties 
$$
C_{s(\mathcal{S})}=C_s \mbox{ and } 1\le  s(\mathcal{S})\le n,
$$
and an integer $t(\mathcal{S})$ 
by the properties 
$$
C_{t(\mathcal{S})}=C_t \mbox{ and } 
s(\mc{S})\le  t(\mathcal{S})\le s(\mc{S})+n-2.
$$
Here, note that Lemma 
\ref{lem:degree_restriction} (ii) guarantees that,
 for $\mc{R} \in \Sigma(\alpha)$, there are no elements $\mc{S}\in\Sigma (\alpha)$ such that $C_{t(\mc{S})}=C_{s(\mc{R})-1}$ 
or $C_{t(\mc{R})}=C_{s(\mc{S})-1}$.  Thus, we have
$$
l_{s(\mc{R})-1}(\alpha) < l_{s(\mc{R})}(\alpha)
\text{ and } l_{t(\mc{R})}(\alpha) > l_{t(\mc{R})+1}(\alpha).
$$
Therefore, we can find integers $s_0$ and $t_0$ such that 
$$l_{s_0-1}(\alpha) < l_{s_0}(\alpha)=l_{s_0+1}(\alpha)=\dots=l_{t_0}(\alpha)>l_{t_0+1}(\alpha).$$
Then we are in the situation of Lemma A (if $s_0=t_0$) or Lemma B (if $s_0<t_0$). So the proof is done.


\begin{rem}\label{remark:R_0}
Take an arbitrary element $\mc{R} \in \Sigma(\alpha)$.
Then, in the proof above, we can 
find $s_0, t_0$ such that $s(\mc{R}) \le s_0 \le t_0 \le t(\mc{R})$.
Thus, Lemma A or B provides
$$
\Phi\in\Span{T_{\mc{O}_{C_l}(a)} \bigm|  a\in\Z, s(\mc{R}) \le l \le t(\mc{R}) }
$$
such that $l(\alpha)>l(\Phi(\alpha))$.
We shall use this remark in \S \ref{sec:-2}.
\end{rem}


\section{The proof of Proposition \ref{proposition:step -2 of A_n}}\label{sec:-2}

\subsection{Facts needed for the proof of Proposition \ref{proposition:step -2 of A_n}}
Let $\Phi$ be an autoequivalence of $D_{Z_0}(S)$ that preserves the cohomology class of a point.
Put $\alpha=\Phi(\owe_{C_1})$ and $\beta=\Phi(\owe_{C_1}(-1))$.
Applying Proposition \ref{proposition:step -1 of A_n} and  the shift functor $[1]$, 
we may assume that 
$$
\alpha=\mc O_{C_b}(a)
$$ 
for some $a,b\in \Z$ with $1\le b\le n$.
To prove 
Proposition \ref{proposition:step -2 of A_n}, it suffices to show the following;

\begin{cla}\label{cla:l(beta),l(alpha)}
Suppose that $l(\beta)>1$. There is an autoequivalence 
$\Psi\in B_0$ 
such that $l(\Psi(\alpha))=1$ and $l(\beta)>l(\Psi(\beta))$.
\end{cla}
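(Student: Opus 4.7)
The approach is to apply the reduction procedure underlying Proposition \ref{proposition:step -1 of A_n} to the spherical object $\beta$, choosing the twist functors so as to preserve the length-one form of $\alpha$. The key flexibility is provided by Remark \ref{remark:R_0}, which permits us to restrict attention to twists $T_{\mc{O}_{C_l}(c)}$ with $l$ in the support interval $[s(\mc{R}), t(\mc{R})]$ of any chosen $\mc{R} \in \Sigma(\beta)$.

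First I would extract numerical constraints on $\beta$. Applying $\Phi$ to the exact sequence $0 \to \mc{O}_{C_1}(-1) \to \mc{O}_{C_1} \to \mc{O}_x \to 0$ at any $x \in C_1$ gives a triangle $\beta \to \alpha \to \Phi(\mc{O}_x)$. Since $\Phi$ preserves $\ch(\mc{O}_x) \in H^4(S, \Q)$ by hypothesis, in $K(S)$ we get $[\beta] = [\mc{O}_{C_b}(a-1)]$, hence $c_1(\beta) = [C_b]$. Combined with Lemma \ref{lem:cohom_spherical} this forces the multiplicity balance $\sum_p (-1)^p r_i^p = \delta_{i,b}$, where $r_i^p$ denotes the rank of $H^p(\beta)|_{C_i}$. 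A short Euler-characteristic computation (using $\omega_S|_{C_b} \cong \mc{O}_{C_b}$, so that every $\mc{O}_{C_b}(c)$ is itself spherical) further excludes the degenerate scenario in which $\Sigma(\beta)$ consists only of sheaves of the form $\mc{O}_{C_b}(c)$ when $l(\beta) > 1$.

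Next I would locate $\mc{R} \in \Sigma(\beta)$ whose support interval $\{C_{s(\mc{R})}, \ldots, C_{t(\mc{R})}\}$ avoids $\{C_{b-1}, C_b, C_{b+1}\}$. If such an $\mc{R}$ exists, Remark \ref{remark:R_0} provides
\[\Psi \in \Span{T_{\mc{O}_{C_l}(c)} \mid c \in \Z,\ s(\mc{R}) \le l \le t(\mc{R})} \subset B_0\]
with $l(\Psi(\beta)) < l(\beta)$; since each such $C_l$ is disjoint from $C_b$, disjoint-support vanishing yields $\RHom(\mc{O}_{C_l}(c), \alpha) = 0$, so by the triangle \eqref{eqn:twist_triangle} together with Remark \ref{rem:twist} each generating twist acts trivially on $\alpha$. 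Hence $\Psi(\alpha) \cong \alpha$ and $l(\Psi(\alpha)) = 1$. Otherwise, every $\mc{R} \in \Sigma(\beta)$ with $l(\mc{R}) > 1$ has support meeting $\{C_{b-1}, C_b, C_{b+1}\}$; a finer analysis via Lemma \ref{lem:degree_restriction} then constrains $\Sigma(\beta)$ tightly around $C_b$, and one uses the triangle $\beta \to \alpha \to \Phi(\mc{O}_x)$ directly to construct $\Psi$, typically after an auxiliary twist in $B_0$ that moves $\alpha$ to a more convenient component, reducing to the previous case.

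The main obstacle is the second alternative: handling configurations in which $\Sigma(\beta)$ concentrates around $C_b$ requires carefully tracking the effect of twists supported on $C_b$ itself or its neighbours on $\alpha$, and verifying that the length-one form of $\alpha$ survives (the admissible twists in this regime being those $T_{\mc{O}_{C_b}(c)}$ with $|c - a| \le 1$). The argument parallels the corresponding step in \cite[\S 6]{IU05}, but the cyclic structure of $Z_0 \cong \tilde{\mb{I}}_n$ produces additional boundary subcases absent in the $A_n$-case.
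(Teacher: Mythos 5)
Your opening moves match the paper's: the identity $c_1(\beta)=[C_b]$ is exactly Fact \ref{condition:(4)} (the paper gets it even more directly from $c_1(\Phi(\mc{O}_x))=0$), and your first alternative — when some $\mc{R}\in\Sigma(\beta)$ has support interval disjoint from $C_{b-1}\cup C_b\cup C_{b+1}$, invoke Remark \ref{remark:R_0} and disjoint-support triviality of the twists on $\alpha$ — is precisely the paper's Remark \ref{rem:nonempty}. But that reduction is only the preamble. The substance of the paper's proof is what happens afterwards: the spectral sequence $E_2^{p,q}=\Ext^p_S(H^{-q}(\beta),\mc{O}_{C_b}(a))\Rightarrow\Hom^{p+q}(\beta,\alpha)=\C^2$ and its consequences (Facts \ref{condition:(2)}, \ref{condition:(3)}, the numerical constraint \eqref{equation:addCondition(3)}, the bound $a\ge-1$ of Claim \ref{claim:a>=-1}), the normalization of Fact \ref{condition:(1)}, the three-way Division into Cases according to how the summands in $\Sigma(\beta)_{U_b}$ meet $C_b$, and above all Case (iii), which requires a separate induction on $l(\mc{R})$ for the distinguished summand ending at $C_{b-1}$, the exclusion of $n=2$, and carefully chosen composites such as $T_{\mc{O}_{C_b}(-1)}\circ T_{\mc{O}_{C_{b-1}}(-2)}$ that move $\alpha$ to $\mc{O}_{C_{b-1}}(-2)$ while not increasing $l(\beta)$. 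Your proposal defers all of this to "a finer analysis via Lemma \ref{lem:degree_restriction}" and "one uses the triangle \dots to construct $\Psi$". That is the entire content of the claim, so what you have is a correct outline of the paper's strategy rather than a proof; the gap is the whole case analysis of \S\ref{sec:-2}.

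One concrete error in the part you do assert: the twists on $C_b$ preserving $l(\Psi(\alpha))=1$ are \emph{not} those with $|c-a|\le 1$. For $c=a+1$ one has $\RHom(\mc{O}_{C_b}(a+1),\mc{O}_{C_b}(a))\cong\C^2[-2]$, so the triangle \eqref{eqn:twist_triangle} gives $H^0\cong\mc{O}_{C_b}(a)$ and $H^1\cong\mc{O}_{C_b}(a+1)^{\oplus 2}$ for $T_{\mc{O}_{C_b}(a+1)}(\mc{O}_{C_b}(a))$, i.e.\ length $3$; only $c=a$ (which gives $\alpha[-1]$) and $c=a-1$ (which gives $\mc{O}_{C_b}(a-2)[1]$) keep length one. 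Moreover the paper's Case (iii) essentially needs twists along the \emph{adjacent} curves $C_{b\pm1}$ as well, which your admissibility discussion does not cover. So even the heuristic you offer for navigating the hard regime would steer the argument wrong.
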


In fact, Proposition \ref{proposition:step -2 of A_n} easily follows from this:\\

\noindent
{\it Proof of Proposition \ref{proposition:step -2 of A_n}.}
By Claim \ref{cla:l(beta),l(alpha)}, we can reduce the problem to the case $l(\alpha)=l(\beta)=1$.
In this case, the supports of $\alpha$ and $\beta$ must be the same by Condition \ref{condition:(4)} below.
Therefore, we get the conclusion from the $A_1$-case \cite[Proposition 1.6]{IU05},
and we can complete the proof of 
Proposition \ref{proposition:step -2 of A_n} by induction on $l(\beta)$.
\qed\\

The most of the proof of Claim \ref{cla:l(beta),l(alpha)} goes parallel to that of \cite[Claim 7.1]{IU05}.

\begin{fact}[cf.~Condition 7.2 in \cite{IU05}]\label{condition:(1)}
We may assume 
$$\max \{\deg_{C_b}\mc{R}\,|\,\mc{R}\in\Sigma(\beta)_{U_b}, \Supp \mc{R} \supset C_b \}=0.$$
Especially, $\deg_{C_b}\mc{R}=0$ or $-1$ 
for all $\mc{R}\in\Sigma(\beta)_{U_b}$ with $\Supp \mc{R} \supset C_b$ by Lemma \ref{lem:degree_restriction} (iii-1).
\end{fact}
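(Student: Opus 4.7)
The plan is to argue that, after fixing $\alpha \cong \mc{O}_{C_b}(a)$ via Proposition \ref{proposition:step -1 of A_n}, we retain enough freedom within $B_0$ to further normalize the maximum degree at $C_b$ of the summands of $\beta|_{U_b}$. The strategy is to post-compose the element $\Psi_0 \in B_0$ coming from Proposition \ref{proposition:step -1 of A_n} with a suitable autoequivalence $\Xi \in B_0$ whose effect on $\alpha$ keeps it a line bundle on $C_b$ while simultaneously shifting every $\deg_{C_b}\mc{R}$ by the required amount.

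The key ingredient is the observation, extracted from \cite[Proposition 4.18]{IU05}, that the tensor-by-line-bundle functor $\otimes \mc{O}_S(G)$ lies in $B_0$ for every $(-2)$-curve $G \subset Z_0$: indeed, $\otimes \mc{O}_S(G)$ is contained in $B$ by that proposition, and a local argument expresses it as a composition of spherical twists along line bundles on $G \subset Z_0$, which places it in the subgroup $B_0$. This autoequivalence preserves $D_{Z_0}(S)$ and the cohomology class of any closed point, sends $\mc{O}_{C_b}(c)$ to $\mc{O}_{C_b}(c + G\cdot C_b)$, which is again a line bundle on $C_b$, and shifts $\deg_{C_b}\mc{R}$ by $G\cdot C_b$ for every $\mc{R} \in \Sigma(\beta)_{U_b}$ with $\Supp\mc{R} \supset C_b$. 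Applying this with $G = C_{b+1}$, which satisfies $G\cdot C_b = 1$ whenever $n \geq 3$, and writing $m$ for the current value of the maximum, the replacement of $\Psi_0$ by $(\otimes \mc{O}_S(C_{b+1}))^{-m}\circ\Psi_0$ lies in $B_0$ and reduces the maximum to $0$; the integer $a$ is shifted to $a - m$, which is harmless as the conclusion of Proposition \ref{proposition:step -2 of A_n} places no constraint on $a$.

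The main obstacle I anticipate is the case $n = 2$, where $C_{b+1}\cdot C_b = 2$ and therefore only even shifts are directly available by the construction above. In this case, one must either verify, by a parity argument exploiting the rigidity constraints of Lemma \ref{lem:degree_restriction_n=2} between $\alpha$ and the summands of $\beta$, that the maximum $m$ is automatically even from the outset; or else invoke an additional element of $B_0$, such as a suitable composition involving a spherical twist $T_{\mc{O}_{C_b}(k)}$, to supply the missing odd correction while preserving the form of $\alpha$. Once this reduction is complete, the "Especially" clause follows at once by applying Lemma \ref{lem:degree_restriction} (iii-1) to the rigid pair consisting of $\alpha = \mc{O}_{C_b}(a)$ and each $\mc{R} \in \Sigma(\beta)_{U_b}$ with $\Supp\mc{R}\supset C_b$, combined with the newly established upper bound $\deg_{C_b}\mc{R} \leq 0$.
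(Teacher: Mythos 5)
The paper offers no argument for this Fact beyond the citation of \cite[Condition 7.2]{IU05}; there the normalization is achieved by tensoring with a line bundle dual to $C_b$ in $\Pic$ of the local resolution, which has no direct analogue on $S$, so your task is genuinely to supply a replacement. Your mechanism for $n\ge 3$ is correct and is, I think, the intended one: $\otimes\mc{O}_S(C_{b+1})$ lies in $B_0$ (the paper itself records $\otimes\mc{O}_S(C_i)\in B_j$ for $C_i\subset Z_j$ via \cite[Proposition 4.18 (i)]{IU05}), it sends $\mc{O}_{C_b}(a)$ to $\mc{O}_{C_b}(a+1)$ and shifts $\deg_{C_b}\mc{R}$ by $+1$ for every $\mc{R}\in\Sigma(\beta)_{U_b}$, and replacing $a$ by $a-m$ is harmless since Proposition \ref{proposition:step -2 of A_n} does not constrain $a$. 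One small correction to the last step: the ``Especially'' clause comes from applying Lemma \ref{lem:degree_restriction} (iii-1) to \emph{pairs of elements of} $\Sigma(\beta)$ (so that all the degrees $\deg_{C_b}\mc{R}$ differ pairwise by at most one, hence lie in $\{0,-1\}$ once the maximum is $0$), not to the pair $(\alpha,\mc{R})$, which would instead bound the degrees against $a$.

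The genuine gap is the case $n=2$, which is squarely within the scope of Theorem \ref{thm:main} and which you flag but do not close; moreover neither proposed remedy works as stated. For remedy (b): when $n=2$, every generator $T_{\mc{O}_{C_i}(k)}$ of $B_0$ changes the numerical class of $\gamma\in D_{Z_0}(S)$ by $-\chi(\mc{O}_{C_i}(k),\gamma)\,[\mc{O}_{C_i}(k)]$ with $\chi(\mc{O}_{C_i}(k),\gamma)=-C_i\cdot c_1(\gamma)\in 2\Z$ (both $C_1^2=-2$ and $C_1\cdot C_2=2$ are even), so $\chi(\mc{O}_S,\gamma)$ is preserved modulo $2$ by all of $B_0$; consequently no $\Xi\in B_0$ can carry $\mc{O}_{C_b}(a)$ to a shifted line bundle on a component whose degree has the opposite parity, and there is no ``odd correction'' inside $B_0$ compatible with keeping $\alpha$ in the required form. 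For remedy (a): you supply no argument that the maximum is even, and Lemma \ref{lem:degree_restriction_n=2} only pins the degrees of the summands of $\beta$ relative to one another, not relative to $a$, so the parity of the maximum is not controlled from the outset. To finish $n=2$ one would have to either produce $\mc{L}\in\Pic S$ with $\mc{L}\cdot C_b$ odd and use that conjugation by $\otimes\mc{L}$ preserves $B_0$ (such an $\mc{L}$ is not guaranteed to exist), or weaken the normalization to $\max\in\{0,1\}$ and check that the subsequent Facts and the case division still go through.
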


The relations between $\owe_{C_1}$ and $\owe_{C_1}(-1)$ impose conditions on $a$ and $\beta$.
From the spectral sequence 
\begin{equation}\label{equation:spectral}
E_2^{p, q}=\Ext_S^p(H^{-q}(\beta), \mc{O}_{C_b}(a)) \Longrightarrow
\Hom_{D(S)}^{p+q}(\beta, \alpha)= \begin{cases}
                                             \C^2 & p+q=0 \\
                                             0 & p+q\ne 0,
                                             \end{cases}
\end{equation}
we obtain
\begin{fact}[cf.~Condition 7.3 in \cite{IU05}]\label{condition:(2)}
$E_2^{1, q}=0$ for $q\ne -1$
\end{fact}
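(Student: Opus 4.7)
The plan is to exploit the fact that only the three columns $p\in\{0,1,2\}$ of the $E_2$-page can be nonzero, which forces the column $p=1$ to survive unchanged to $E_\infty$, and then to read off the desired vanishing from the total degree of the abutment already displayed in \eqref{equation:spectral}.

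Since $\owe_{C_b}(a)$ is a coherent sheaf on the smooth surface $S$, the global Ext $\Ext_S^p(-,\owe_{C_b}(a))$ vanishes for $p\notin\{0,1,2\}$, so $E_2^{p,q}=0$ outside these three columns. For every $r\ge 2$ the differential $d_r\colon E_r^{1,q}\to E_r^{1+r,q-r+1}$ lands in column $p=1+r\ge 3$, while the differential entering $E_r^{1,q}$ originates from column $p=1-r\le -1$; both target regions are already zero at $E_2$ and hence at every later page. Consequently $E_2^{1,q}\cong E_\infty^{1,q}$ for every $q$.

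Finally, $E_\infty^{1,q}$ is a subquotient of the abutment $\Hom_{D(S)}^{1+q}(\beta,\alpha)$, which is $\C^2$ when $q=-1$ and zero otherwise. Hence $E_2^{1,q}=0$ for $q\ne -1$, as claimed. The argument is purely formal once one notices that no nontrivial differentials can touch the column $p=1$, so there is no substantive obstacle to overcome here; the real content of the spectral sequence computation will appear in the subsequent conditions, where the vanishing imposed at $p\ne 1$ will translate into restrictions on the cohomology sheaves $H^{-q}(\beta)$.
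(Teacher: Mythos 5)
Your argument is correct and is precisely the one the paper intends: since $S$ is a smooth surface the $E_2$-page is concentrated in columns $p=0,1,2$, so the middle column receives and emits no nonzero differentials, survives to $E_\infty$, and must therefore vanish wherever the abutment $\Hom^{1+q}_{D(S)}(\beta,\alpha)$ does. The paper states Fact \ref{condition:(2)} without writing out this routine verification, so there is nothing to compare beyond noting agreement.
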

\noindent
and
\begin{fact}[cf.~Condition 7.4 in \cite{IU05}]\label{condition:(3)}
$d_2^{0,-1}:E_2^{0,-1}\to E_2^{2,-2}$ is injective, 
$d_2^{0, 0}:E_2^{0, 0}\to E_2^{2,-1}$ is surjective,
and $d_2^{0, q}:E_2^{0, q}\to E_2^{2,q+1}$ are isomorphisms for all $q\ne 0,-1$.
\end{fact}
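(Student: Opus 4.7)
The plan is to read off the three claims from the convergence of the spectral sequence \eqref{equation:spectral} together with the vanishing provided by Fact \ref{condition:(2)}. Because $S$ is a smooth surface, the columns of the $E_2$-page are concentrated in $p \in \{0,1,2\}$, so the only differentials appearing anywhere in the spectral sequence are of the form $d_2^{0,q}\colon E_2^{0,q}\to E_2^{2,q-1}$. In particular $E_3=E_\infty$, and one has the identifications
\begin{equation*}
E_\infty^{0,q} = \Ker d_2^{0,q}, \qquad E_\infty^{2,q-1} = \Coker d_2^{0,q}, \qquad E_\infty^{1,q} = E_2^{1,q}.
\end{equation*}

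Next I would sum along total degrees. Convergence of \eqref{equation:spectral} asserts that, for each $n\in\Z$, the three non-negative integers $\dim E_\infty^{p,n-p}$ with $p=0,1,2$ add up to $\dim \Hom^n_{D(S)}(\beta,\alpha)$, which is $2$ when $n=0$ and $0$ otherwise. By Fact \ref{condition:(2)}, the middle term $E_\infty^{1,n-1}=E_2^{1,n-1}$ vanishes whenever $n\ne 0$. Consequently, for every $n\ne 0$ the three dimensions are forced to be zero; in particular $E_\infty^{0,n}=0$ and $E_\infty^{2,n-2}=0$ for all $n\ne 0$.

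Translating these vanishings back through the identifications above: $\Ker d_2^{0,q}=E_\infty^{0,q}=0$ for every $q\ne 0$, which is exactly the injectivity of $d_2^{0,q}$ for $q\ne 0$; this covers both $d_2^{0,-1}$ and $d_2^{0,q}$ for $q\ne 0,-1$. Symmetrically, $\Coker d_2^{0,q}=E_\infty^{2,q-1}=0$ whenever $(q-1)+2\ne 0$, i.e.\ for every $q\ne -1$, giving the surjectivity of $d_2^{0,q}$ for $q\ne -1$; this covers $d_2^{0,0}$ and the remaining cases. Combining injectivity and surjectivity, $d_2^{0,q}$ is an isomorphism for every $q\ne 0,-1$, as required.

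The argument is pure book-keeping inside the spectral sequence, so I do not anticipate a serious obstacle. The only care needed is to confirm that no higher differentials $d_r$ with $r\ge 3$ can survive—which is automatic from the fact that $S$ is a surface, hence $E_r^{p,q}=0$ outside $p\in\{0,1,2\}$—and that the bidegree $(2,-1)$ of $d_2$ is tracked correctly so that the kernel and cokernel of each $d_2^{0,q}$ lie on the diagonals predicted by convergence.
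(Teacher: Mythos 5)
Your proof is correct and is essentially the argument the paper intends: the Fact is stated as a direct consequence of the convergence of the spectral sequence \eqref{equation:spectral} (following Condition 7.4 of \cite{IU05}), namely that on a surface only the $d_2^{0,q}$ differentials can be nonzero, so $E_3=E_\infty$ and the vanishing of $\Hom^n(\beta,\alpha)$ for $n\ne 0$ forces the stated injectivity/surjectivity/isomorphism properties. Note only that you correctly track the differential as $d_2^{0,q}\colon E_2^{0,q}\to E_2^{2,q-1}$, silently fixing the index typo in the statement, and that the appeal to Fact \ref{condition:(2)} is not actually needed since all three graded pieces in a fixed total degree $n\ne 0$ vanish automatically.
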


\noindent
In addition to Facts \ref{condition:(2)} and \ref{condition:(3)}, \eqref{equation:spectral} implies
\begin{equation}\label{equation:addCondition(3)}
\dim \Coker d_2^{0, -1}+\dim \Ker d_2^{0,0}+\dim E_2^{1, -1} =2.
\end{equation}


To show the following, we need the assumption that 
$\Phi$ preserves the cohomology class $\ch (\mc{O}_x)$ in 
$H^4(S,\Q)$.\footnote{If we drop this assumption, 
we can just conclude $c_1(\beta)=[C_b]+k[Z_0]$ for some $k\in \Z$ by a similar proof to that of Condition 7.5 in \cite{IU05}.}


\begin{fact}[cf.~Condition 7.5 in \cite{IU05}]\label{condition:(4)}
The equality $c_1(\beta)=[C_b]$ holds in the cohomology group $H^2(S,\Q)$.
\end{fact}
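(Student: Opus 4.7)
The plan is to prove this by a direct Grothendieck-group/Chern-character computation. For any point $P \in C_1 \subset Z_0$, the short exact sequence
$$
0 \to \owe_{C_1}(-1) \to \owe_{C_1} \to \owe_P \to 0
$$
gives the relation $[\owe_{C_1}(-1)] = [\owe_{C_1}] - [\owe_P]$ in the Grothendieck group $K_0(D_{Z_0}(S))$. Applying the autoequivalence $\Phi$ and passing to Chern characters in $H^*(S,\Q)$ yields
$$
\ch(\beta) = \ch(\alpha) - \ch(\Phi(\owe_P)).
$$

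Next I would compute $\ch(\alpha)$ explicitly. Since $C_b$ is a $(-2)$-curve on the relatively minimal elliptic surface $S$, adjunction gives $C_b \cdot K_S = 0$, and Grothendieck--Riemann--Roch applied to the closed embedding $C_b \cong \PP^1 \hookrightarrow S$ produces $\ch(\alpha) = \ch(\owe_{C_b}(a)) = [C_b] + (a+1)\,\ch(\owe_x)$ for any point $x \in S$. The hypothesis on $\Phi$ asserts exactly that $\ch(\Phi(\owe_P)) = \ch(\owe_P) = \ch(\owe_x)$, since $P \in Z_0$. Substituting, $\ch(\beta) = [C_b] + a\,\ch(\owe_x)$, so in particular $c_1(\beta) = [C_b]$ in $H^2(S,\Q)$, which is the desired conclusion.

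The one conceptual point to be careful about is the interpretation of the preservation hypothesis. The class $\ch(\owe_x)$ lives in $H^4(S,\Q) \subset H^*(S,\Q)$, and ``$\Phi$ preserves the class $\ch(\owe_x)$'' should be read as the equality of full cohomology classes $\ch(\Phi(\owe_P)) = \ch(\owe_P)$ in $H^*(S,\Q)$, not merely equality of the $H^4$-components; under this (natural) reading one has in addition $c_1(\Phi(\owe_P)) = 0$ in $H^2(S,\Q)$, which is precisely the input that makes the subtraction above give $c_1(\beta) = [C_b]$ rather than just $c_1(\beta) \in [C_b] + \Z[Z_0]$. This matches the footnote exactly: without the hypothesis, one could still derive the weaker statement $c_1(\beta) \in [C_b] + \Z[Z_0]$ from the preserved Euler form on the $(-2)$-curve lattice $\Lambda$ (whose null direction is spanned by $[Z_0]$), in analogy with Condition 7.5 of \cite{IU05}.
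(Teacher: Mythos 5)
Your proposal is correct and is essentially the paper's own argument: the paper likewise uses $[\mc{O}_x]=[\owe_{C_1}]-[\owe_{C_1}(-1)]$ together with the hypothesis that $\Phi$ preserves $\ch(\mc{O}_x)$ to get $0=c_1(\Phi(\mc{O}_x))=c_1(\alpha)-c_1(\beta)$, hence $c_1(\beta)=c_1(\alpha)=[C_b]$. Your extra Grothendieck--Riemann--Roch computation of $\ch(\alpha)$ is harmless but unnecessary for the $H^2$-statement, and your reading of the preservation hypothesis (full Chern character, not just the $H^4$-component) is exactly the one the paper's proof uses.
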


\begin{proof}
\if0
The radical of the Euler form
$\chi\colon K_Z(S)\times K_Z(S) \to \Z$ is 
$$
\Z[\owe _x]\oplus \Z [\owe _Z].
$$
(In $A_n$-case in \cite{IU05}, this is just $\Z[\owe _x]$.)
Then by a similar proof in \cite[Condition 7.5]{IU05} 
 we obtain $c_1(\beta)=C_b+k Z$ for some $k\in \Z$.
\fi
By the choice of $\Phi$, we have
$$
0=c_1(\mc O_x)=c_1(\Phi(\mc{O}_x))=c_1(\alpha)-c_1(\beta),
$$
which gives the assertion.
\end{proof}


\begin{cla}[cf.~Claim 7.6 in \cite{IU05}]\label{claim:a>=-1}
We have $a\ge -1$.
\end{cla}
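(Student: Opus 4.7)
The plan is to argue by contradiction: assume $a \le -2$ and derive a contradiction using the spectral sequence \eqref{equation:spectral} together with Facts \ref{condition:(1)}--\ref{condition:(4)}, paralleling the structure of \cite[Claim 7.6]{IU05}.

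First I will locate a direct summand $\mc{R} \in \Sigma(\beta)_{U_b}$ containing $C_b$ in its support. Such an $\mc{R}$ must exist: otherwise no summand of any $H^p(\beta)$ would involve $C_b$, contradicting $c_1(\beta)=[C_b]$ from Fact \ref{condition:(4)}. By Fact \ref{condition:(1)}, such a summand satisfies $\deg_{C_b}\mc{R} \in \{0,-1\}$.

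Next I will bound $\dim \Ext^1_S(\mc{R}, \mc{O}_{C_b}(a))$ from below. Writing $\mc{R} = \mc{S}_s(a_s,\ldots,a_t)$ via Proposition \ref{prop:BBDG} with $C_l = C_b$ and $a_l = \deg_{C_b}\mc{R}$, and applying the short exact sequences coming from the pushforward description together with formula \eqref{ali:ext^1}, I will show that $\dim\Ext^1_S(\mc{R}, \mc{O}_{C_b}(a)) > 0$ whenever $a \le -2$. The simplest case $\mc{R} = \mc{O}_{C_b}(0)$ gives $\dim\Ext^1 = -a-1 \ge 1$ directly. By Fact \ref{condition:(2)}, this non-vanishing forces $\mc{R}$ to appear as a summand of $H^1(\beta)$, contributing to $E_2^{1,-1}$.

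For the contradiction, I use equation \eqref{equation:addCondition(3)}, which bounds $\dim E_2^{1,-1} \le 2$. When $a \le -4$, already $\dim\Ext^1_S(\mc{O}_{C_b}(0), \alpha) = -a - 1 \ge 3 > 2$ yields an immediate contradiction. For the intermediate cases $a \in \{-2, -3\}$, I refine by also computing $\dim\Ext^2_S(\mc{R}, \alpha)$ via Grothendieck--Serre duality (using $\omega_S|_{C_b}\cong \mc{O}_{C_b}$, which holds since $K_S \cdot C_b = 0$), and this contributes to $E_2^{2,-1}$. The surjectivity of $d_2^{0,0}: E_2^{0,0} \to E_2^{2,-1}$ from Fact \ref{condition:(3)} then forces $\dim E_2^{0,0}$ to be large, and combining this with the remaining terms of \eqref{equation:addCondition(3)} closes the argument.

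The main obstacle will be the boundary case $a = -2$ in which every summand of $\bigoplus_p H^p(\beta)$ containing $C_b$ happens to have $\deg_{C_b}\mc{R} = -1$, since then $\Ext^1_S(\mc{O}_{C_b}(-1), \mc{O}_{C_b}(-2)) = 0$ by \eqref{ali:ext^1} and the simplest non-vanishing estimate fails. In that situation one must instead work with summands whose support strictly contains $C_b$, exploiting the long exact sequences that decompose $\mc{R}$ along the components of its support, much as in the analogous $A_n$ computations from \cite{IU05} invoked in the proofs of Lemmas A and B.
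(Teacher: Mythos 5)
Your toolkit is the right one --- the spectral sequence \eqref{equation:spectral} together with Facts \ref{condition:(1)}--\ref{condition:(3)} is exactly what the cited argument \cite[Claim 7.6]{IU05} runs on --- and your treatment of $a\le -4$ via the bound $\dim E_2^{1,-1}\le 2$ from \eqref{equation:addCondition(3)} is fine. But the endgame you propose for $a\in\{-2,-3\}$ is not an argument. You correctly observe that $\Ext^2_S(\mc{R},\alpha)\cong\Hom_S(\mc{O}_{C_b}(a),\mc{R})^\vee\ne 0$ forces $E_2^{2,-1}\ne 0$, and that surjectivity of $d_2^{0,0}$ then makes $E_2^{0,0}=\Hom_S(H^0(\beta),\mc{O}_{C_b}(a))$ nonzero (indeed ``large''). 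The contradiction, however, cannot come from \eqref{equation:addCondition(3)}: that identity only constrains $\Ker d_2^{0,0}$, $\Coker d_2^{0,-1}$ and $E_2^{1,-1}$, and a large $E_2^{0,0}$ with small kernel is perfectly compatible with it. What actually closes the argument is the \emph{vanishing} $\Hom_S(H^0(\beta),\mc{O}_{C_b}(a))=0$ for $a\le -2$: by Fact \ref{condition:(1)} and Lemma \ref{lem:degree_restriction} (iii-1), every $\mc{R}'\in\Sigma(\beta)_{U_b}$ with $C_b\subset\Supp\mc{R}'$ has $\deg_{C_b}\mc{R}'\in\{0,-1\}$, and a nonzero map $\mc{R}'\to\mc{O}_{C_b}(a)$ would factor through the torsion-free quotient $\mc{O}_{C_b}(\deg_{C_b}\mc{R}')$ of $\mc{R}'|_{C_b}$, which admits no nonzero map to $\mc{O}_{C_b}(a)$ when $a\le-2$. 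This vanishing, played against the surjectivity of $d_2^{0,0}$ and $E_2^{2,-1}\ne 0$, is the whole proof; it works uniformly for all $a\le -2$, so the case division on the value of $a$ and the appeal to \eqref{equation:addCondition(3)} are unnecessary.

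A second point: the ``boundary case'' you single out as the main obstacle --- every summand through $C_b$ having degree $-1$ there --- cannot occur. Fact \ref{condition:(1)} is a normalization stating that the \emph{maximum} of $\deg_{C_b}$ over summands containing $C_b$ equals $0$, so a summand $\mc{R}$ with $\deg_{C_b}\mc{R}=0$ always exists; this is the summand on which both non-vanishings $\Ext^1_S(\mc{R},\alpha)\ne0$ and $\Ext^2_S(\mc{R},\alpha)\ne 0$ are established (filtering $\mc{R}$ by the kernel of restriction to $C_b$ shows these survive when $\Supp\mc{R}$ is larger than $C_b$, since the kernel is supported off $C_b$ and contributes nothing to $\Hom$ or $\Ext^2$ against $\mc{O}_{C_b}(a)$). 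So the case you flag as requiring extra work is vacuous, while the case that genuinely needs the key input --- the Hom-vanishing above --- is the one your sketch leaves open.
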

\begin{proof}
The proof is similar to that of 
 \cite[Claim 7.6]{IU05}. 
\end{proof}
\if0
\begin{cla}\label{claim:usuful0}
Fix $q \ne 1$.
Then we have $\deg_{C_b}\mc{R}>a$ for all 
direct summands $\mc{R} \in \Sigma(\beta)_{U_b}$ of $H^q(\beta)|_{U_{b}}$ with 
$$\Supp\mc{R}\cap (C_{b-1}\cup C_b\cup C_{b+1})=\Supp\mc{R}.$$
If, in addition, we suppose that $a\ge 0$, 
 there are no such direct summands.
\end{cla}
\begin{proof}
Fact \ref{condition:(2)} and 
Lemma \ref{lem:degree_restriction} yields the equality $a<\deg_{C_b}\mc{R}_i$.

Fact \ref{condition:(1)} implies that  
$\deg_{C_b}\mc{R}_i=-1$ or $0$, hence we obtain the second assertion.
\end{proof}
\fi

\begin{cla}[cf.~Claim 7.7 in \cite{IU05}]\label{claim:useful}
Fix $q \ne 0$.
If $E^{2,-q-1}_2= 0$ in \eqref{equation:spectral}, then we have $\deg_{C_b}\mc{R}>a$ for all 
direct summands $\mc{R} \in \Sigma(\beta)_{U_b}$ of $H^q(\beta)|_{U_b}$ with 
$\Supp \mc{R} \supset C_b$.
If, in addition, we suppose that $a\ge 0$, then we get $C_b\not\subset\Supp H^q(\beta)$.
\end{cla}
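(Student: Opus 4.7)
The plan is to mimic the argument of Claim 7.7 in \cite{IU05}, working locally on $U_b$ where the cycle $\tilde{\mb{I}}_n$ degenerates to a chain. The key input is the vanishing of $\Hom_S(H^q(\beta),\mc{O}_{C_b}(a))$ extracted from the spectral sequence \eqref{equation:spectral}. The outgoing differential at the position $E_2^{0,-q}=\Hom_S(H^q(\beta),\mc{O}_{C_b}(a))$ is $d_2^{0,-q}\colon E_2^{0,-q}\to E_2^{2,-q-1}$. When $q\ne 0,1$, Fact \ref{condition:(3)} says that this differential is an isomorphism, so the hypothesis $E_2^{2,-q-1}=0$ immediately yields $E_2^{0,-q}=0$. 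When $q=1$, Fact \ref{condition:(3)} gives only that $d_2^{0,-1}$ is injective, but the hypothesis $E_2^{2,-2}=0$ suffices to force $E_2^{0,-1}=0$ in this case too. Hence for every $q\ne 0$ we obtain $\Hom_S(H^q(\beta),\mc{O}_{C_b}(a))=0$.

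Next I would localize. Writing $i\colon U_b\hookrightarrow S$ for the open embedding, the sheaf $\mc{O}_{C_b}(a)$ is supported on $C_b\subset U_b$ and therefore agrees with the $i_*$-pushforward of its own restriction to $U_b$. The adjunction between $i^*$ and $i_*$ then produces the identification
\[
\Hom_S(H^q(\beta),\mc{O}_{C_b}(a))\cong \Hom_{U_b}(H^q(\beta)|_{U_b},\mc{O}_{C_b}(a)),
\]
so the local Hom also vanishes. Because $\mc{R}$ is a direct summand of $H^q(\beta)|_{U_b}$, one obtains $\Hom_{U_b}(\mc{R},\mc{O}_{C_b}(a))=0$.

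The degree bound is now a purely local computation. Since $U_b$ is a small complex-analytic neighborhood in which $Z_0\cap U_b=D_{b-1}\cup C_b\cup D_{b+1}$ is a simply-connected chain, Proposition \ref{prop:BBDG} simplifies: any indecomposable torsion-free coherent sheaf on $U_b$ supported on $Z_0\cap U_b$ and containing $C_b$ in its support is a line bundle on a connected sub-chain through $C_b$, so its restriction to $C_b$ is a single line bundle $\mc{O}_{C_b}(d)$ with $d=\deg_{C_b}\mc{R}$. The tensor-hom adjunction then identifies
\[
\Hom_{U_b}(\mc{R},\mc{O}_{C_b}(a))=\Hom_{C_b}(\mc{O}_{C_b}(d),\mc{O}_{C_b}(a))=H^0(C_b,\mc{O}_{C_b}(a-d)),
\]
which is nonzero whenever $d\le a$; the vanishing above therefore forces $d>a$, proving the first assertion. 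For the second assertion I would combine this with Fact \ref{condition:(1)}, which gives $\deg_{C_b}\mc{R}\le 0$ for every summand in $\Sigma(\beta)_{U_b}$ whose support contains $C_b$: if $a\ge 0$ then the two inequalities $0\ge\deg_{C_b}\mc{R}>a\ge 0$ are incompatible, so no such summand exists and consequently $C_b\not\subset\Supp H^q(\beta)$.

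I expect the main subtlety to be the local-to-global structural input used in the third paragraph, namely that the indecomposable summand $\mc{R}\in\Sigma(\beta)_{U_b}$ really does restrict to a single line bundle on $C_b$. Globally the $\tilde{A}_n$-geometry admits the non-trivial pushforward summands $\mc{S}_s(a_s,\dots,a_t)$ of Proposition \ref{prop:BBDG}, but upon passage to the analytic neighborhood $U_b$ of a single component the finite cover trivializes and each such summand reduces to a line bundle on a sub-chain; this degeneration is what allows the single degree $d$ to appear in the final computation, and it is exactly the mechanism by which the $\tilde{A}_n$-case of Claim \ref{claim:useful} reduces to the $A_n$-argument of \cite{IU05}.
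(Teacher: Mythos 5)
Your argument is correct and is essentially the argument the paper intends: the paper's proof of Claim \ref{claim:useful} is just the reference to \cite[Claim 7.7]{IU05}, whose mechanism is exactly what you spell out — extract $\Hom_S(H^q(\beta),\mc{O}_{C_b}(a))=0$ from Fact \ref{condition:(3)} together with the hypothesis $E_2^{2,-q-1}=0$, localize to $U_b$, and compute the Hom from a line bundle on a subchain to $\mc{O}_{C_b}(a)$ to get the degree bound, then combine with Fact \ref{condition:(1)} for the second assertion. Your closing remark correctly identifies and handles the only point where the $\tilde{A}_n$ case needs care beyond \cite{IU05}, namely that a global summand $\mc{S}_s(a_s,\dots,a_t)$ may wrap around the cycle, which is why the statement and the argument are phrased in terms of the local summands in $\Sigma(\beta)_{U_b}$.
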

\begin{proof}
The proof of Claim \ref{claim:useful} is similar to that of 
 \cite[Claim 7.7]{IU05}. 
\end{proof}

 
\begin{rem}\label{rem:nonempty}
Below we use the notation $t(\mc{R})$ and
$s(\mc{R})$ for $\mc{R}\in\Sigma (\beta)$ defined in 
\S \ref{subsec:step -1 of A_n}, and recall 
that $b$ satisfies $1\le b\le n$ by the definition. 

If there is an element $\mc{R}\in\Sigma (\beta)$ with either 
$$
t(\mc{R})-n+1<b<s(\mc{R})-1 
\text{ or }
 t(\mc{R})+1<b,
$$
then we can find 
$\Psi\in \Span{T_{\mc{O}_{C_l}(a)} \bigm| a\in\Z, C_l \subset \Supp \mc{R} }$
such that $\Psi(\alpha)\cong \alpha$ and 
$l(\Psi(\beta))< l(\beta)$
by Remark \ref{remark:R_0}.
Therefore, we may assume that
$$
s(\mc{R})-1 \le b \le t(\mc{R})+1 \text{ or } 
b+n \le t(\mc{R})+1
$$
for all $\mc{R}\in\Sigma (\beta)$.
\end{rem}

Now we divide the proof into cases as in \cite[Division into cases on page 426]{IU05}.
We have only to consider the three cases:

\begin{division}
\begin{enumerate}
\item[(i)] $C_b\subset\Supp\mc{R}$ for all $\mc{R}\in\Sigma(\beta)_{U_b}$,
\item[(ii)] 
there is an $\mc{R} \in \Sigma(\beta)_{U_b}$ with $\Supp \mc{R} \cap C_b = C_{b+1} \cap C_b$,
but there is no $\mc{R}' \in \Sigma(\beta)_{U_b}$ with $\Supp \mc{R}' \cap C_b = C_{b-1} \cap C_b$,
\item[(iii)]
there are $\mc{R}, \mc{R}' \in \Sigma(\beta)_{U_b}$ with 
$\Supp \mc{R} \cap C_b = C_{b+1} \cap C_b$
and $\Supp \mc{R}' \cap C_b = C_{b-1} \cap C_b$.
\end{enumerate}
\end{division}

%
\subsection{Case (i)}
In this case, we can find $\Psi$ in Claim \ref{cla:l(beta),l(alpha)} in a similar way to 
that of \cite[\S7.3. Case (i)]{IU05},
after some obvious changes; for instance
 \cite[Claim 7.8]{IU05} should be replaced by
\begin{cla}
$$
\mc{O}_{\cdots\cup C_b}(*,-1)|_{U_{b}},\mc{O}_{C_b\cup\cdots}(-1,*)|_{U_{b}}, \mc{O}_{\cdots\cup C_b\cup\cdots}(*,-1,*)|_{U_{b}}
\not\in\Sigma(\beta)_{U_{b}}.
$$
\end{cla}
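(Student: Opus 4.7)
I argue by contradiction for each of the three shapes. Suppose $\mc{R}\in\Sigma(\beta)_{U_{b}}$ has one of the listed forms; in each case $\deg_{C_b}\mc{R}=-1$ and $\Supp\mc{R}$ inside $U_b$ extends past $C_b$ on at least one side. The Case~(i) hypothesis and Fact~\ref{condition:(1)} produce some $\mc{R}_0\in\Sigma(\beta)_{U_b}$ with $\deg_{C_b}\mc{R}_0=0$ and $C_b\subset\Supp\mc{R}_0$. By Lemma~\ref{lem:cohom_spherical}(ii), $\mc{R}$ and $\mc{R}_0$ are restrictions to $U_b$ of global indecomposable summands $\tilde{\mc{R}}=\mc{S}_{s_1}(a_{s_1},\ldots,a_{t_1})$ and $\tilde{\mc{R}}_0=\mc{S}_{s_2}(b_{s_2},\ldots,b_{t_2})$ in $\Sigma(\beta)$. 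Fix indices $l,m$ of the two chains with $C_l=C_m=C_b$, $a_l=-1$, $b_m=0$; the three shapes for $\mc{R}$ correspond respectively to $s_1<l=t_1$, $s_1=l<t_1$, and $s_1<l<t_1$.

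The core step is to apply the appropriate part of Lemma~\ref{lem:degree_restriction}(iii) to $(\tilde{\mc{R}},\tilde{\mc{R}}_0)$, possibly after swapping roles, to contradict the mismatch $a_l-b_m=-1$. For shape~3 against an $\mc{R}_0$ that starts at $C_b$ (i.e.~$s_2=m<t_2$), part (iii-2) forces $a_l\ge b_m$, i.e.~$-1\ge 0$. Against $\mc{R}_0=\mc{O}_{C_b}(0)$ (i.e.~$s_2=m=t_2$), part (iii-3) gives $a_l=b_m+1=1$. Against $\mc{R}_0$ ending at $C_b$ (i.e.~$s_2<m=t_2$), the role-swapped (iii-4) yields $a_l=b_m=0$. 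Shapes~1 and~2 are exchanged by the left--right symmetry of $\tilde{\mb{I}}_n$, and each is disposed of by the same playbook, pairing $\tilde{\mc{R}}$ with the corresponding $\tilde{\mc{R}}_0$ via (iii-2) or (iii-4).

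The main obstacle is the residual configuration in which $\tilde{\mc{R}}$ has shape~3 \emph{and} every available $\tilde{\mc{R}}_0$ also passes through $C_b$ (so $s_2<m<t_2$); in that case Lemma~\ref{lem:degree_restriction}(iii) collapses to (iii-1), which only gives $|-1-0|\le 1$ and does not suffice. For this leftover I invoke Claim~\ref{claim:useful}: writing $\mc{R}$ as a direct summand of $H^q(\beta)|_{U_b}$, the spectral-sequence Facts~\ref{condition:(2)} and~\ref{condition:(3)} yield the vanishing $E_2^{2,-q-1}=0$ in the relevant range of $q$, whence Claim~\ref{claim:useful} forces $\deg_{C_b}\mc{R}>a$. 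Combined with the lower bound $a\ge-1$ of Claim~\ref{claim:a>=-1}, this contradicts $\deg_{C_b}\mc{R}=-1$ and completes the proof.
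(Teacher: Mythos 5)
Your strategy --- playing a degree-$(-1)$ summand off against a degree-$0$ summand via Lemma \ref{lem:degree_restriction} (iii) --- covers fewer configurations than you claim, and the ones it misses are exactly those where rigidity alone gives nothing. Parts (iii-2)--(iii-4) are asymmetric: they require one of the two chains to \emph{begin} (or, by the left--right symmetry, to \emph{end}) at $C_b$, oriented compatibly with the other. For shape 1, $\mc{R}=\mc{O}_{\cdots\cup C_b}(*,-1)$, your playbook produces a contradiction only if some degree-$0$ element \emph{starts} at $C_b$ or equals $\mc{O}_{C_b}(0)$; if every degree-$0$ element also ends at $C_b$, or passes through it, the only applicable statement is (iii-1), which reads $|-1-0|\le 1$ and is vacuous. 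This is not a presentational gap: one checks directly via \eqref{eqn:euler} and Serre duality that $\mc{O}_{C_{b-1}\cup C_b}(c,0)\oplus \mc{O}_{C_{b-1}\cup C_b}(c,-1)$ is rigid, so no contradiction can be extracted from the rigidity of $\bigoplus_pH^p(\beta)$ in that configuration. The same problem recurs for shape 2 and, as you yourself note, for shape 3 against a passing-through degree-$0$ summand. (Minor: for shape 3 against an $\mc{R}_0$ ending at $C_b$ the correct tool is the mirror of (iii-2) with $\mc{S}_1=\tilde{\mc{R}}$, not a role-swapped (iii-4), since $\mc{R}$ does not begin at $C_b$.)

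Your escape route for the leftover case also does not close. Claim \ref{claim:useful} needs $q\ne 0$ and $E_2^{2,-q-1}=\Ext^2_S(H^{q+1}(\beta),\mc{O}_{C_b}(a))\cong\Hom_S(\mc{O}_{C_b}(a),H^{q+1}(\beta))^\vee=0$. Neither is free: $\mc{R}$ may well sit in $H^0(\beta)|_{U_b}$, and Fact \ref{condition:(3)} gives \emph{isomorphisms} $E_2^{0,q}\cong E_2^{2,q+1}$, not vanishing, so the assertion that Facts \ref{condition:(2)} and \ref{condition:(3)} ``yield'' $E_2^{2,-q-1}=0$ is not true without an argument about the degrees on $C_b$ of the summands of $H^{q+1}(\beta)$ --- which is essentially what is being proved. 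The underlying issue is that this Claim is a statement about $\beta$ \emph{relative to} $\alpha=\mc{O}_{C_b}(a)$; it cannot follow from pairwise rigidity inside $\Sigma(\beta)$ alone. The intended argument (the analogue of \cite[Claim 7.8]{IU05}, which the paper invokes without repeating) computes $\Ext^1_S(\mc{R},\mc{O}_{C_b}(a))$ and $\Hom_S(\mc{O}_{C_b}(a),\mc{R})$ for each of the three shapes and shows that their non-vanishing patterns are incompatible with Facts \ref{condition:(2)}, \ref{condition:(3)}, the bound \eqref{equation:addCondition(3)}, and $a\ge -1$ from Claim \ref{claim:a>=-1}. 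That spectral-sequence input is needed in every case, not just the leftover one.
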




%
\subsection{Case (ii)}

The existence of $\mc{R} \in \Sigma(\beta)_{U_b}$ with $\Supp \mc{R} \cap C_b=C_b \cap C_{b+1}$
and Lemma \ref{lem:degree_restriction} (ii) imply the non-existence of $\mc{S} \in \Sigma(\beta)_{U_b}$ with
$\Supp \mc{S} \cap C_{b+1} = C_b\cap C_{b+1}$.
Note that $n> 2$ in this case by Lemma \ref{lem:degree_restriction_n=2} (i).
Furthermore, we have
\begin{align*}
\Sigma(\beta)_{U_{b}}\subset&\bigl\{ \mc{O}_{C_b\cup\cdots}(a',*)|_{U_{b}}, \mc{O}_{C_{b+1}}(*)|_{U_{b}},
\mc{O}_{\cdots\cup C_b\cup\cdots}(*,a',*)|_{U_{b}}\bigm|a'=-1,0 \}.
\end{align*}
Then we can find $\Psi$ in Claim \ref{cla:l(beta),l(alpha)} in a similar way to that of \cite[Case (ii)]{IU05}.
\if0
Here again we need some minor changes; 
for instance in the case $a=0$, 
we can see the equality 
$$
(H^0(\beta),H^1(\beta))=
(\mc{O}_{C_b\cup\cdots\cup C_{b''+kn}}(0,*),
\mc{O}_{C_{b+1}\cup\cdots\cup C_{b''+kn}}(*))
$$
holds for some $k\in \Z_{\ge 0}$ and 
$b+1\le b''\le b+n-2$,
instead of the equality in \cite[page 428, line 6]{IU05}.
\fi

\subsection{Case (iii)}

Fact \ref{condition:(2)} implies that $\mc{R}$ 
and $\mc{R}'$ above must be in $H^1(\beta)$.
Moreover, 
they are unique in a decomposition of $H^1(\beta)$,
by virtue of the inequality $\dim E_2^{1, -1} \le 2$ from \eqref{equation:addCondition(3)}.

We can exclude the case $n=2$ as follows. Suppose that $n=2$. Recall that 
$\mc{R}$ is the unique element in $\Sigma(\beta)$ such that $C_{s(\mc {R})}\ne C_{b}$, 
and $\mc{R}'$ is the unique element in $\Sigma(\beta)$ such that $C_{t(\mc {R}')}\ne C_{b}$. 
It follows from 
Lemma \ref{lem:degree_restriction_n=2} (i) 
that every element $\mc{S}\in \Sigma (\beta)$ satisfies that $C_{s(\mc {S})}=C_{t(\mc{S})}$.
Therefore 
$\beta[-1]$ is an indecomposable sheaf $\mc {S}(=\mc{R}=\mc{R}')$ satisfying $C_{s(\mc {S})}=C_{t(\mc{S})}\ne C_b$. 
In this case, Fact \ref{condition:(4)} cannot be satisfied.

Now, Lemma \ref{lem:degree_restriction} allows us to write
$$
\bigoplus_p H^p(\beta)|_{U_{b-1,b}}=
\bigoplus_j^{r_1}\mathcal{R}_{1,j}\oplus\bigoplus_j^{r_2}\mathcal{R}_{2,j}
\oplus\mathcal{R}_3\oplus\mathcal{R}_4,
$$
where $\mathcal{R}_{k,j}$'s, $\mathcal{R}_3$ and $\mathcal{R}_4$ are sheaves in $\Sigma(\beta)_{U_{b-1,b}}$ 
of the following forms:
\[ 
\xymatrix@R=1ex@M=.3ex{     &               & C_{b-1} & C_b &  C_{b+1}& \\
          \mathcal{R}_{1,j}: & \ar@{-}[r]&{\no}\ar@{-}[r]&{\no}\ar@{-}[r]&{\no}\ar@{--}[r]&   &\\         
          \mathcal{R}_{2,j}: &           &{\no}\ar@{-}[r]&{\no}\ar@{-}[r]&{\no}\ar@{--}[r]&   &\\
          \mathcal{R}_3    : & \ar@{--}[r]&{\mone}          &                 &{\no}\ar@{--}[r]& & :\mathcal{R}_4\\
          \alpha:           &          &                  &{\cia}          &                 &}
\]
Here, we assume that $\deg_{C_{b-1}}\mc{R}_3=-1$ for simplicity.

\begin{cla}
We have $a=-1$.
\end{cla}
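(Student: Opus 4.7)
I argue by contradiction: assume $a \ge 0$.

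The first step pins down $\dim E_2^{1,-1}$. Since $\mathcal{R}_3$ meets $\mathcal{O}_{C_b}(a)$ only at the node $C_{b-1} \cap C_b$, a local Koszul resolution in local coordinates at this node yields $\Hom_S(\mathcal{R}_3, \mathcal{O}_{C_b}(a)) = 0$ and $\Ext^1_S(\mathcal{R}_3, \mathcal{O}_{C_b}(a)) \cong \C$ (consistent with $\chi_S = -c_1(\mathcal{R}_3)\cdot [C_b] = -1$); analogously for $\mathcal{R}_4$ at the node $C_b \cap C_{b+1}$. Hence $\dim E_2^{1,-1} \ge 2$, and by \eqref{equation:addCondition(3)} equality must hold, with $\Coker d_2^{0,-1} = \Ker d_2^{0,0} = 0$. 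An additional local summand of type $\mathcal{R}_{1,j}$ or $\mathcal{R}_{2,j}$ in $H^1(\beta)|_{U_{b-1,b}}$ would push $\dim E_2^{1,-1}$ above $2$: by Fact \ref{condition:(1)} its restriction to $C_b$ has degree in $\{0,-1\}$, so for $a \ge 0$ one has a non-zero Hom to $\mathcal{O}_{C_b}(a)$, and a local Ext computation (of the type performed in the proof of Lemma \ref{lem:restriction3}) forces $\Ext^1 \ge 1$. Hence no such summand lies in $H^1(\beta)$.

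Next, I inductively show $C_b \not\subset \Supp H^p(\beta)$ for every $p \ne 0$. For the largest $p$ with $H^p(\beta) \ne 0$, $E_2^{2,-p-1} = 0$ trivially, so Claim \ref{claim:useful} excludes $C_b$ from $\Supp H^p(\beta)$. Since $\omega_S|_{C_b} \cong \mathcal{O}_{C_b}$ (by adjunction using $C_b^2 = -2$), Serre duality yields $\Ext^2(H^p, \mathcal{O}_{C_b}(a)) = 0$ (a map $\mathcal{O}_{C_b}(a) \to H^p$ must vanish because $H^p$ is torsion-free on $Z_0$ not containing $C_b$, by Lemma \ref{lem:cohom_spherical}), and the induction descends to $p = 1$. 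Combined with $\Ker d_2^{0,0} = 0$ and the resulting $E_2^{2,-1} = 0$, this forces $E_2^{0,0} = \Hom_S(H^0(\beta), \mathcal{O}_{C_b}(a)) = 0$, and the same Fact \ref{condition:(1)} argument then prevents $C_b$ from lying in $\Supp H^0(\beta)$ either.

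The final contradiction comes from Fact \ref{condition:(4)}: $c_1(\beta) = [C_b]$ in $H^2(S, \mathbb{Q})$. Since $C_b$ does not lie in $\Supp H^p(\beta)$ for any $p$, each $c_1(H^p(\beta))$ lies in the $\mathbb{Q}$-subspace $V \subset H^2(S,\mathbb{Q})$ spanned by $\{[C_i] \mid i \ne b\}$, hence so does $c_1(\beta) = \sum_p (-1)^p c_1(H^p(\beta))$. But $[C_1], \ldots, [C_n]$ are linearly independent in $H^2(S, \mathbb{Q})$: any relation $\sum a_i [C_i] = 0$ forces, via intersection with each $[C_j]$, that $(a_1, \ldots, a_n)$ lies in the kernel of the $\widetilde{A}_{n-1}$ Cartan matrix, so $(a_i) = c\cdot(1,\ldots,1)$; then $c[F] = 0$ gives $c=0$ since $[F] \ne 0$ in $H^2(S,\mathbb{Q})$. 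Hence $[C_b] \notin V$, contradicting $c_1(\beta) \in V$, so $a = -1$. The main technical obstacle is carefully performing the local Ext computations at the two nodes and rigorously justifying the iterative descent via Claim \ref{claim:useful} through all positive $p$, especially handling potential wraparound summands of $\Sigma(\beta)$ where the restriction to $U_{b-1,b}$ may decompose into multiple local $\mathcal{R}_{k,j}$-type pieces (each controlled by Fact \ref{condition:(1)}).
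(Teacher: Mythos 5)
Your proof is correct and follows essentially the same route as the paper's, which simply defers to \cite[Claim 7.11]{IU05}: assume $a\ge 0$, use the spectral sequence \eqref{equation:spectral} together with Facts \ref{condition:(1)}--\ref{condition:(3)}, equation \eqref{equation:addCondition(3)} and Claim \ref{claim:useful} to exclude $C_b$ from $\Supp H^p(\beta)$ for every $p$, and then contradict Fact \ref{condition:(4)}. Your explicit check that $[C_1],\dots,[C_n]$ remain linearly independent in $H^2(S,\Q)$ (the one point where the negative semi-definite cycle case differs from the negative definite $A_n$ case of \cite{IU05}) is precisely the detail needed to adapt the cited argument, so the proposal matches the intended proof.
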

\begin{proof}
A similar proof to that of \cite[Claim 7.11]{IU05} works.
\end{proof}


The inequality $\dim E_2^{1, -1} \le 2$ from \eqref{equation:addCondition(3)}
also implies that 
$$
\Ext_S^1(\mathcal{R}_{k,j},\mc{O}_{C_b}(-1))=0
$$
for $k=1, 2$ and for all $j$. 
In particular, we get
$$
\deg_{C_b}\mc{R}_{1,j}=\deg_{C_b}\mc{R}_{2,j}=0.
$$

Recall that there is a unique sheaf $\mc{R}\in \Sigma(\beta)$ satisfying $C_{t(\mc{R})}=C_{b-1}$,
and $\mc{R}|_{U_{b-1,b}}$ contains $\mc{R}_3$ as a direct summand.
Now, we give a proof for Case (iii) by induction on $l(\mc{R})$.

First, suppose $l(\mc{R})=1$. In this case $\mc{R}=\mc{R}_3$, and 
we write
$$
\bigoplus_j^{r_2}\mathcal{R}_{2,j}=\bigoplus_j^{s_1}\mathcal{S}_{1,j}
\oplus\bigoplus_j^{s_2}\mathcal{S}_{2,j},
$$
where $\mathcal{S}_{k,j}$'s are sheaves in $\Sigma(\beta)_{U_{b-1,b}}$ of the following forms.
\[ 
\xymatrix@R=1ex@M=.3ex{ &               & C_{b-1} & C_b &  C_{b+1}& \\
          \mathcal{R}_{1,j}:& \ar@{-}[r]&{\zero}\ar@{-}[r]&{\zero}\ar@{-}[r]&{\no}\ar@{--}[r]& \\
          \mathcal{S}_{1,j}:&          &{\zero}\ar@{-}[r] &{\zero}\ar@{-}[r]&{\no}\ar@{--}[r]& \\
          \mathcal{S}_{2,j}:&          &{\mone}\ar@{-}[r]&{\zero}\ar@{-}[r]&{\no}\ar@{--}[r]& \\
          \mathcal{R}_3:&              &{\mone}         &                 &{\no}\ar@{--}[r]& &:\mathcal{R}_4& \\
          \alpha:           &          &                  &{\mone}          &                 &}
\]

Note that we may assume $r_1\ne 0$ since otherwise $\Supp \beta$ is strictly smaller than $Z_0$, 
in which case \cite[Claim 7.1]{IU05} gives the result.
Because of the existence of $\mc{R}_3$, 
we have $r_2\ne 0$ by \cite[Lemma 6.2]{IU05}, which continues to hold in our setting. Hence,
$s_1\ne s_2$ by Lemma \ref{lemma:r_4=0}.
Define
$$
\Psi _0= \begin{cases}
T_{\mc{O}_{C_{b-1}\cup C_b}(-1,-1)} & \text{ if } s_1<s_2, \\
T'_{\mc{O}_{C_{b-1}\cup C_b}} & \text{ if } s_2<s_1.
\end{cases}
$$
Then $(\Psi_0(\alpha), \Psi _0(\beta))$ fits in Case (ii) and
$\Psi_0(\beta)$ satisfies $l(\Psi_0 (\beta))\le l(\beta)$.
Since we have proved Case (ii), this finishes the case $l(\mc{R})=1$.

Next, suppose $l(\mc{R})>1$.
In this case, Lemma \ref{lem:degree_restriction} (iii-4) implies
$$
\deg_{C_{b-1}} \mc{R}_{2,j}=-1.
$$
Set
$
\Psi'= T_{\owe_{C_b}(-1)} \circ T_{\owe_{C_{b-1}}(-2)}.
$
Then we have 
$$
\Psi'(\alpha) \cong \owe_{C_{b-1}}(-2) \mbox{ and } l(\Psi'(\beta)) \le l(\beta).
$$
%
\[ 
\xymatrix@R=1ex@M=.3ex{ &     &  C_{b-2} & C_{b-1} & C_b &  C_{b+1}& &\\
          \Psi'(\mathcal{R}_{1,j}):& &\ar@{--}[l]{\no}\ar@{-}[r]&{\mone}\ar@{-}[r]&{\mone}\ar@{-}[r]&{\no}\ar@{--}[r]& &\\
          \Psi'(\mathcal{R}_{2,j}):& &         & &&{\no}\ar@{--}[r]& &\\
          \Psi'(\mathcal{R}_3):&          &\ar@{--}[l]{\no} &                &{\mone
          }\ar@{-}[r]  &{\no}\ar@{--}[r]& &:\Psi'(\mathcal{R}_4) \\
     \Psi'(\alpha):           & &         &                  {\cib}&          &                 &&}
\]
First, let us consider the case $C_{s(\mc{R})}=C_{b+1}$, equivalently $\mc{R}=\mc{R}'$. 
Take an element $\widetilde{\mathcal{R}_{2,j}}\in \Sigma (\beta)$ such that
$\widetilde{\mathcal{R}_{2,j}}|_{U_{b-1,b}}$ contains $\mathcal{R}_{2,j}$ as a direct summand.
Then $\Psi'(\widetilde{\mathcal{R}_{2,j}})$ satisfies 
$$
t(\Psi'(\widetilde{\mathcal{R}_{2,j}}))-n+1<b-1<s(\Psi'(\widetilde{\mathcal{R}_{2,j}}))-1 
$$
 or 
$$
t(\Psi'(\widetilde{\mathcal{R}_{2,j}}))+1<b-1.
$$
Hence, we can find $\Psi$ as in Claim \ref{cla:l(beta),l(alpha)} 
by Remark \ref{rem:nonempty}.
Therefore, we may assume that $C_{s(\mc{R})}\ne C_{b+1}$.
Note that $\Psi'(H^q(\beta))$ is a sheaf for every $q\in \Z$.
Using the spectral sequence
$$
 E_2^{p,q}= H^p(\Psi'(H^q(\beta))) \Longrightarrow E^{p+q}=H^{p+q}(\Psi'(\beta)),
$$ 
we see $\Psi'(H^q(\beta))=H^q(\Psi'(\beta))$, and 
then $(\Psi'(\alpha), \Psi '(\beta))$ fits in Case (iii).  Furthermore, the assumption 
 $C_{s(\mc{R})}\ne C_{b+1}$ implies that $l(\Psi'(\mc{R}))<l(\mc{R})$.
Hence, we can conclude by induction on  $l(\mc{R})$.

\qed\\


\section{Example}\label{sec:examples}
If $\lambda_S\le 4$, we have $H_S=\{\id_S\}$ and $\Image \Theta=\SL(2,\Z)$ as in Remark \ref{rem:image_Theta} (ii).
For a general elliptic surface $S$, however, it is not easy to describe the group $H_S$ concretely. 
In this section, 
we give an example of elliptic surfaces with $\lambda_S\ge 5$ where
we are able to determine $H_S$, and hence
$\Image \Theta$, more concretely. 

\if0
\subsection{Autoequivalences of elliptic ruled surfaces}

Let $S:=\PP(\mc{O}_E\oplus \mc{L})$ be an elliptic ruled surface 
with non-trivial Fourier--Mukai partners, where
$E$ is an elliptic curve, and $\mc{L}$ is in ${}_mE$ for some $m>2$, as in \cite{TU14}. 
We freely use the results in \cite{TU14} below.
Then the group $H_S$ coincides with the group
$$
H_{\hat{E}}^\mc{L}:=\{ k\in(\Z/m\Z)^*\mid \text{ $\exists\psi_1\in \Aut_0 E$ such that $\psi_1^*\mc{L}=\mc{L}^k$} \}.
$$
Note also that 
$\Auteq D(S)=\Auteq ^\dagger D(U)$,
since $\pi$ has no reducible fibers.

Therefore, by Theorem \ref{thm:AeV_extension},
we have the following short exact sequence:
\begin{align*}
1\to 
\Span{\otimes \mathcal{O}_S(D)\mid  D\cdot F=0}
&\rtimes \Aut S \times \Z[2] \to
\Auteq   D(S) \notag\\
&\to
\bigl\{ \begin{pmatrix}
c& a\\
d& b   
\end{pmatrix}\in \Gamma_0(m) \bigm| b\in H_{\hat{E}}^{\mc{L}}                       \bigr\}
\to 1.
\end{align*}

\subsection{Autoequivalences of certain rational elliptic surfaces} 
\fi

Let us consider a rational elliptic surface $\pi\colon J\to \PP^1$ with a section, and assume that $\pi$ has four singular fibers of 
types $\mathrm{I}_7$, $\mathrm{I}_2$, $\mathrm{II}$ and
$\mathrm{I}_1$. Such a surface exists by Persson's list \cite{Pe90}.
Take a point $s\in \PP^1$ over which the fiber of $\pi$
is not of type $\mathrm{II}$.
Apply a logarithmic transformation along the point $s$ to obtain a rational elliptic surface $S$ whose Jacobian surface is $J$, 
and $S$ has a multiple fiber of the multiplicity $m$ over the point $s$. Suppose that 
 $m>2$. Then as in \cite[Example 2.6]{Ue11},
 we can show that $H_S=\{\pm 1\}$ (we leave it to the reader to check this). 
%
Therefore, Theorem \ref{thm:main} assures that there is a short exact sequence:
\begin{align*}
1\to \Span{B,\otimes \mathcal{O}_S(D)\mid D\cdot F=0}&\rtimes \Aut S\times  
\Z[2]
\to
\Auteq D(S) \notag\\
& \stackrel{\Theta}\to 
\bigl\{ \begin{pmatrix}
c& a\\
d& b   
\end{pmatrix}\in \Gamma_0(m) \bigm| b\equiv\pm 1\ (\module m) \bigr\}
\to 1.
\end{align*}
In this case, $\Aut S=\Aut_{\PP ^1}S$ is just the semi-direct product of 
the Mordell-Weil group of $S$ and the subgroup of automorphisms preserving the zero section (cf.~\cite[Theorem 1.3.14]{FM94}).
Hence, it can be calculated by using \cite{OS90}.

In the upcoming paper \cite{Ue}, we consider the autoequivalence group and Fourier--Muaki partners of elliptic ruled surfaces. 

\noindent
Department of Mathematics
and Information Sciences,
Tokyo Metropolitan University,
1-1 Minamiohsawa,
Hachioji-shi,
Tokyo,
192-0397,
Japan 

{\em e-mail address}\ : \  hokuto@tmu.ac.jp


\begin{thebibliography}{BM01}

\bibitem[At57]{At57}
M. Atiyah, Vector bundles over an elliptic curve, Proc. London Math., Soc. 5 (1955), 407--434.

\bibitem[BBDG]{BBDG}
L. Bodnarchuk, I. Burban,  Y. Drozd, G.-M. Greuel, Vector bundles and torsion free sheaves on degenerations of elliptic curves. Global aspects of complex geometry, 83--128, Springer, Berlin, 2006. 
 
\bibitem[BHPV]{BHPV} Barth, Wolf P.; Hulek, Klaus; Peters, Chris A. M.; Van de Ven, Antonius, Compact complex surfaces. Second edition. Ergebnisse der Mathematik und ihrer Grenzgebiete. 3. Folge. A Series of Modern Surveys in Mathematics [Results in Mathematics and Related Areas. 3rd Series. A Series of Modern Surveys in Mathematics], 4. Springer-Verlag, Berlin, 2004. xii+436 pp.

\bibitem[BO95]{BO95}
A.I. Bondal, D.O. Orlov, Semiorthogonal decomposition for algebraic varieties, alg-geom 9712029.



\bibitem[Br98]{Br98}
T. Bridgeland, Fourier--Mukai transforms for elliptic surfaces. J. Reine Angew. Math. 498 (1998), 
115--133. 

\bibitem[Br99]{Br99}
T. Bridgeland, Equivalences of triangulated categories and Fourier--Mukai transforms, 
Bull. London Math. Soc. 31 (1999), 25--34. 

\bibitem[BM98]{BM98}
T. Bridgeland, A. Maciocia, Fourier-Mukai transforms for quotient varieties. math.AG/9811101. 

\bibitem[BM01]{BM01}
T. Bridgeland, A. Maciocia,
Complex surfaces with equivalent derived categories. Math. Z. 236 (2001), 677--697. 

\bibitem[BM02]{BM02}
T. Bridgeland, A. Maciocia, Fourier-Mukai transforms for K3 and elliptic fibrations. J. Algebraic Geom. 11 (2002), no. 4, 629--657. 

\bibitem[BV03]{BV03}
A. Bondal, M. Van den Bergh,
Generators and representability of functors in commutative and noncommutative geometry. 
Mosc. Math. J.  3  (2003),  no. 1, 1--36, 258. 


\bibitem[Fr95]{Fr95}
R. Friedman, Vector bundles and SO(3)-invariants for elliptic surfaces, J. Amer. Math. Soc. 8 (1995), 29--139. 

\bibitem[FM94]{FM94}
R. Friedman, J. W. Morgan, Smooth Four-Manifolds and Complex Surfaces, Springer--Verlag Berlin 
Heidelberg 1994.

\bibitem[Ha66]{Ha66}
R. Hartshorne, 
Residues and duality. 
Lecture notes of a seminar on the work of A. Grothendieck, given at Harvard 1963/64. With an appendix by P. Deligne. Lecture Notes in Mathematics, No. 20 Springer-Verlag, Berlin-New York 1966 vii+423 pp. 

\bibitem[Ha77]{Ha77}
R. Hartshorne, Algebraic Geometry, Springer--Verlag, Berlin Heidelberg New York, 1977.

\bibitem[HLS09]{HLS09}
D. Hern\'andez Ruip\'erez, A.C. L\'opez Mart\'in, F. Sancho de Salas,
Relative integral functors for singular fibrations and singular partners. 
J. Eur. Math. Soc. (JEMS)  11  (2009),  no. 3, 597--625. 


\bibitem[Hu06]{Hu06}
D. Huybrechts, Fourier-Mukai transforms in algebraic geometry. Oxford Mathematical Monographs. The Clarendon Press, Oxford University Press, Oxford, 2006. viii+307 pp. 

\bibitem[IU05]{IU05}
A. Ishii, H. Uehara, Autoequivalences of derived categories on the minimal resolutions of $A_n$-singularities on surfaces. J. Differential Geom. 71 (2005), no. 3, 385--435. 

\bibitem[IUU10]{IUU10} 
A. Ishii, K. Ueda, H. Uehara, Stability conditions on $A_n$-singularities. J. Differential Geom. 84 (2010), no. 1, 87--126. 
  


  
\bibitem[Ka02]{Ka02}
Y. Kawamata, D-equivalence and K-equivalence. J. Differential Geom. 61 (2002), 147--171.

\bibitem[KO95]{KO95}
S. A. Kuleshov, D. Orlov, 
Exceptional sheaves on Del Pezzo surfaces. 
Russian Acad. Sci. Izv. Math. 44 (1995), no. 3, 479--513
 
\bibitem[LST13]{LST13}
A.C. L\'opez Mart\'in, D. S\'anchez G\'omez, C. Tejero Prieto, Relative Fourier--Mukai functors for Weierstra\ss 
\ fibrations, abelian schemes and fano fibrations, Math. Proc. Cambridge Philos. Soc. 155 (2013) 129--153. 
 
 
\bibitem[OS90]{OS90}
K. Oguiso, T. Shioda, The Mordell--Weil lattice of a rational elliptic surface, Comment. Math. Univ. St. Pauli 40 (1991) 83--99. 

\bibitem[Or97]{Or97}
D. Orlov, Equivalences of derived categories and K3 surfaces. Algebraic geometry, 7. J. Math. Sci. (New York) 84 (1997), 1361--1381.

\bibitem[Or02]{Or02}
D. Orlov, Derived categories of coherent sheaves on abelian varieties and equivalences between them, 
Izv. Math. 66 (2002) 569--594.

\bibitem[Pe90]{Pe90}
U. Persson, Configurations of Kodaira fibers on rational elliptic surfaces, Math. Z. 205 (1) (1990) 1--47.


\bibitem[ST01]{ST01}
P. Seidel,  R. Thomas, Braid group actions on derived categories of coherent sheaves, Duke Math. J. 108 (2001), 37--108. 

\bibitem[To03]{To03}
Y. Toda, 
Fourier-Mukai transforms and canonical divisors. 
Compos. Math. 142 (2006), no. 4, 962--982. 

\bibitem[Ue04]{Ue04}
H. Uehara, An example of Fourier-Mukai partners of minimal elliptic surfaces. Math. Res. Lett. 11 (2004), no. 2-3, 371--375.

\bibitem[Ue11]{Ue11}
H. Uehara, A counterexample of the birational Torelli problem via Fourier-Mukai transforms. J. Algebraic Geom. 21 (2012), no. 1, 77--96.

\bibitem[Ue]{Ue}
H. Uehara, Derived categories of elliptic ruled surfaces (preprint).
\end{thebibliography}
\end{document}